\newtheorem{theorem}{Theorem}[section]
\newtheorem{lemma}[theorem]{Lemma}
\newtheorem{corollary}[theorem]{Corollary}
\newtheorem{theirtheorem}{Theorem}
\newcommand{\Z}{\mathbb Z}
\newcommand{\R}{\mathbb R}
\DeclareMathOperator{\ord}{ord}
\DeclareMathOperator{\supp}{Supp}
\newcommand{\la}{\langle}
\newcommand{\ra}{\rangle}
\newcommand{\be}{\begin{equation}}
\newcommand{\ee}{\end{equation}}
\newcommand{\und}{\;\mbox{ and }\;}
\newcommand{\nn}{\nonumber}
\newcommand{\ber}{\begin{eqnarray}}
\newcommand{\eer}{\end{eqnarray}}
\newcommand{\Sum}[2]{\underset{#1}{\overset{#2}{\sum}}}
\newcommand{\Summ}[1]{\underset{#1}{\sum}}
\newcommand{\barr}[2]{\overline{{#1\;}^{#2}}}
\newcommand{\A}{\mathscr A}
\newcommand{\sP}{\mathscr S}
\newcommand{\Fc}{\mathcal F}
\newcommand{\vp}{\mathsf v}
\newcommand{\h}{\mathsf h}
\newcommand{\sA}{\mathscr{A}}
\DeclareSymbolFont{goo}{OMS}{cmsy}{b}{n}
\DeclareMathSymbol{\gooT}{\mathalpha}{goo}{"1}
\newcommand{\bdot}{\mathbin{\gooT}}
\begin{document}

\title{Iterated Sumsets and  Subsequence Sums}
\author{David J. Grynkiewicz}
\email{diambri@hotmail.com}
\address{Department of Mathematical Sciences, University of Memphis, Memphis,  TN 38152, USA}
\subjclass[2010]{11B75, 11P70}
\keywords{zero-sum, sumset, subsequence sum, subsum, Partition Theorem, DeVos-Goddyn-Mohar Theorem, Kneser's Theorem, Kemperman Structure Theorem, $n$-fold sumset, iterated sumset, Olson, complete sequence}

\begin{abstract}
Let $G\cong \Z/m_1\Z\times\ldots\times \Z/m_r\Z$ be a finite abelian group with $1<m_1\mid\ldots\mid m_r=\exp(G)$. The Kemperman Structure Theorem characterizes all subsets $A,\,B\subseteq G$  satisfying $|A+B|<|A|+|B|$ and has been  extended to cover the case when $|A+B|\leq |A|+|B|$. Utilizing these results, we provide a precise structural description of all finite subsets $A\subseteq G$ with  $|nA|\leq (|A|+1)n-3$ when $n\geq 3$ (also when $G$ is infinite), in which case many of the pathological possibilities from the case $n=2$ vanish, particularly for large $n\geq \exp(G)-1$. The structural description is combined with other arguments to generalize  a subsequence sum result of Olson
 asserting that a sequence $S$ of terms from $G$ having length $|S|\geq 2|G|-1$ must either have every element of $G$ representable as a sum of $|G|$-terms from $S$ or else have all but $|G/H|-2$ of its terms lying in a common $H$-coset for some $H\leq G$. We show that the much weaker hypothesis $|S|\geq |G|+\exp(G)$ suffices to obtain a nearly identical conclusion, where for the case $H$ is trivial we must allow all but $|G/H|-1$ terms of $S$ to be  from the same $H$-coset. The bound on $|S|$ is improved for several classes of groups $G$, yielding optimal lower bounds for $|S|$. We also generalize Olson's result for $|G|$-term subsums to an analogous one for $n$-term subsums when $n\geq \exp(G)$, with the bound likewise improved for several special classes of groups. This improves previous generalizations of Olson's result, with the bounds for  $n$ optimal.\end{abstract}

\maketitle

\section{Notation and Overview}

\subsection{Notation} Let $G$ be an abelian group and let $A,\,B\subseteq G$ be finite and nonempty subsets. Their sumset is defined as $A+B=\{a+b:\;a\in A,\,b\in B\}$. For $x\in G$, we let $\mathsf r_{A+B}(x)=|(x-B)\cap A|=|(x-A)\cap B)|$ denote the number of ways to represent $x=a+b$ as an element in the sumset $A+B$, where $(a,b)\in A\times B$. When $\mathsf r_{A+B}(x)=1$, we say that $x$ is a \emph{unique expression} element in $A+B$. Note $A+B=\{x\in G:\;\mathsf r_{A+B}(x)\geq 1\}$. Multiple summand sumsets are defined analogously: $\Sum{i=1}{n}A_i=\{\Sum{i=1}{n}a_i:\;a_i\in A_i\}$ for subsets $A_1,\ldots,A_n\subseteq G$. For an integer $n\geq 0$, we use the abbreviation $nA={\underbrace{A+\ldots+A}}_{n}$, where $0A:=\{0\}$, for the $n$-fold iterated sumset.

The \emph{stabilizer} of $A\subseteq G$ is the subgroup $\mathsf H(A)=\{x\in G:\; x+A=A\}\leq G$. It is the maximal subgroup $H$ such that $A$ is a union of $H$-cosets. When $\mathsf H(A)$ is trivial,  $A$ is called \emph{aperiodic}, and when $\mathsf H(A)$ is nontrivial,  $A$ is called \emph{periodic}. More generally, if $A$ is a union of $H$-cosets for some subgroup $H\leq G$ (necessarily with $H\leq \mathsf H(A)$), then $A$ is called \emph{$H$-periodic}. Observe that $A$ being $H$-periodic implies that $A+B$ is also $H$-periodic for any nonempty subset $B\subseteq G$. In particular, if $nA$ is aperiodic, then so is $kA$ for any $k\leq n$.

If $H\leq G$ is a subgroup, then we let $$\phi_H:G\rightarrow G/H$$ denote the natural homomorphism. Note, if $H=\mathsf H(A)$, then $\phi_H(A)$ is aperiodic. We use $H<G$ to indicate that $H$ is proper, and $$\la A\ra_*:=\la A-A\ra=\la -x+A\ra\quad\mbox{ for any $x\in A$}$$ denotes the subgroup generated affinely by $A$, which is the smallest subgroup $H$ such that $A$ is contained in an $H$-coset.
The relative complement of $A$ is defined as $$\barr{A}{H}:=(H+A)\setminus A.$$ When the subgroup $H$ is implicit, it will usually be dropped from the notation.

Regarding sequences and subsequence sums, we follow the standardized notation from Factorization Theory  \cite{gao-ger-survey} \cite{alfredbook} \cite{Gbook}.  The key parts are summarized  here. Let $G_0\subseteq G$ be a subset. A \emph{sequence} $S$ of terms from $G_0$ is viewed formally as an element of the free abelian monoid  with basis $G_0$, denoted  $\mathcal F(G_0)$. Thus a sequence $S\in \Fc(G_0)$ is written as a finite multiplicative string of terms, using the bold dot operation $\bdot$ to concatenate terms, and with the order irrelevant:
$$S=g_1\bdot\ldots\bdot g_\ell$$ with $g_i\in G_0$ the terms of  $S$ and $|S|:=\ell\geq 0$ the \emph{length} of $S$.
 Given $g\in G_0$ and $s\geq 0$,  we let $g^{[s]}={\underbrace{g\bdot\ldots\bdot g}}_s$ denote the sequence consisting of the element $g$ repeated $s$ times.  We let
$$\vp_g(S)=|\{i\in [1,\ell]:\;g_i=g\}|\geq 0$$
denote the multiplicity of the term $g\in G_0$ in the sequence $S$.
If $S,\,T\in \Fc(G_0)$ are sequences, then  $S\bdot T\in \Fc(G_0)$ is  the sequence obtained by concatenating the terms of $T$ after those of $S$.
  A sequence $S$ may also be defined by listing its terms as a product: $S=\prod^\bullet_{g\in G_0}g^{[\vp_g(S)]}.$
 We use $T\mid S$ to indicate that $T$ is a subsequence of $S$ and let ${T}^{[-1]}\bdot S$ or $S\bdot {T}^{[-1]}$ denote the sequence obtained by  removing the terms of $T$ from $S$.
Then
\begin{align*}
 &\h (S) =     \max \{ \mathsf v_g(S):\; g \in G_0 \} \quad  \mbox{is the \emph{maximum  multiplicity} of $S$},\\
 &\supp(S)=\{g\in G_0:\; \vp_g(S)>0\}\subseteq G \quad\mbox{ is the \emph{support} of $S$},\\
 &\sigma(S)=\Sum{i=1}{\ell}g_i=\Summ{g\in G_0}\vp_g(S)g\in G\quad\mbox{ is the \emph{sum} of $S$},\\
 &\Sigma_n(S)=\{\sigma(T):\; T\mid S, \, |T|=n\}\subseteq G \quad \mbox{ are the $n$-term \emph{sub(sequence)-sums} of $S$}.
\end{align*}
Given a map
$\varphi \colon G_0 \to G'_0$, we let $\varphi(S)=\varphi(g_1)\bdot\ldots\bdot \varphi(g_\ell)\in \Fc(G'_0)$. The sequence $S$ is called \emph{zero-sum} if $\sigma(S)=0$. A \emph{setpartition} $\sA=A_1\bdot\ldots\bdot A_n$ over $G_0$ is a sequence of \emph{finite}, \emph{nonempty} subsets $A_i\subseteq G_0$. A setpartition naturally partitions its underlying sequence $$\mathsf{S}(\mathscr A):={\prod}^\bullet_{i\in [1,n]}{\prod}^\bullet_{g\in A_i}g\in \Fc(G_0)$$ into $n$ sets, so $\mathsf S(\sA)$ is the sequence obtained by concatenating the elements from every  $A_i$. We let $\sP(G_0)$ denote the set of all setpartitions over $G_0$, and refer to a setpartition of length $|\A|=n$ as an \emph{$n$-setpartition}.

Intervals are discrete, so $[a,b]=\{x\in \Z:\; a\leq x\leq b\}$ for $a,\,b\in \R$, as are variables introduced with inequalities. For $m\geq 1$, we let $C_m\cong \Z/m\Z$ denote a cyclic group of order $m$. If $G$ is finite, then $G\cong C_{m_1}\times\ldots\times C_{m_r}$ for some $m_1\mid \ldots\mid m_r$ with $m_r=\exp(G)$ the \emph{exponent} of $G$.  For $G$ cyclic,  an affine transformation is a map $\varphi:G\rightarrow G$ of the form $\varphi(x)=sx+y$ for $x\in G$, where $y\in G$, $s\in \Z$ and $\gcd(s,|G|)=1$. The \emph{Davenport Constant}, denoted $\mathsf D(G)$, is the least integer such  that a sequence of terms from $G$ with length $|S|\geq \mathsf D(G)$ must always contain a nontrivial zero-sum subsequence. In general, $\mathsf D^*(G)\leq \mathsf D(G)\leq |G|$, where $\mathsf D^*(G):=1+\Sum{i=1}{r}(m_i-1)$, though both inequalities may be strict (see \cite[Propositions 5.1.4 and 5.1.8, pp. 341]{alfredbook}, or \cite{wolfgang-ordaz-olson-constant} for related results regarding the strong Davenport constant).

\subsection{Overview} Inverse structure theorems for sumsets, describing the structure of the summands $A$ and $B$ when $|A+B|$ is small in comparison to the size of $|A|$ and $|B|$, are among the most fundamental questions in Additive Combinatorics. The texts \cite{Alfred-Ruzsa-book} \cite{alfredbook} \cite{Gbook} \cite{natboook} \cite{taobook} provide some overview.  While there are many such results approximating the structure of $A$ and $B$, particularly in special groups, there are very few that fully characterize the possibilities, especially for an unrestricted  abelian group $G$. One such result is due to Kemperman \cite[Chapter 9]{Gbook}  \cite{ham-kst} \cite{kemp}, who  gave a full characterization of when $|A+B|< |A|+|B|$. This was later extended to a  characterization of when $|A+B|\leq |A|+|B|$ in \cite{kst+1}, generalizing partial work achieved in \cite{ham-pre-kst+1}. They include some unwieldy possibilities, particularly when $|A+B|$ is large in comparison to $|G|$, leading us to defer the relevant details until  Section \ref{sec-CriticalPair-Theory}. Our first goal in this paper is to extend the symmetric case in these results to  $n$-fold iterated sumsets, giving the following precise characterization applicable when $|nA|<n|A+H|+(n-3)|H|,$ where  $H=\mathsf H(nA)$, by applying it to $n\phi_H(A)$. The definitions used to describe the possible structures in Theorem \ref{thm-main-sumset} are explained in detail in Section \ref{sec-CriticalPair-Theory}.

\begin{theorem}\label{thm-main-sumset}
Let $G$ be a nontrivial  abelian group,  let $A\subseteq G$ be a finite subset with $\la A\ra_*=G$, and let $n\geq 3$ be an integer. Suppose $nA$ is aperiodic and $$|nA|<(|A|+1)n-3.$$ If $|A|=3$, then $A$ is given by one of the possibilities listed in Lemma \ref{lemma-itt-kst-size3}. Otherwise, one of the following must hold.
\begin{itemize}
\item[(i)]   There is an arithmetic progression $P\subseteq G$ such that $A\subseteq P$ and $|P|\leq |A|+1$, in which case   $|nA|=(|A|-1)n+1$, $|nA|=|A|n$, $|nA|=|A|n+1$ or $|nA|=|A|n-1=|G|-1$.

\item[(ii)] There are subgroups $K_1,\,K_2,\,H< G$ with $H=K_1\oplus K_2\cong C_2\oplus C_2$ such that $$z+A=\Big(x+K_1\Big)\cup \Big (y+H\Big)\cup\ldots\cup \Big((r-1)y+H\Big)\cup \Big (ry+K_2\Big)\quad\mbox{ with } r\geq 1,$$ for some $z\in G$, $x\in H$ and $y\in G\setminus H$, in which case $|nA|=|A|n$ or $|nA|=|A|n-1=|G|-1$.

\item[(iii)] There is a subgroup $H<G$ with $|H|=2$ such that $$z+A=\{x\}\cup \Big(y+H\Big)\cup \ldots\cup \Big(ry+H\Big)\cup \{(r+1)y\}\quad\mbox{ with $r\geq 1$},$$ for some $z\in G$, $x\in H$ and $y\in G\setminus H$,  in which case $|nA|=|A|n$ or $|nA|=|A|n-1=|G|-1$.

\item[(iv)] There is a nontrivial subgroup $H<G$  such that $$z+A=\{0\}\cup \Big(y+(H\setminus \{x\})\Big)\cup \Big(2y+H\Big)\cup \ldots\cup \Big(ry+H\Big)\quad\mbox{ with $r\geq 1$},$$ for some $z\in G$, $x\in H$ and  $y\in G\setminus H$, with $r\geq 2$ when $|H|=2$, in which case $|nA|=|A|n$ or $|nA|=|A|n-1=|G|-1$.

\item[(v)] There is a nontrivial subgroup $H<G$, nonempty $A_0\subseteq H$ and set $$P=A_0\cup (y+H)\cup \ldots\cup (ry+H)\quad\mbox{ with $r\geq 1$},$$ for some $y\in G\setminus H$, such that
    \begin{itemize}
  \item[(a)]  $A_0\subseteq z+A\subseteq P$ with $|P|=|A|+\epsilon\leq |A|+1$, for some $z\in G$,
   \item[(b)] $nA_0$ is aperiodic,
   \item[(c)] either $|A_0|=1$ or $|nA_0|<\min\{|\la A_0\ra_*|,\, (|A_0|+1-\epsilon)n-3\}$,
   \item[(d)]       $nA\setminus nA_0$ is $H$-periodic, and
   \item[(e)] $|nA|-|A|n=|nA_0|-|A_0|n+\epsilon n$.
\end{itemize}
\end{itemize}
\end{theorem}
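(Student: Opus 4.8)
The plan is to prove Theorem~\ref{thm-main-sumset} by induction on $n\geq 3$, with the base case $n=3$ and the inductive step handled by quite different mechanisms. The overarching idea is to compare $nA$ with $(n-1)A$ via the trivial inclusion $nA=(n-1)A+A$, and to bootstrap from the two-summand structure theory (the extended Kemperman Structure Theorem for $|A+B|\leq|A|+|B|$, recalled in Section~\ref{sec-CriticalPair-Theory}) applied to the pair $(n-1)A$ and $A$. Write $H_n=\mathsf H(nA)$; the hypothesis $nA$ aperiodic and $\la A\ra_*=G$ means we are in the genuinely aperiodic, affinely-generating situation. The first reduction is to observe that, since $nA$ aperiodic forces $kA$ aperiodic for all $k\leq n$, and since $(n-1)|A+H|+(n-4)|H|$-type bounds propagate downward, the inductive hypothesis applies to $(n-1)A$ as soon as $|nA|<(|A|+1)n-3$; one checks $|(n-1)A|\leq|nA|-|A|\leq(|A|+1)(n-1)-3-$ (something nonnegative, using $|A|\geq 3$ and a Kneser/Cauchy--Davenport-type lower bound $|nA|\geq(n-1)|A|+1$ when $nA$ is aperiodic and $A$ affinely generates), so $(n-1)A$ falls into one of cases (i)--(v) or the $|A|=3$ list.

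\textbf{The two-summand input.} With $B:=(n-1)A$ and the known structure of $B$, I would apply the Kemperman-type characterization to $A+B=nA$. Because $|nA|<(|A|+1)n-3$ is small relative to $|B|$, one gets $|A+B|<|A|+|B|$ or is within the $+1$ extension, so $A+B$ has a Kemperman-type quasiperiodic decomposition. The crucial observation is that the ``$A$-side'' summand in this decomposition is $A$ itself, which is small and rigid: matching the structure forced on $A$ by the $(A,B)$-critical-pair description against the structure inherited by $B=(n-1)A$ from the inductive hypothesis will pin $A$ down to one of the listed forms (i)--(v). Here the arithmetic-progression case (i) is self-propagating (if $A\subseteq P$ with $|P|\leq|A|+1$ then $nA$ sits in an AP of controlled length, and the exact value of $|nA|$ is read off from $|P|$ and boundary effects), and the periodic cases (ii)--(v) are stable under adding one more copy of $A$ precisely because $nA\setminus nA_0$ being $H$-periodic is preserved and $nA_0$ is governed by the lower-dimensional instance of the theorem inside $\la A_0\ra_*\lneq G$ — this is where clause (c)'s recursive bound $|nA_0|<\min\{|\la A_0\ra_*|,(|A_0|+1-\epsilon)n-3\}$ comes from, and why the recursion terminates.

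\textbf{The base case and the main obstacle.} For $n=3$ one cannot induct, so one argues directly: apply the two-summand structure theorem to $A+2A$ and separately to $A+A$, and reconcile. The case $|A|=3$ is genuinely special (too few elements for the asymptotic regularity to kick in) and is quarantined into Lemma~\ref{lemma-itt-kst-size3}, proved by a finite if intricate case analysis on the $3$-element subsets of various small groups. I expect the \emph{main obstacle} to be the bookkeeping in case (v): one must produce the subgroup $H$, the progression $P$, the core set $A_0$ and the offset $\epsilon\in\{0,1\}$, verify simultaneously all five sub-conditions (a)--(e), and — most delicately — show that the recursively-obtained structure of $nA_0$ inside $\la A_0\ra_*$ is compatible with $nA$ being aperiodic in $G$ (clause (b)) while $nA\setminus nA_0$ remains exactly $H$-periodic and the size identity (e) holds on the nose. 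Controlling how unique-expression elements of the Kemperman decomposition of $(n-1)A+A$ interact across the $H$-coset boundary, so that no unexpected periodicity is created and the count in (e) is exact rather than merely an inequality, is the technical heart; the remaining cases (ii)--(iv) are lower-complexity variants of the same analysis where $H$ is forced to be $C_2\oplus C_2$ or $C_2$ and the progression degenerates.
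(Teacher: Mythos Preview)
Your inductive scheme has a genuine gap at the very first step. You assert that from $|nA|<(|A|+1)n-3$ one can deduce $|(n-1)A|\leq|nA|-|A|$ and hence the inductive hypothesis $|(n-1)A|<(|A|+1)(n-1)-3$; but Kneser (with $nA$ aperiodic) only gives $|(n-1)A|\leq|nA|-|A|+1$, which yields $|(n-1)A|<(|A|+1)(n-1)-1$, two short of what you need. Likewise your claim that the extended Kemperman theorem applies to the pair $((n-1)A,A)$ requires $|nA|\leq|(n-1)A|+|A|$, which is nowhere given. These two failures are complementary---when one fails the other holds---so a case split is in principle available, but even then applying Kemperman at the \emph{last} step $k=n$ is fatal: the paper's argument rules out elementary types (IV), (VII) and the near-full case $|kA|\geq|G|-3$ by observing that $(k+1)A=G$ and hence $nA=G$, which needs $k<n$. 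With $k=n$ you have no such leverage, and those cases (where $|nA|\in\{|G|-3,|G|-2,|G|-1\}$) are genuinely possible under the hypotheses and must be absorbed into the conclusions rather than discarded.

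The paper does not induct on $n$ at all. Instead it uses a one-line pigeonhole: if $|kA|\geq|(k-1)A|+|A|+1$ held for every $k\in[2,n-1]$, iterating would give $|(n-1)A|\geq(n-1)|A|+(n-2)$ and then $|nA|\geq|(n-1)A|+|A|-1\geq(|A|+1)n-3$, contrary to hypothesis. So there exists some $k\in[2,n-1]$ with $|kA|\leq|(k-1)A|+|A|$, and Theorem~\ref{KST-lemma} is applied to \emph{that} pair $((k-1)A,A)$. Because $k<n$, any outcome forcing $kA$ periodic or $|kA|\geq|G|-3$ propagates forward to contradict $nA$ aperiodic or $nA\neq G$; the remaining Kemperman outcomes read off directly as structure on $A$ (types (II), (VIII), \ref{KST-lemma}(iv)--(v) give (i)--(iv), and \ref{KST-lemma}(vi) gives (v)), with no recursion or bookkeeping across different values of $n$. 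Note also that the theorem's conclusion is structure on $A$, not on $(n-1)A$: even where your induction would apply, you would obtain (i)--(v) for $A$ with $n-1$ in place of $n$, and you would still owe the verification of the $n$-dependent clauses (v)(b)--(e), which the paper gets for free from the single application of Kemperman.
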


When $G$ is finite and $n$ is large, many of the possibilities given in Theorem \ref{thm-main-sumset} are no longer possible. Requiring  $|nA|<|A|n$ or knowing that $|A|$ is large can also further simplify the list of structures. We give two such corollaries (Corollaries \ref{cor1} and \ref{cor2}) at the end of Section \ref{sec-itterated-sumsets}, though many more would be possible, varying according to the specific limitations imposed on  $n$, $A$ and $G$.

Our second goal is to utilize the structural characterization given in Theorem \ref{thm-main-sumset} to help improve some classical results regarding $n$-term subsequence sums and zero-sums.
One inception for the study of subsequence sums is the Erd\H{o}s-Ginzburg-Ziv Theorem \cite{egz} \cite[Corollary 4.2.8]{Alfred-Ruzsa-book} \cite[Corollary 5.7.5]{alfredbook} \cite[Theorem 10.1]{Gbook}.

\begin{theirtheorem}[Erd\H{o}s-Ginzburg-Ziv Theorem] Let $G$ be a finite abelian group and let $S\in \Fc(G)$ be a sequence of terms from $G$ of length $|S|\geq 2|G|-1$. Then $0\in \Sigma_{|G|}(S)$.
\end{theirtheorem}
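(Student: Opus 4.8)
The plan is the classical two-stage reduction. First I would settle the case where $G$ is cyclic of prime order by a direct sumset argument, and then bootstrap to an arbitrary finite abelian group $G$ by induction on $|G|$, passing to a quotient $G/H$ with $|G/H|$ prime.

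\textit{The prime case $G=C_p$.} Write $S=g_1\bdot\ldots\bdot g_{2p-1}$ and, identifying $C_p$ with $\Z/p\Z$, order the terms so that $0\le g_1\le\ldots\le g_{2p-1}\le p-1$ as integers. If $g_i=g_{i+p-1}$ for some $i\in[1,p-1]$, then $g_i=g_{i+1}=\ldots=g_{i+p-1}$, and this block of $p$ equal terms has sum $pg_i=0$, so $0\in\Sigma_{|G|}(S)$. Otherwise $g_i<g_{i+p-1}$ for all $i\in[1,p-1]$, so each two-element set $B_i:=\{g_i,\,g_{i+p-1}\}\subseteq C_p$ satisfies $|B_i|=2$; by the Cauchy--Davenport theorem (a special case of Kneser's theorem),
$$|B_1+\ldots+B_{p-1}|\ge\min\Big\{p,\;1+\sum_{i=1}^{p-1}(|B_i|-1)\Big\}=p,$$
so $B_1+\ldots+B_{p-1}=C_p$. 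Choosing $b_i\in B_i$ with $\sum_{i=1}^{p-1}b_i=-g_{2p-1}$, and noting that the index pairs $\{i,\,i+p-1\}$ for $i\in[1,p-1]$ are pairwise disjoint and avoid the index $2p-1$, the corresponding terms together with $g_{2p-1}$ form a subsequence $T\mid S$ with $|T|=p$ and $\sigma(T)=0$.

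\textit{The inductive step.} Suppose $|G|$ is composite and the statement holds for all smaller groups (the cases $|G|=1$ and $|G|$ prime being the base, the latter just established). Fix a prime $p\mid|G|$; since $p\mid\exp(G)$ there is a surjection $G\to C_p$, whose kernel $H$ satisfies $1<|H|=|G|/p<|G|$. I would then extract from $S$ pairwise disjoint subsequences $T_1,T_2,\ldots$ with $|T_j|=p$ and $\sigma(T_j)\in H$: whenever the part of $S$ not yet used has length $\ge 2p-1$, its image under $\phi_H$ has length $\ge 2|C_p|-1$, so the prime case produces such a $T_j$. Since $|S|=2|G|-1$ and each $T_j$ uses $p$ terms, the extraction can be continued for all $k\le 2|H|-2$, yielding exactly $2|H|-1$ blocks $T_1,\ldots,T_{2|H|-1}$. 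Their sums $w_j:=\sigma(T_j)$ all lie in $H$, so $W:=w_1\bdot\ldots\bdot w_{2|H|-1}\in\Fc(H)$ has length $2|H|-1$; the induction hypothesis applied to $H$ furnishes $J\subseteq[1,2|H|-1]$ with $|J|=|H|$ and $\sum_{j\in J}w_j=0$. Then $T:={\prod}^\bullet_{j\in J}T_j$ is a subsequence of $S$ of length $|H|\cdot p=|G|$ with $\sigma(T)=0$, i.e.\ $0\in\Sigma_{|G|}(S)$.

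The only piece with genuine content is the prime case, which requires a real additive-combinatorial input — Cauchy--Davenport, or equivalently a Chevalley--Warning count on the polynomials $\sum_i g_ix_i^{p-1}$ and $\sum_i x_i^{p-1}$ over $\F_p$; everything else is the bookkeeping of the block extraction, where the one point that needs attention is that the number of extractable length-$p$ blocks comes out to be exactly the $2|H|-1$ needed to feed the induction on $H$.
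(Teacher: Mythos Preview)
Your proof is the classical two--stage argument (Cauchy--Davenport for the prime case, then block extraction and induction through a quotient of prime order), and it is correct; the one cosmetic slip is writing ``$|S|=2|G|-1$'' in the inductive step when the hypothesis only gives $|S|\ge 2|G|-1$, but this is harmless since one may pass to a subsequence of length exactly $2|G|-1$.

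There is nothing to compare against in the paper: the Erd\H{o}s--Ginzburg--Ziv Theorem is merely \emph{quoted} there as a known result (with references to the original paper and to standard texts) and is not proved. It serves as motivation and background for the paper's own results on $n$-term subsequence sums, not as something the paper establishes.
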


If one is interested in knowing whether an element $g\in G$ other than $0$ can be represented as a subsum, there is a natural obstruction: $S$ could consist of a single element repeated with high multiplicity or, more generally, most of the terms of $S$ could lie in a coset of a proper subgroup. Olson \cite{olson1-pregao}, generalizing previous work of Mann \cite{Mann-preolson}, showed this  to be the only  barrier. We refer to the hypothesis in Theorem \ref{thm-olson} that, for every $H<G$ and $\alpha\in G$, there are at least $|G/H|-1$ terms of $S$ lying outside the coset $\alpha+H$, as the \emph{coset condition}.

\begin{theirtheorem}\label{thm-olson}\cite{olson1-pregao}
Let $G$ be a finite abelian group and let $S\in \Fc(G)$ be a sequence of terms from $G$ of length $|S|\geq 2|G|-1$. Suppose, for every $H<G$ and $\alpha\in G$, there are at least $|G/H|-1$ terms of $S$ lying outside the coset $\alpha+H$. Then  $\Sigma_{|G|}(S)=G$.
\end{theirtheorem}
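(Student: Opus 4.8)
The plan is to recast the statement in terms of setpartitions and then feed it into the DeVos--Goddyn--Mohar theorem. The elementary observation is that if $\sA=A_1\bdot\ldots\bdot A_n$ is an $n$-setpartition over $G$ with $n=|G|$ and $\mathsf S(\sA)\mid S$, then picking one element $a_i\in A_i$ from each part yields a length-$|G|$ subsequence of $S$ of sum $a_1+\ldots+a_n$, so that
$$\sum_{i=1}^{n}A_i\ \subseteq\ \Sigma_{|G|}(S);$$
it therefore suffices to produce a single $|G|$-setpartition $\sA$ with $\mathsf S(\sA)\mid S$ and $\sum_{i=1}^{|G|}A_i=G$. (A sequence $T$ is the underlying sequence of an $n$-setpartition exactly when $\h(T)\le n\le|T|$, and the coset condition taken with the trivial subgroup forces $\h(S)\le|S|-|G|+1$, so such setpartitions exist in abundance inside $S$.)

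I would proceed by induction on $|G|$, the case $|G|=1$ being trivial; write $n=|G|$. The crux is to extract $\sA$ from $S$ in a \emph{balanced} way --- so that, relative to the cosets of every proper subgroup, the $n$ parts are distributed as evenly as the coset condition permits --- and then to apply the DeVos--Goddyn--Mohar theorem, which bounds $\bigl|\sum_{i=1}^{n}A_i\bigr|$ from below in terms of the part sizes, the way the parts meet the cosets of $H:=\mathsf H(\sum_{i=1}^n A_i)$, and $|H|$. If $H=G$ then $\sum A_i=G$ and we are done; otherwise, because $\sA$ was chosen balanced and the coset condition prevents any coset of a proper subgroup from being overloaded, this lower bound works out to at least $|H|\cdot|G/H|=|G|$, forcing $\sum A_i=G$. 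This settles every $S$ from which a balanced $|G|$-setpartition can actually be extracted.

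The remaining possibility is that the balancing is obstructed; a short argument should show that this forces $S$ to be \emph{concentrated}, and --- after enlarging the offending subgroup to one of prime index --- we may assume there is a proper nontrivial $H<G$ and a coset $\alpha+H$ containing more than $|G|$ of the terms of $S$. Write $S_1$ for the subsequence of terms of $S$ in $\alpha+H$ and $S_2$ for the rest, so $|S_1|>|G|$ while $|S_2|\ge|G/H|-1$ by the coset condition. Then $\phi_H(S)$ satisfies Olson's hypotheses in $G/H$: its length is $|S|\ge 2|G|-1>2|G/H|-1$, and for $H\le K<G$ the terms of $\phi_H(S)$ outside a coset of $K/H$ are precisely the terms of $S$ outside the corresponding coset of $K$, of which there are at least $|G/K|-1=|(G/H)/(K/H)|-1$. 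By the inductive hypothesis $\Sigma_{|G/H|}(\phi_H(S))=G/H$, so for the target $g\in G$ we may peel off a $|G/H|$-term subsequence $T_0\mid S$ with $\sigma(T_0)\equiv g\pmod H$. The surplus of terms inside $\alpha+H$ --- more than $|G|-|G/H|$ of them remain after removing $T_0$ --- is then translated into $H$ and, using the inductive hypothesis inside $H$ together with the abundance of short zero-sum subsequences over $H$ (every sequence over $H$ of length $\ge\mathsf D(H)$ having a nontrivial zero-sum subsequence), assembled into a subsum of the exact length $|G|-|G/H|$ that corrects the total from $\sigma(T_0)$ to $g$. Concatenating $T_0$ with this subsum shows $g\in\Sigma_{|G|}(S)$.

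The main obstacle is this concentrated case: one must (i) prove that failure to extract a balanced setpartition really does force concentration in a prime-index coset --- a point needing care, since cosets of subgroups of an abelian group fail the Helly property, so the obstruction cannot be exposed by deleting one over-represented term at a time; (ii) split the $|G|=|H|\cdot|G/H|$ setpartition slots and the terms of $S$ between the ``mod $H$'' and ``inside $\alpha+H$'' contributions so that both sub-instances exactly meet their length requirements and inherit the coset condition after translation; and (iii) upgrade the conclusion $\Sigma_{|H|}(\,\cdot\,)=H$ provided by induction to a subsum of the precise prescribed length --- which may exceed $|H|$ --- inside $\alpha+H$, absorbing the excess terms into zero-sum blocks over $H$; step (iii) may well require inducting on a more flexible statement about $n$-term subsums rather than on Olson's theorem verbatim. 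A lesser, more technical burden is that the DeVos--Goddyn--Mohar bound is itself somewhat intricate, so checking in the balanced case that it attains $|G|$ requires the balanced setpartition to be built with the eventual stabilizer $H$ in mind.
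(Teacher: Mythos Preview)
The paper does not give a standalone proof of Theorem~B; it is cited as Olson's 1977 result. The paper does recover it as the special case $n=|G|$ of Theorem~\ref{thm-main-nsums}, whose proof runs via the Partition Theorem (Theorem~\ref{thm-partition-thm}) together with the structural characterization of iterated sumsets with small growth (Corollary~\ref{cor2}, resting on Theorem~\ref{thm-main-sumset}). That route is quite different from yours: rather than inducting on $|G|$ and splitting into a ``balanced'' versus ``concentrated'' dichotomy, the paper applies the Partition Theorem once to realize $\Sigma_n(S)$ as a sumset $\sum_{i=1}^n A_i$, uses Lemma~\ref{lem-partition-extra} to force $\la X\ra_*=G/H$ (where $H=\mathsf H(\Sigma_n(S))$ and $X$ is the set of high-multiplicity $H$-cosets), and then invokes Corollary~\ref{cor2} on $nX$ to conclude $nX=G/H$, giving $\Sigma_n(S)=G$.

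Your proposal is a plan with explicitly acknowledged gaps, not a proof. The central undefined object is the ``balanced'' setpartition: you never say what balancing means, and the DGM lower bound depends on the stabilizer $H=\mathsf H(\sum_i A_i)$, which cannot be known before $\sA$ is built --- so ``building $\sA$ with the eventual $H$ in mind'' is circular. Your step~(i), that failure to balance forces concentration in a coset of prime index, is asserted without argument; you correctly flag the Helly obstruction but offer no substitute. Step~(iii) is the serious gap: after peeling off $T_0$ of length $|G/H|$, you need a subsum of exact length $|G|-|G/H|$ inside $\alpha+H$ hitting a prescribed residue, but induction only supplies $\Sigma_{|H|}(\cdot)=H$, and to pad out the remaining $|G|-|G/H|-|H|$ slots with zero-sum blocks over $H$ you would need both sufficient length left over \emph{and} that the translated sequence inside $\alpha+H$ itself satisfies the coset condition in $H$ --- neither of which you verify. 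As you yourself suspect, making this work requires inducting on a more flexible $n$-term statement; that is precisely the content of the paper's Theorem~\ref{thm-main-nsums}.
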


There are several natural approaches to generalizing the above result of Olson. First, one could ask whether the bound $|S|\geq 2|G|-1$ is tight. Second, one could attempt to replace $\Sigma_{|G|}(S)$ with $\Sigma_n(S)$ for a more general integer $n\geq 1$. Third, one could ask whether the bound $|G/H|-1$ is tight. Towards this end, there are results addressing the first two approaches.

For instance, Gao \cite{Gao-preolson} showed that the hypothesis $|S|\geq 2|G|-1$ in Theorem \ref{thm-olson} could be replaced by $|S|\geq |G|+\mathsf D(G)-1$, and the theorem remained true.  A basic argument \cite[Theorem 10.2]{Gbook} shows $\mathsf D(G)\leq |G|$, while  $\mathsf D(G)$ is generally much smaller than $|G|$ (see \cite[Theorem 5.5.5]{alfredbook} \cite[Theorem 17.1]{Gbook}). Thus the result of Gao provided a strong generalization of Olson's result. However, the bound $|S|\geq |G|+\mathsf D(G)-1$ is not optimal. It was later shown in \cite{oscar-weighted} that the hypothesis  $|S|\geq |G|+\mathsf D(G)-1$ could be replaced by $|S|\geq |G|+\mathsf D^*(G)-1$. Since $\mathsf D^*(G)\leq \mathsf D(G)$ is the basic lower bound for the Davenport constant, known to be strictly tight in many instances (see \cite[pp. 341]{alfredbook}), this was an improvement on the bound given by Gao. It naturally raises the question, what is the minimal integer $n_G$ such that $|S|\geq |G|+n_G$ implies $\Sigma_{|G|}(S)=G$, assuming  the coset condition given in Theorem \ref{thm-olson} holds? Note the coset condition failing for a coset of the subgroup  $K<G$ implies that $|S|\leq \mathsf h(S)|K|+|G/K|-2$, which will be useful for showing that the coset condition holds in the following examples.

\subsection*{Example A.1} If $G=\la g\ra$ is cyclic of composite order   with $p$ the smallest prime divisor of $|G|$ and $H\leq G$ the subgroup of order $p$, then the sequence $S= g^{[\frac1p|G|-1]}\bdot \prod^\bullet_{h\in H}h^{[\frac1p|G|]}$ has $|S|=|G|+\frac{1}{p}|G|-1$, satisfies the coset condition, and yet $\Sigma_{|G|}(S)=\sigma(S)-\Sigma_{|G|/p-1}(S)\neq G$. This shows that we can do no better than $n_G\geq |G|/p$ when $G$ is cyclic.

\subsection*{Example A.2} If $G$ is non-cyclic, then $G=H\oplus\la g\ra\cong H\oplus  C_{\exp(G)}$ for some nontrivial subgroup $H< G$. In this case, the sequence $S= g^{[\exp(G)-1]}\bdot \prod_{h\in H}^{\bullet}h^{[\exp(G)]}$ has $|S|=|G|+\exp(G)-1$, satisfies the coset condition, and yet $\Sigma_{|G|}(S)=\sigma(S)-\Sigma_{\exp(G)-1}(S)\neq G$. Thus
we can do no better than $n_G\geq \exp(G)$ when $G$ is non-cyclic.

\subsection*{Example A.3} If $G$ is neither cyclic nor isomorphic to $C_2^2$ and $G=H\oplus\la g\ra\cong H\oplus  C_{\exp(G)}$ with $|H|\geq \exp(G)$, then  the sequence $S'=\prod_{h\in H\setminus \{0\}\cup\{g\}}^{\bullet}h^{[\exp(G)+1]}$ has $|S'|=|G|+|H|\geq |G|+\exp(G)$ and $\Sigma_{\exp(G)}(S')\neq G$. Thus, any subsequence $S\mid S'$ with $|S|=|G|+\exp(G)$ will have $\Sigma_{|G|}(S)=\sigma(S)-\Sigma_{\exp(G)}(S)\subseteq \sigma(S)-\Sigma_{\exp(G)}(S')\neq G$. If we choose $S$ such that $\vp_h(S)\geq \exp(G)$ for all $h\in \supp(S')$, then $S$ will also satisfy the coset condition (since $G\not\cong C_2^2$), showing we can do no better than $n_G\geq \exp(G)+1$ when $|G|\geq \max\{5,\, \exp(G)^2\}$.

\medskip

In all the above examples, we have made use of the general fact that $\Sigma_n(S)=\sigma(S)-\Sigma_{|S|-n}(S)$, which follows in view of the  one-to-one correspondence between a subsequence $T\mid S$ of length $|T|=n$ and its complementary sequence $S\bdot {T}^{[-1]}$. If one is interested in studying the set of $n$-term subsums $\Sigma_n(S)$, then having a term with multiplicity greater than $n$ is no better than having the same term with multiplicity equal to $n$. In other words,
$\Sigma_n(S)=\Sigma_n(S')$, where $S'\mid S$ is the subsequence with $\vp_x(S')=\min\{\vp_x(S),\,n\}$ for all $x\in \supp(S)$. In light of this basic observation, it generally makes little sense to consider $\Sigma_n(S)$ without the additional assumption limiting the maximal multiplicity to $\mathsf h(S)\leq n$. To a lesser extent, this also means that when studying $|\Sigma_{|G|}(S)|$, terms with multiplicity greater than $|S|-|G|$ are also redundant. Note, the coset condition for $S$ with the trivial subgroup is equivalent to $\mathsf h(S)\leq |S|-|G|+1$, so this is nearly achieved as part of the hypotheses of Theorem \ref{thm-olson}. When $|S|\geq 2|G|-1$, there can only be  one term $x\in \supp(S)$ with $\vp_x(S)=|S|-|G|+1$, meaning at most one term in $S$ is redundant, which proves to be negligible loss. However, the examples given above make use of much more non-negligible loss when $|S|=n+|G|$ with $n$ much smaller than $|G|-1$. If we disallow such redundant terms by imposing the slightly stronger hypothesis $\mathsf h(S)\leq |S|-|G|$, then we can obtain a result with optimal bounds for the size of $|S|$. The optimality of the bounds for $n$ can be seen by Examples B.1--B.3. Note $|S|=|G|+n$ with $\mathsf h(S)\leq n$ ensures that $n\geq 2$.

\begin{theorem}\label{thm-main-olson}
Let $G$ be a finite abelian group, let $n\geq 2$, and let $S\in \Fc(G)$ be a sequence of terms from $G$ with $|S|=|G|+n$ and $\mathsf h(S)\leq n$. Suppose, for every $H<G$ and $\alpha\in G$, there are at least $|G/H|-1$ terms of $S$ lying outside the coset $\alpha+H$. Then $\Sigma_{|G|}(S)=G$ whenever
\begin{itemize}
\item[1.] $n\geq \exp(G)$, or
\item[2.]  $n\geq \exp(G)-1$, \quad $G\cong H\oplus C_{\exp(G)}$, \quad and \quad either  $|H|$ or $\exp(G)$  is  prime, or
\item[3.] $n\geq \frac{|G|}{p}-1$ and $G$ is cyclic, where $p$ is the smallest prime divisor of $|G|$, or
\item[4.] $n\geq 2$ and either \quad $\exp(G)\leq 3$, \quad or \quad $|G|<12$, \quad or \quad $\exp(G)=4$ and $|G|= 16$.
\end{itemize}
\end{theorem}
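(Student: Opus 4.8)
The plan is to prove the equivalent assertion $\Sigma_n(S)=G$, via the identity $\Sigma_{|G|}(S)=\sigma(S)-\Sigma_n(S)$ recalled before the theorem (here $|S|-|G|=n$); as both $\Sigma_n(S)=G$ and the coset condition are unchanged under translating every term of $S$, we may assume $0\in\supp(S)$ with $\vp_0(S)=\h(S)$. If $n\ge|G|-1$ then $|S|=|G|+n\ge 2|G|-1$ and Theorem~\ref{thm-olson} applies directly --- its hypothesis \emph{is} the coset condition --- so we may assume $2\le n\le|G|-2$ and argue by induction on $|G|$. Since $\h(S)\le n$, distributing the copies of each term among distinct parts and then balancing sizes yields an $n$-setpartition $\mathscr A=A_1\bdot\ldots\bdot A_n$ with $\mathsf S(\mathscr A)=S$ and every $A_i$ nonempty; fix one maximizing $|A_1+\ldots+A_n|$. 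Because $A_1+\ldots+A_n\subseteq\Sigma_n(S)$, we may assume $A_1+\ldots+A_n\ne G$.

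Apply Kneser's Theorem to $A_1+\ldots+A_n$: were its stabilizer trivial we would get $|A_1+\ldots+A_n|\ge\sum_i|A_i|-(n-1)=|S|-n+1=|G|+1$, which is absurd; hence $H:=\mathsf{H}(A_1+\ldots+A_n)$ is a nontrivial proper subgroup, and the same bound gives $|G\setminus(A_1+\ldots+A_n)|\le(|H|-1)(n-1)-1$. Passing to $\overline G:=G/H$, the set $\phi_H(A_1)+\ldots+\phi_H(A_n)=\phi_H(A_1+\ldots+A_n)$ is aperiodic and proper in $\overline G$, omitting fewer than $n-1$ points, while the coset condition for $S$ at the subgroups of $G$ containing $H$ descends to the coset condition over $\overline G$. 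The induction hypothesis, applied to a bounded-multiplicity reduction $\widetilde S$ of $\phi_H(S)$ with $\Sigma_n(\widetilde S)=\Sigma_n(\phi_H(S))$ whenever $|\widetilde S|\ge|\overline G|+n$ and the coset condition survives, then resolves everything except the ``boundary'' configurations where too much is lost in the reduction; these are handled structurally, and this is the point at which Theorem~\ref{thm-main-sumset} enters.

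For the structural step one sharpens the choice of $\mathscr A$: maximality of $|A_1+\ldots+A_n|$ together with the DeVos--Goddyn--Mohar Theorem (the setpartition refinement of Kneser's) forces the $\phi_H(A_i)$ into a common shape --- after translation, $\phi_H(A_i)=c_i+B$ for all $i$ with a single set $B\subseteq\overline G$ satisfying $\langle B\rangle_*=\overline G$ --- so that $\phi_H(A_1+\ldots+A_n)=\bigl(\sum_i c_i\bigr)+nB$ with $nB$ aperiodic and omitting fewer than $n-1$ points of $\overline G$. When $|nB|<(|B|+1)n-3$ we invoke Theorem~\ref{thm-main-sumset}, or, for $n$ large relative to $\exp(\overline G)$, Corollaries~\ref{cor1} and~\ref{cor2}: each possibility for $B$ --- a short arithmetic progression, or one of the listed near-coset sets --- pulls back through $\phi_H$ to force all but at most $|G/K|-2$ terms of $S$ into a single coset $\alpha+K$ for some $K<G$, contradicting the coset condition unless $n$ lies below the threshold asserted in the theorem. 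The complementary range $|nB|\ge(|B|+1)n-3$, with $\overline G$ comparatively large, is dispatched by a direct count pitting the ``omits fewer than $n-1$ points'' estimate against the coset condition.

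It remains to run the four cases. In Case~1 the only obstruction surviving the above is a cyclic-progression configuration modeled on Example~A.2, which forces $n\le\exp(G)-1$, contrary to hypothesis. Cases~2 and~3 are the delicate ones: at the extreme values $n=\exp(G)-1$ (with $G\cong H\oplus C_{\exp(G)}$ and $|H|$ or $\exp(G)$ prime) and $n=|G|/p-1$ ($G$ cyclic, $p$ its least prime divisor), Kneser and DeVos--Goddyn--Mohar alone no longer suffice, and one feeds the full Kemperman-type classification of Theorem~\ref{thm-main-sumset} into the argument --- the primality hypothesis, respectively the collapse of the list in Theorem~\ref{thm-main-sumset} over a cyclic group (e.g.\ possibility~(ii) cannot occur there), being precisely what eliminates the last borderline configuration and recovers the optimal bound on $n$. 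Case~4 combines the same analysis with the observation that for low exponent or small order the pathological possibilities of Theorem~\ref{thm-main-sumset} either do not arise or succumb to a short finite check. The principal obstacle is the structural transfer of the third paragraph --- reducing the genuinely multi-summand quantity $|A_1+\ldots+A_n|$ to the single-set hypothesis of Theorem~\ref{thm-main-sumset} without forfeiting the sharp dependence on $n$ --- followed closely by the boundary Cases~2 and~3, where only the Kemperman-level structure is strong enough to squeeze out the final unit in the threshold.
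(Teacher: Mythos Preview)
There is a genuine gap in your structural step. You assert that maximality of the setpartition together with the DeVos--Goddyn--Mohar Theorem forces $\phi_H(A_i)=c_i+B$ for a single common set $B$ with $\langle B\rangle_*=\overline G$, so that $\phi_H(A_1+\ldots+A_n)$ becomes an $n$-fold iterated sumset $nB$ and Theorem~\ref{thm-main-sumset} applies. Neither DGM nor the Partition Theorem gives this. What the Partition Theorem actually outputs (Theorem~\ref{thm-partition-thm}.2) is a setpartition in which each $\phi_H(A_j)$ equals $X$ or $X$ together with one extra element, where $X\subseteq G/H$ is the set of $H$-cosets carrying at least $n$ terms of $S$; the sets are \emph{not} translates of a common shape, and the extra points may lie anywhere. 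More seriously, even once you have most $\phi_H(A_j)$ equal to $X$, the assertion $\langle X\rangle_*=G/H$ needs its own proof: a priori $X$ could generate a strictly smaller subgroup, in which case Theorem~\ref{thm-main-sumset}/Corollary~\ref{cor2} cannot be invoked. The paper supplies exactly this missing step as Lemma~\ref{lem-partition-extra}, which you do not use or replace.

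The paper's architecture is also different from yours in a second respect. It does not induct on $|G|$: it first proves Theorem~\ref{thm-main-nsums} directly --- Partition Theorem to obtain $X$ and the setpartition, Lemma~\ref{lem-partition-extra} to force $\langle X\rangle_*=G/H$, then Corollary~\ref{cor2} applied to the single iterated sumset $nX$ --- and only afterwards deduces Theorem~\ref{thm-main-olson} from Theorem~\ref{thm-main-nsums} via the complement identity, with just the two leftover cases $|G|=10$ and $(\exp(G),|G|)=(4,16)$ dispatched by a short count using \eqref{grabto}. Your induction on $|G|$ via a bounded-multiplicity reduction $\widetilde S$ of $\phi_H(S)$ creates precisely the boundary configurations you acknowledge but only handwave; in the paper's approach those never arise, because the reduction to a single set $X$ happens through the Partition Theorem rather than through an inductive descent on the group.
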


\subsection*{Example B.1} Suppose $G=\la g\ra$ is cyclic of composite order $|G|\geq 10$ with $p$ the smallest prime divisor of $|G|$ and $H<G$ the subgroup of order $p$. Then the sequence $S'=\prod_{x\in H\cup (g+H)}^{\bullet}x^{[\frac1p |G|-2]}$ has $|S'|=2|G|-4p\geq |G|+\frac1p|G|-3$ with the inequality strict for $|G|>10$, $\mathsf h(S')\leq \frac1p|G|-2$, and $\Sigma_{\frac1p|G|-2}(S')\neq G$. If $S\mid S'$ is any subsequence with $|S|=|G|+\frac1p |G|-2$, then $\mathsf h(S)\leq \frac1p|G|-2$ and $\Sigma_{|G|}(S)=\sigma(S)-\Sigma_{\frac1p|G|-2}(S)\subseteq \sigma(S)-\Sigma_{\frac1p|G|-2}(S')\neq G$. If we choose $S$ so that $\vp_h(S)=\frac1p|G|-2$ for all $h\in H\cup \{g\}$, then $S$  (and also $S'$)  satisfies the coset condition. This shows the bound $n\geq \frac1p|G|-1$ is tight in Theorem \ref{thm-main-olson}.3 and in  Theorem \ref{thm-main-nsums}.3 below.

\subsection*{Example B.2} Suppose $G=H \oplus \la g\ra\cong H\oplus C_{\exp(G)}$ with $H$ nontrivial and $\exp(G)\geq 5$. In this case, the sequence $S'=\prod_{x\in H\cup (g+H)}^{\bullet}x^{[\exp(G)-2]}$ has $|S'|=2|G|-4|H|\geq |G|+\exp(G)-2$ and $\Sigma_{\exp(G)-2}(S')\neq G$.
If $S\mid S'$ is any subsequence with $|S|=|G|+\exp(G)-2$, then $\mathsf h(S)\leq \exp(G)-2$ and $\Sigma_{|G|}(S)=\sigma(S)-\Sigma_{\exp(G)-2}(S)\subseteq \sigma(S)-\Sigma_{\exp(G)-2}(S')\neq G$. If we choose $S$ so that $\vp_x(S)=\exp(G)-2$ for all $x\in H\cup \{g\}$, then $S$ (and also $S'$) satisfies the coset condition. This shows the bound $n\geq \exp(G)-1$ is tight in Theorem \ref{thm-main-olson}.2 and in Theorem \ref{thm-main-nsums}.2 below.

\subsection*{Example B.3} Suppose $G=H\oplus \la g\ra\cong H\oplus C_{\exp(G)}$ with $|H|$ and $\exp(G)$ composite, which implies $|G|\geq 16$. Let $K<H$ be a subgroup of order $|K|=\frac1p|H|$ where $p$ is the smallest prime divisor of $|H|$.  The sequence $S'=\prod_{x\in H\cup (g+K)}^{\bullet}x^{[\exp(G)-1]}$ has $|S'|=\frac{(p+1)}{p}|H|(\exp(G)-1)=|G|+\frac{m-p-1}{pm}|G|$, where $m=\exp(G)$, and $\Sigma_{\exp(G)-1}(S')\neq G$. Note $|G|=m|H|\geq mp^2$  since $|H|$ is composite, while $m=\exp(G)\geq 2p$ since $\exp(G)$ is composite. Thus, if $m-2\geq \frac{m-p-1}{pm}|G|$, then $m-2\geq (m-p-1)p$, implying $p^2-3p+2\leq 0$, in turn implying $p=2$. Moreover, $\frac{m-p-1}{pm}|G|\geq m-2$.
When $p=2$, we further obtain $m-2\geq (m-p-1)p=2m-6$, implying $\exp(G)=m\leq 4$, which is only possible if $m=\exp(G)=4$ (since $\exp(G)$ is composite). Hence $m-2\geq \frac{m-p-1}{pm}|G|$ implies equality holds with  $|G|\leq 16$ and $\exp(G)=4$. However, since $|H|$ is composite, this is only possible if $|G|=16$. Therefore, we conclude that $|S'|\geq |G|+\exp(G)-2$, and that  $|S'|\geq |G|+\exp(G)-1$ when  $|G|\neq 16$. In the latter case,
letting $S\mid S'$ be a subsequence of length $|S|=|G|+\exp(G)-1$, we find that $\mathsf h(S)\leq \exp(G)-1$ and  $\Sigma_{|G|}(S)=\sigma(S)-\Sigma_{\exp(G)-1}(S)\subseteq\sigma(S)-
\Sigma_{\exp(G)-1}(S')\neq G$. If we choose $S$ so that $\vp_x(S)=\exp(G)-1$ for all $x\in H\cup \{g\}$, then $S$  also satisfies the coset condition. This shows the bound $n\geq \exp(G)$ is tight in Theorem \ref{thm-main-olson}.1. Since $S'$ also satisfies the coset condition with $|S'|\geq |G|+\exp(G)-2$, the bound $n\geq \exp(G)$ is also tight in Theorem \ref{thm-main-nsums}.1 below.

\medskip

The second approach to generalizing Theorem \ref{thm-main-olson} is to replace $\Sigma_{|G|}(S)$ with $\Sigma_n(S)$ under the hypothesis that $\mathsf h(S)\leq n$. In this direction, there are results related to an analog of Kneser's Theorem for subsequence sums obtained either via the DeVos-Goddyn-Mohar Theorem \cite{DGM} \cite[Theorem 13.1]{Gbook} or the Partition Theorem \cite[Theorem 14.1]{Gbook}. Let us begin by stating the original theorem of Kneser for sumsets \cite[Theorem 4.1.1]{Alfred-Ruzsa-book} \cite[Theorem 5.2.6]{alfredbook} \cite[Theorem 6.1]{Gbook} \cite{kneserstheorem} \cite[Theorem 4.1]{natboook} \cite[Theorem 5.5]{taobook}.

\begin{theirtheorem}[Kneser's Theorem] Let $G$ be an abelian group and let $A_1,\ldots,A_n\subseteq G$ be finite, nonempty subsets. Then $$|\Sum{i=1}{n}A_i|\geq \Sum{i=1}{n}|A_i+H|-(n-1)|H|=\Sum{i=1}{n}|A_i|-(n-1)|H|+\rho,$$ where $H=\mathsf H(\Sum{i=1}{n}A_i)$ and $\rho:=\Sum{i=1}{n}|(A_i+H)\setminus A_i|$.
\end{theirtheorem}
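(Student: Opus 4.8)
The plan is to deduce this from the two-summand case, which is the classical Kneser theorem, and to prove that case by the Dyson $e$-transform.

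\emph{Reduction to two summands.} The two right-hand sides agree because $|A_i+H|=|A_i|+|(A_i+H)\setminus A_i|$, so only the first inequality needs to be shown. Put $S=\Sum{i=1}{n}A_i$ and $H=\mathsf H(S)$. Since homomorphisms commute with sumsets, $\phi_H(S)=\Sum{i=1}{n}\phi_H(A_i)$ inside $G/H$, and this set is aperiodic there: a nontrivial stabilizer would pull back to a subgroup of $G$ strictly containing $H$ and fixing $S$, contradicting $H=\mathsf H(S)$. Since $|S|=|H|\,|\phi_H(S)|$ and $|A_i+H|=|H|\,|\phi_H(A_i)|$, multiplying the conclusion in $G/H$ by $|H|$ recovers the one in $G$; thus it suffices to prove the \emph{aperiodic form}: if $\Sum{i=1}{n}A_i$ is aperiodic, then $|\Sum{i=1}{n}A_i|\ge\Sum{i=1}{n}|A_i|-(n-1)$. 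I would prove this by induction on $n$, the case $n=1$ being trivial. For $n\ge2$, set $B=\Sum{i=1}{n-1}A_i$; since $\mathsf H(B)$ fixes $B$ and hence $B+A_n=S$, we get $\mathsf H(B)\le\mathsf H(S)=\{0\}$, so $B$ is aperiodic, and by induction $|B|\ge\Sum{i=1}{n-1}|A_i|-(n-2)$. Applying the two-summand case to $B$ and $A_n$ (legitimate since $B+A_n=S$ is aperiodic) gives $|S|\ge|B|+|A_n|-1\ge\Sum{i=1}{n}|A_i|-(n-1)$. So everything reduces to: for finite nonempty $A,B\subseteq G$ with $H=\mathsf H(A+B)$, $|A+B|\ge|A+H|+|B+H|-|H|$. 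Applying this bound to the pair $(A+H,\,B+H)$ — which has the same sumset and the same stabilizer $H$ — shows it is equivalent to the cleaner inequality $|A+B|\ge|A|+|B|-|\mathsf H(A+B)|$, which is what I would actually target.

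\emph{The two-summand case.} After translating I may assume $0\in A\cap B$, and say $|B|\le|A|$. I would argue by induction on $|B|$, with periodicity handled by a secondary induction (e.g.\ on $|G|$). The base case $|B|=1$ reads $|A|\ge|A|$. For $|B|\ge2$, I use the Dyson transform: for $e\in G$ set $A_{(e)}=A\cup(B+e)$ and $B_{(e)}=B\cap(A-e)$; then $A_{(e)}+B_{(e)}\subseteq A+B$, and the bijection $x\mapsto x-e$ from $A\cap(B+e)$ onto $B\cap(A-e)$ gives $|A_{(e)}|+|B_{(e)}|=|A|+|B|$. If some $e$ has $B\cap(A-e)\ne\emptyset$ and $B+e\not\subseteq A$, then $0<|B_{(e)}|<|B|$; I feed $(A_{(e)},B_{(e)})$ into the induction — passing first to the quotient by $\mathsf H(A_{(e)}+B_{(e)})$ when that sumset is periodic — and conclude for $A+B\supseteq A_{(e)}+B_{(e)}$. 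If no such $e$ exists, then every translate of $B$ meeting $A$ lies inside $A$; letting $e$ range over $A$ and using $0\in B$ forces $a+B\subseteq A$ for all $a\in A$, so each $b\in B$ lies in $\mathsf H(A)$, whence $A$ is $\la B\ra_*$-periodic, $A+B=A$, and with $H=\mathsf H(A+B)\supseteq\la B\ra_*\supseteq B$ one gets $B+H=H$ and $|A+B|=|A+H|=|A+H|+|B+H|-|H|$.

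\emph{Main obstacle.} The substance is the two-summand theorem, and the hard part will be that the Dyson transform need not preserve the stabilizer of the sumset: $\mathsf H(A_{(e)}+B_{(e)})$ can be larger than $\mathsf H(A+B)$, so one cannot merely quote the inductive inequality for $A_{(e)}+B_{(e)}$ and be done. Pushing it through requires choosing the transform carefully and organizing the induction (typically on a composite parameter such as $|G|+|B|$, with the periodic case treated after quotienting by the sumset's stabilizer and a separate argument when that quotient fails to strictly decrease the parameter) so the stabilizer is correctly controlled at every step. A lesser point, but one the first paragraph already relies on, is that passing from $n$ summands to $2$ genuinely needs the preliminary reduction to an aperiodic total sumset: a direct induction on $n$ for the periodic form does not close, because $\mathsf H(\Sum{i=1}{n-1}A_i)$ may be strictly smaller than $H$ and enlarging it to $H$ degrades the term $-(n-2)|H|$, whereas after the reduction every stabilizer met along the way is trivial.
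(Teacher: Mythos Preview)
The paper does not give its own proof of Kneser's Theorem; it is stated as a classical result and attributed to the standard references (Kneser's original paper and the textbooks of Geroldinger--Ruzsa, Geroldinger--Halter-Koch, Grynkiewicz, Nathanson, and Tao--Vu). So there is nothing in the paper to compare your argument against directly.

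That said, your plan is the standard one found in those references: reduce to the aperiodic case by quotienting by the stabilizer, reduce $n$ summands to two by an easy induction (using that aperiodicity of the total sumset forces aperiodicity of every partial sumset), and then prove the two-summand case by the Dyson $e$-transform. Your degenerate-case analysis (when no $e$ reduces $|B|$) is correct. You have also correctly identified the genuine obstacle: after a transform, $\mathsf H(A_{(e)}+B_{(e)})$ may strictly contain $\mathsf H(A+B)$, so the inductive bound $|A_{(e)}+B_{(e)}|\ge|A|+|B|-|\mathsf H(A_{(e)}+B_{(e)})|$ is too weak on its own. The standard fix (as in the cited texts) is to organize the induction as a minimal-counterexample argument on, say, $|A|+|B|$, and observe that if the transformed pair satisfies the bound with stabilizer $K$, then $A+B$ contains a $K$-periodic set of size at least $|A|+|B|-|K|$; either $|K|\le|\mathsf H(A+B)|$ and you are done, or $A+B$ itself must be $K$-periodic (since a $K$-periodic subset of the right size is forced to equal $A+B$ once $|A+B|<|A|+|B|$), contradicting the assumed minimality or aperiodicity. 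Your write-up stops just short of executing this, but you have isolated exactly the right difficulty.
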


Note $\Sum{i=1}{n}A_i=\Sum{i=1}{n}(A_i+H)$, and $\rho$ measures the number of ``holes'' in the sets $A_i$ relative to the sets $A_i+H$. The version of Kneser's Theorem valid for $n$-term subsums is the following (see the discussion in \cite[pp. 181--182]{Gbook}).

\begin{theirtheorem}[Subsum Kneser's Theorem]\label{thm-subsum-kneser}
Let $G$ be an abelian group, let $n\geq 1$, let $S\in \Fc(G)$ be a sequence  with $\mathsf h(S)\leq n\leq |S|$ and  let $H=\mathsf H(\Sigma_n(S))$. Then \begin{align*}|\Sigma_n(S)|\geq&\; (|S|-n+1)-(n-e-1)(|H|-1)+\rho,\\ =& \; |S|-(n-1)|H|+e(|H|-1)+\rho,\end{align*} where $\rho=|X||H|n+e-|S|\geq 0$, with
$X\subseteq G/H$ the subset of all $x\in G/H$ for which $x$ has multiplicity at least $n$ in $\phi_H(S)$, and  $e\geq 0$ the number of terms from $S$ not contained in $\phi_H^{-1}(X)$.
\end{theirtheorem}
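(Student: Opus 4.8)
The plan is to bound $|\Sigma_n(S)|$ below by $\big|\sum_{i=1}^n B_i\big|$ for a carefully chosen length-$n$ setpartition $\mathscr{B}=B_1\bdot\ldots\bdot B_n$, and then to invoke the Partition Theorem \cite[Theorem 14.1]{Gbook} (equivalently, the DeVos--Goddyn--Mohar Theorem \cite{DGM}, \cite[Theorem 13.1]{Gbook}). The hypothesis $\mathsf{h}(S)\leq n\leq|S|$ is precisely what allows $S$ to be written as $\mathsf{S}(\mathscr{B})$ for a length-$n$ setpartition $\mathscr{B}$: one distributes the $\vp_g(S)\leq n$ copies of each term $g$ among that many distinct parts, keeping all $n$ parts nonempty since $|S|\geq n$. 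For any such $\mathscr{B}$, picking one element out of each $B_i$ together with a corresponding slot of $S$ (one from each part) yields an $n$-term subsequence of $S$; hence $\sum_{i=1}^n B_i\subseteq\Sigma_n(S)$. The Partition Theorem furnishes such a $\mathscr{B}$ for which $\sum_{i=1}^n B_i$ attains the DeVos--Goddyn--Mohar lower bound and, crucially, satisfies $\mathsf{H}\big(\sum_{i=1}^n B_i\big)=\mathsf{H}(\Sigma_n(S))=:H$. Writing $m_Q=\sum_{g\in Q}\vp_g(S)$ for the number of terms of $S$ lying in the coset $Q\in G/H$, this gives
$$|\Sigma_n(S)|\;\geq\;\Big|\sum_{i=1}^n B_i\Big|\;\geq\;|H|\Big(1-n+\sum_{Q\in G/H}\min\{n,\,m_Q\}\Big).$$

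With this inequality in hand the rest is bookkeeping. Splitting the sum by membership in $X$: each $Q\in X$ has $m_Q\geq n$ and contributes $n$, for a total of $n|X|$; each $Q\notin X$ contributes $m_Q$, and $\sum_{Q\notin X}m_Q=e$ by definition of $e$. Thus $\sum_{Q\in G/H}\min\{n,m_Q\}=n|X|+e$, so
$$|\Sigma_n(S)|\;\geq\;|H|\big(n|X|+e-n+1\big).$$
Since $|S|=\sum_{Q}m_Q=\big(\sum_{Q\in X}m_Q\big)+e$, we have $\rho=|X||H|n+e-|S|=|X||H|n-\sum_{Q\in X}m_Q$; substituting $\sum_{Q\in X}m_Q=|S|-e$ and expanding shows that $|H|(n|X|+e-n+1)$ equals both of the stated right-hand sides, $(|S|-n+1)-(n-e-1)(|H|-1)+\rho$ and $|S|-(n-1)|H|+e(|H|-1)+\rho$. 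Finally $\rho\geq0$: because $\mathsf{h}(S)\leq n$, every coset satisfies $m_Q=\sum_{g\in Q}\vp_g(S)\leq n|Q|=n|H|$, whence $\sum_{Q\in X}m_Q\leq n|X||H|$.

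All the genuine content is delegated to the Partition Theorem (or to DeVos--Goddyn--Mohar), so I expect no conceptual obstacle. The two points needing care are: first, confirming the hypotheses of that theorem are met (just $\mathsf{h}(S)\leq n\leq|S|$) and quoting its conclusion in the per-coset form above --- in particular the equality $\mathsf{H}(\sum_i B_i)=\mathsf{H}(\Sigma_n(S))$, which is what makes the resulting bound sharp rather than merely valid for some smaller subgroup; and second, carrying the elementary identity between the DeVos--Goddyn--Mohar bound and the $(\rho,e,X)$ formulation through both equivalent forms. The one subtlety worth flagging is that the nonnegativity of $\rho$ genuinely relies on the multiplicity restriction $\mathsf{h}(S)\leq n$.
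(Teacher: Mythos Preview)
Your proposal is correct and follows exactly the route the paper indicates: the paper does not prove this result in detail but remarks that it follows from the DeVos--Goddyn--Mohar Theorem (or the Partition Theorem), noting that ``a short calculation shows that the bound given in Theorem~\ref{thm-subsum-kneser} is equal to $((N-1)n+e+1)|H|=(\sum_{x\in G/H}\min\{n,\vp_x(\phi_H(S))\}-n+1)|H|$''---which is precisely the bookkeeping you carry out. One small point of phrasing: the Partition Theorem only guarantees $\mathsf H(\sum_i B_i)=H$ in its Case~2, so for the displayed inequality it is cleanest to cite DGM directly (as you parenthetically do), which gives the per-coset bound with $H=\mathsf H(\Sigma_n(S))$ without a case split.
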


 A short calculation shows that the bound given in Theorem \ref{thm-subsum-kneser} is equal to $((N-1)n+e+1)|H|=(\Summ{x\in G/H}\min\{n,\,\vp_x(\phi_H(S))\}-n+1)|H|$, where $N=|X|$, which is how the bound is stated in \cite{Gbook} and \cite{DGM}. The form given above is perhaps easier to apply in practice and highlights the connection with the bound from Kneser's Theorem better.  If we define $S^*$ to be the sequence obtained from $S$ (as given in Theorem \ref{thm-subsum-kneser}) by taking each term $x\in \phi_H^{-1}(X)$ and changing its multiplicity from $\vp_x(S)$ to $\vp_x(S^*)=n$, then $S\mid S^*$, $|S^*|=|S|+\rho$ and $\Sigma_n(S)=\Sigma_n(S^*)$ with $\rho$ measuring the number of ``holes'' in the sequence $S$ relative to $S^*$. The sequence $S^*$ plays the same role in Theorem \ref{thm-subsum-kneser} as the sets $A_i+H$ in the bound $|\Sum{i=1}{n}A_i|\geq \Sum{i=1}{n}|A_i+H|-(n-1)|H|$ obtained from Kneser's Theorem.
As mentioned above, Theorem \ref{thm-subsum-kneser} can be obtained either from the DeVos-Goddyn-Mohar Theorem or the  Partition Theorem. The Partition Theorem first appeared (in some form) in \cite{ccd}, with the variation allowing $S'\mid S$ appearing in \cite{hamconj}. The more general form given below, which subtlety refines and strengthens the Subsum Kneser's Theorem, may be found in \cite[Theorem 14.1]{Gbook}, slightly reworded here.

\begin{theirtheorem}[Partition Theorem]\label{thm-partition-thm} Let $G$ be an abelian group, let $n\geq 1$, let $S\in \Fc(G)$ be a sequence, let $S'\mid S$ be a subsequence with $\mathsf h(S')\leq n\leq |S'|$, let $H=\mathsf H(\Sigma_n(S))$, let $X\subseteq G/H$ be the subset of all $x\in G/H$ for which $x$ has multiplicity at least $n$ in $\phi_H(S)$, and let $e$ be the number of terms from $S$ not contained in $\phi_H^{-1}(X)$.  Then there exists a setpartition $\sA=A_1\bdot\ldots\bdot A_n\in \sP(G)$ with $\mathsf S(\sA)\mid S$ and $|\mathsf S(\sA)|=|S'|$ such that
either
\begin{itemize}
\item[1.] $|\Sigma_n(S)|\geq |\Sum{i=1}{n}A_i|\geq \Sum{i=1}{n}|A_i|-n+1=|S'|-n+1$, or
\item[2.]
$|\Sigma_n(S)|=|\Sum{i=1}{n}A_i|\geq \Sum{i=1}{n}|A_i+H|-(n-1)|H|=|S'|-(n-1)|H|+e(|H|-1)+\rho$, where  $\rho=|X||H|n+e-|S'|\geq 0$, \quad $\Sigma_n(S)=\Sum{i=1}{n}A_i$ with $H$ nontrivial, \quad $|A_j\setminus \phi_H^{-1}(X)|\leq 1$ \quad  and  \quad $\supp(\mathsf S(\sA)^{[-1]}\bdot S)\subseteq \phi_H^{-1}(X)\subseteq A_j+H$ \quad for all $j\in [1,n]$.
\end{itemize}
\end{theirtheorem}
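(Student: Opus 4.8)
The plan is to produce the setpartition $\sA$ by an extremal argument, built on the elementary fact that
\[
\sum_{i=1}^{n}A_i\subseteq \Sigma_n(S)\qquad\text{for every }\sA=A_1\bdot\ldots\bdot A_n\in\sP(G)\text{ with }\mathsf S(\sA)\mid S,
\]
since picking $a_i\in A_i$ yields the $n$-term subsequence $a_1\bdot\ldots\bdot a_n\mid\mathsf S(\sA)\mid S$ of sum $\sum a_i$. Because $\h(S')\le n\le|S'|$, the terms of $S'$ can be distributed into $n$ nonempty sets, so the family $\mathcal P$ of all length-$n$ setpartitions $\sA$ with $\mathsf S(\sA)\mid S$ and $|\mathsf S(\sA)|=|S'|$ is nonempty; crucially $\mathsf S(\sA)$ need not equal $S'$, which leaves us free to choose \emph{which} $|S'|$ terms of $S$ to use. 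I would then fix $\sA\in\mathcal P$ with $|\sum_{i=1}^{n}A_i|$ minimal, and, subject to that, with a suitable secondary invariant optimized (for instance, making the parts meet as many cosets of $K:=\mathsf H(\sum_{i=1}^{n}A_i)$ as possible, i.e.\ maximizing $\sum_i|A_i+K|$), adding further tie-breaking rules as the exchange arguments below demand. Everything then reduces to analyzing this extremal $\sA$.

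Second, I apply Kneser's Theorem to the parts: with $K=\mathsf H(\sum_{i=1}^{n}A_i)$,
\[
\Big|\sum_{i=1}^{n}A_i\Big|\ \ge\ \sum_{i=1}^{n}|A_i+K|-(n-1)|K|\ =\ \sum_{i=1}^{n}|A_i|-(n-1)|K|+\rho'\ =\ |S'|-(n-1)|K|+\rho',
\]
with $\rho'=\sum_i|(A_i+K)\setminus A_i|\ge 0$. If $K$ is trivial this gives $|\sum A_i|\ge|S'|-n+1$, and since $\sum A_i\subseteq\Sigma_n(S)$, conclusion~1 holds. So the real work is the case $K$ nontrivial.

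Third, in the case $K$ nontrivial I would first show $\Sigma_n(S)=\sum_{i=1}^{n}A_i$: if some $g=\sigma(T)\in\Sigma_n(S)$ with $T=b_1\bdot\ldots\bdot b_n\mid S$ lay outside the $K$-periodic set $\sum A_i$, then an exchange of terms between $T$ and $\sA$ — keeping the length $n$ and the total size $|S'|$ — produces $\sA'\in\mathcal P$ that is at least as good in $|\sum A_i'|$ and strictly better in the secondary invariant (or strictly better in $|\sum A_i'|$ itself), contradicting extremality. Hence $\Sigma_n(S)=\sum A_i$ is $K$-periodic, so $H:=\mathsf H(\Sigma_n(S))=\mathsf H(\sum A_i)=K$ is nontrivial and $\Sigma_n(S)=\sum_{i=1}^{n}A_i$. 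Working modulo $H$, I would then use extremality again — a free lift of any coset of $X$ (there are at least $n$ in $S$) can be exchanged into a part missing it, and a part meeting two cosets outside $X$ can be rebalanced — to conclude $X\subseteq\phi_H(A_i)$ and $|A_i\setminus\phi_H^{-1}(X)|\le 1$ for all $i$, and that every term of $S$ outside $\phi_H^{-1}(X)$ already appears in $\mathsf S(\sA)$, i.e.\ $\supp(\mathsf S(\sA)^{[-1]}\bdot S)\subseteq\phi_H^{-1}(X)\subseteq A_i+H$ for all $i$. Combining these forces $\sum_{i=1}^{n}|\phi_H(A_i)|\ge|X|n+e$ with $e\le n$ the number of such outside terms, so Kneser now yields
\[
\Big|\sum_{i=1}^{n}A_i\Big|\ \ge\ \sum_{i=1}^{n}|A_i+H|-(n-1)|H|\ =\ |S'|-(n-1)|H|+e(|H|-1)+\rho,\qquad\rho=|X||H|n+e-|S'|,
\]
which is conclusion~2. (As an alternative to the first part of this step one could invoke the DeVos–Goddyn–Mohar Theorem for the bound and then construct the setpartition separately, but the extremal approach handles both at once.)

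The main obstacle is exactly the bundle of exchange arguments in the third step: showing that a minimal setpartition with $H$-periodic sumset must \emph{capture all} of $\Sigma_n(S)$, and that the parts may moreover be taken to saturate $X$ modulo $H$ and to absorb every $S$-term outside $\phi_H^{-1}(X)$, at most one per part. This requires juggling several competing extremal invariants simultaneously and dispatching the degenerate configurations (parts of size $1$, the case $|H|=2$, terms of multiplicity exactly $n$, and the borderline $e=n$), and it is here that the precise definitions of $X$, $e$, and the hole count $\rho$ must be calibrated so that the Kneser estimate on the parts reproduces the stated formula. The reduction $\sum A_i\subseteq\Sigma_n(S)$ and the trivial-stabilizer branch are routine by comparison.
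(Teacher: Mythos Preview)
The paper does not prove this statement. Theorem~\ref{thm-partition-thm} is stated as a \texttt{theirtheorem}, i.e.\ a result quoted from the literature, with the reference ``\cite[Theorem 14.1]{Gbook}'' given explicitly; the surrounding discussion also traces its history to \cite{ccd} and \cite{hamconj}. There is therefore no proof in this paper to compare your proposal against.

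That said, your outline is broadly in the spirit of how the Partition Theorem is actually proved in \cite{Gbook}: one works with an extremal $n$-setpartition, applies Kneser's Theorem to the sumset of its parts, and in the nontrivial-stabilizer case uses exchange arguments to show the setpartition realizes all of $\Sigma_n(S)$ and that the parts can be taken to saturate the high-multiplicity cosets $X$ modulo $H$. Your identification of the main difficulty --- the exchange/swapping arguments needed to force $\Sigma_n(S)=\sum A_i$, to get $X\subseteq\phi_H(A_j)$, $|A_j\setminus\phi_H^{-1}(X)|\le 1$, and $\supp(\mathsf S(\sA)^{[-1]}\bdot S)\subseteq\phi_H^{-1}(X)$ --- is accurate, and this is indeed where the bulk of the technical work lies. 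What you have written is a plausible proof \emph{sketch}, but not yet a proof: the extremal invariants you mention (``maximize $\sum_i|A_i+K|$'', with ``further tie-breaking rules as the exchange arguments below demand'') are left unspecified, and in the actual argument the precise choice and ordering of these invariants is delicate and drives each exchange step. If you want to turn this into a complete proof you would need to consult \cite[Chapter 14]{Gbook} for those details; the present paper simply takes the theorem as given.
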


If $|\Sigma_n(S)|< |S|-n+1$, then combining this bound with the lower bound from Theorem \ref{thm-subsum-kneser} implies  that there are a small number of $H$-cosets, namely $N=|X|\geq 1$, containing most of the terms from $S$, namely all but $e$ terms. For large $n$, say $n\geq \frac1p|G|-1$ where $p$ is the smallest prime divisor of $\exp(G)$, comparing these lower and upper bounds forces $N=1$, leading to the coset condition holding for $S$, giving a version of Olson's Theorem valid for $n$-sums. However, it is actually possible to force the coset condition to hold for much smaller $n$. For instance, such a result was achieved for $n\geq \mathsf D^*(G)-1$ in \cite{oscar-weighted}. The Partition Theorem yields the bound given in Theorem \ref{thm-subsum-kneser}  but also shows that there is an actual setpartition $\sA=A_1\bdot\ldots\bdot A_n$ with either $|\Sigma_n(S)|\geq |\Sum{i=1}{n}A_i|\geq |S'|-n+1$ or $\Sigma_n(S)=\Sum{i=1}{n}A_i$. We will use this realization of $\Sigma_n(S)$ as a sumset together with the results from Section \ref{sec-itterated-sumsets} to reduce even further the necessary lower bound for $n$, and thereby obtain a generalization of Olson's Theorem \ref{thm-olson} from $|G|$-term to $n$-term subsums with optimal bounds for how large $n$ must be. The optimality follows in view of  Examples B.1--B.3.

\begin{theorem}\label{thm-main-nsums}
Let $G$ be a finite abelian group, let $n\geq 1$, and let $S\in \Fc(G)$ be a sequence of terms from $G$ with $|S|\geq n+|G|-1$ and $\mathsf h(S)\leq n$. Suppose, for every $H<G$ and $\alpha\in G$, there are at least $|G/H|-1$ terms of $S$ lying outside the coset $\alpha+H$. Then $\Sigma_n(G)=G$ whenever
\begin{itemize}
\item[1.] $n\geq \exp(G)$, or
\item[2.] $n\geq \exp(G)-1$, \quad   $G\cong H\oplus C_{\exp(G)}$, \quad and  \quad either  $|H|$ or $\exp(G)$ is prime, or
\item[3.] $n\geq \frac1p|G|-1$ and $G$ is cyclic,  where $p$ is the smallest prime divisor of $|G|$, or
\item[4.] $n\geq 1$ \quad and  \quad either $\exp(G)\leq 3$ or $|G|<10$.
\end{itemize}
\end{theorem}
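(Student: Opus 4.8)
The plan is to argue by contradiction, so assume $\Sigma_n(S)\neq G$. Apply the Partition Theorem (Theorem~\ref{thm-partition-thm}) to $S$ with a subsequence $S'\mid S$ of length $|S'|=n+|G|-1$, which exists since $\mathsf h(S)\leq n$ and $|S|\geq n+|G|-1$. Its first alternative would give $|\Sigma_n(S)|\geq|S'|-n+1=|G|$, a contradiction, so the second alternative holds: $H:=\mathsf H(\Sigma_n(S))$ is nontrivial and proper, $\Sigma_n(S)=\Sum{i=1}{n}A_i$ for the setpartition $\A=A_1\bdot\ldots\bdot A_n$ produced by the theorem, and, writing $X\subseteq G/H$ for the set of cosets of multiplicity $\geq n$ in $\phi_H(S)$ and $e$ for the number of terms of $S$ lying outside $\phi_H^{-1}(X)$, one has $\phi_H^{-1}(X)\subseteq A_j+H$ and $|A_j\setminus\phi_H^{-1}(X)|\leq 1$ for every $j$. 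Substituting the size estimate of the second alternative into $|\Sigma_n(S)|<|G|$, the $|S'|$-terms cancel and we are left with the clean inequality $n(|X|-1)+e+1<|G/H|$. If $|X|=1$, the coset condition applied to the proper subgroup $H$ forces $e\geq|G/H|-1$, contradicting this; hence $N:=|X|\geq 2$.

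Now quotient by $H$: put $\bar G=G/H$, $\bar S=\phi_H(S)$, and $\bar A_i=\phi_H(A_i)$. Then $\Sigma_n(\bar S)=\Sum{i=1}{n}\bar A_i$ is an \emph{aperiodic, proper} subset of $\bar G$; each $\bar A_i$ contains the fixed $N$-element set $X$ and satisfies $|\bar A_i|\leq N+1$; exactly $e$ of the $\bar A_i$ differ from $X$, each by one adjoined point; and $\bar S$ inherits the coset condition for every subgroup of $\bar G$ (these being the quotients of the subgroups $H\leq K\leq G$). Counting the $|\bar S|=n+|G|-1$ terms against the at most $nN|H|+e$ allowed by $\mathsf h(S)\leq n$ forces $|\bar G|\leq nN$; then $|\bar G|=nN$ is impossible (it would force $e\geq n-1$), and feeding $|\bar G|\leq nN-1$ back into the displayed inequality yields $e\leq n-3$. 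In particular $n\geq 3$ and at least three of the summands $\bar A_i$ equal $X$, so that $\Sum{i=1}{n}\bar A_i$ is governed by the iterated sumset $nX$: after translating so that $0\in X$, the (at most $e$) deviating summands are absorbed via Kneser's Theorem, and $|nX|\leq|\Sigma_n(\bar S)|<|\bar G|\leq nN-1$.

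The heart of the matter is to contradict the existence of this configuration; this is where Theorem~\ref{thm-main-sumset} and the hypotheses on $n$ enter. Since $n\geq 3$, apply Theorem~\ref{thm-main-sumset} to $X$ inside $\la X\ra_*$ --- first quotienting by $\mathsf H(nX)\subseteq\la X\ra_*$ if $nX$ is periodic, and invoking Lemma~\ref{lemma-itt-kst-size3} if $|X|=3$. As $|nX|\leq nN-2$, every listed value of $|nX|$ of the form $|X|n$, $|X|n+1$, or $|X|n-1=|\la X\ra_*|-1$ is excluded (each forces either $|nX|>|\bar G|$ or $|\la X\ra_*|=nN>|\bar G|$); hence the periodic alternatives (ii)--(iv) cannot occur, the recursive alternative (v) is eliminated by unwinding it inductively on $|\la X\ra_*|$, and the arithmetic-progression alternative (i) leaves only $|nX|=(|X|-1)n+1$, with $X$ an arithmetic progression and $\la X\ra_*$ cyclic of order $q\geq n(N-1)+1\geq n+1$ (the case $|X|=2$, where $X$ is automatically an arithmetic progression, is handled the same way). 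If $\la X\ra_*=\bar G$, then $\bar G$ is cyclic of order $q\geq n+1>n\geq\exp(G)\geq\exp(\bar G)$, absurd; if $\la X\ra_*<\bar G$, then $n(N-1)+1\leq q\leq|\bar G|/2\leq(nN-1)/2$ gives $nN\leq 2n-3$, so $N\leq 1$, again absurd. This settles case~1. In case~3 the contradiction is immediate, since $n\geq\frac1p|G|-1$ already forces $|\bar G|\leq|G|/p\leq n+1<n+2\leq n(N-1)+2\leq|\bar G|$ with no structure theorem needed; cases~2 and~3 otherwise use the sharper corollaries of Theorem~\ref{thm-main-sumset} for the split groups $H\oplus C_{\exp(G)}$ and for cyclic groups; and case~4 is settled by directly examining the finitely many groups with $\exp(G)\leq 3$ or $|G|<10$.

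The principal obstacle is this last structural step: reconciling the Partition-Theorem setpartition $\{\bar A_i\}$ --- whose summands are \emph{almost} but not exactly equal --- with the single-set iterated-sumset description of Theorem~\ref{thm-main-sumset}; dealing with the periodic case of $nX$, and especially the degenerate situation in which $nX$ is an entire subgroup (for which Theorem~\ref{thm-main-sumset} gives nothing and one must argue separately via the coset condition), all without forfeiting the aperiodicity of $\Sigma_n(\bar S)$; unwinding alternative (v); and correctly identifying $\la X\ra_*$, rather than $H$, as the subgroup of $\bar G$ to test against the coset condition --- all while carefully tracking the defect parameter $e$ and the constraint $n\geq 2$ that $\mathsf h(S)\leq n$ imposes.
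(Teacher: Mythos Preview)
Your framework is correct and matches the paper's: apply the Partition Theorem, rule out its first alternative, obtain the inequality $(N-1)n+e+1<|G/H|$, use the coset condition to force $N\geq 2$, and then contradict the existence of the aperiodic set $X\subseteq G/H$ with $|nX|<|G/H|<n|X|$. The paper even derives $n\geq 3$ and handles the small cases $n=1,2$ exactly as you do.

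However, the paper resolves precisely your acknowledged ``principal obstacles'' via two tools you do not invoke. First, Lemma~\ref{lem-partition-extra} shows directly that either $\la Z\ra_*=H$ (forcing $|X|=1$) or $\la Z\ra_*=\la\supp(S)\ra_*=G$, whence $\la X\ra_*=G/H$; this eliminates the case $\la X\ra_*<\bar G$ \emph{before} any structural theorem is applied, so there is no need to ``test $\la X\ra_*$ against the coset condition'' or to worry about $nX$ filling a proper subgroup. Second, the paper applies Corollary~\ref{cor2} to $nX$ rather than Theorem~\ref{thm-main-sumset} itself: Corollary~\ref{cor2} already absorbs the passage to $\phi_{K}(X)$ when $K=\mathsf H(nX)$ is nontrivial and already unwinds the recursive alternative~(v), delivering the simple dichotomy ``$nX=G/H$ or $n=\exp(G/H)-1$ with $G/H$ non-cyclic and $\exp(G/H)$ composite,'' from which all four items follow in a line or two each.

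Your attempt to bypass these two tools leaves genuine gaps. Quotienting by $K=\mathsf H(nX)$ and then applying Theorem~\ref{thm-main-sumset} to $\phi_K(X)$ requires $|n\phi_K(X)|<(|\phi_K(X)|+1)n-3$, but you only control $|nX|\leq nN-2$ and $|\phi_K(X)|\leq N$, which does not give this after dividing by $|K|$; the elimination of alternatives (ii)--(iv) via ``$|nX|=|X|n$ forces $|nX|>nN-2$'' also breaks down once $X$ is replaced by $\phi_K(X)$. The recursive case~(v) cannot be ``unwound inductively on $|\la X\ra_*|$'' without essentially reproving Corollaries~\ref{cor1}--\ref{cor2}. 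And your final disposal of $\la X\ra_*<\bar G$ via $n(N-1)+1\leq|\bar G|/2$ only applies \emph{after} you have reduced to the arithmetic-progression case, so it cannot be used to reach that reduction in the first place. The fix is exactly what the paper does: prove Lemma~\ref{lem-partition-extra} to force $\la X\ra_*=G/H$, and package the structural analysis as Corollary~\ref{cor2}.
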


\section{Critical Pair Theory}\label{sec-CriticalPair-Theory}

We review the portions of Kemperman's Critical Pair Theory  needed for the paper. We begin with the following simple consequence of the Pigeonhole Principle \cite[Theorem 5.1]{Gbook}. Note, if $A$ and $B$ are each subsets of an $H$-coset with $|A|+|B|\geq |H|+1$, then Theorem \ref{PigeonHoleBound} (applied to $A$ and $B$ translated so that they are subsets of the subgroup $H$) ensures that $A+B$ is an $H$-coset.

\begin{theirtheorem}[Pigeonhole Bound]\label{PigeonHoleBound} Let $G$ be an abelian group and let $A,\,B\subseteq G$ be finite subsets. If $|A|+|B|\geq |G|+r$ with $r\geq 1$ an integer, then $A+B=G$ with $\mathsf r_{A+B}(x)\geq r$ for every $x\in G$.
\end{theirtheorem}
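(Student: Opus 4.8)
The statement is a one-step counting argument, so the plan is simply to reinterpret $\mathsf r_{A+B}(x)$ as the size of an intersection and apply the elementary pigeonhole principle for two subsets of a finite set. First observe that the hypothesis $|A|+|B|\geq |G|+r$ with $A,B$ finite and $r\geq 1$ forces $G$ to be finite, so $|G|$ is a well-defined positive integer throughout.

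Fix an arbitrary $x\in G$. By the definition of the representation function, $\mathsf r_{A+B}(x)=|(x-B)\cap A|$, where $x-B=\{x-b:\;b\in B\}$ is a translate of $B$ and hence has $|x-B|=|B|$. Both $A$ and $x-B$ are subsets of $G$, so by inclusion--exclusion
$$|(x-B)\cap A|=|A|+|x-B|-|A\cup(x-B)|\geq |A|+|B|-|G|\geq r.$$
Thus $\mathsf r_{A+B}(x)\geq r\geq 1$ for this $x$, and since $x\in G$ was arbitrary, every element of $G$ lies in $A+B$ with at least $r$ representations; that is, $A+B=G$ with $\mathsf r_{A+B}(x)\geq r$ for all $x\in G$, as claimed.

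There is no real obstacle here: the only point requiring a moment's care is the bookkeeping that a translate of $B$ has the same cardinality as $B$ and that $A\cup(x-B)\subseteq G$ gives $|A\cup(x-B)|\leq |G|$. The argument is exactly the remark already made in the text preceding Theorem~\ref{PigeonHoleBound} for the special case where $A$ and $B$ lie in a single $H$-coset, now carried out directly in $G$.
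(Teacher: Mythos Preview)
Your proof is correct and is precisely the standard inclusion--exclusion argument one would expect. Note that the paper does not actually supply a proof of this statement: it is quoted as a known result with a reference to \cite[Theorem 5.1]{Gbook}, so there is no paper-internal proof to compare against. Your argument is the canonical one and would serve perfectly well as a self-contained justification.
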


Given a subgroup $H\leq G$, subset $A\subseteq G$ and $x\in A$,
we call the subset $(x+H)\cap A\neq \emptyset$ an \emph{$H$-coset slice} of $A$. The set $A$ naturally decomposes into the disjoint union of it's $H$-coset slices,   $A=A_{1}\cup\ldots\cup A_{d}$ with each $A_{i}=(x_i+H)\cap A$ for  some $x_i\in A$. We call such a decomposition the \emph{$H$-coset decomposition} of $A$. Thus $\phi_H(A)=\{\phi_H(x_1),\ldots,\phi_H(x_d)\}$ with the elements $\phi_H(x_i)$ distinct.
If  $A=(A\setminus A_\emptyset)\cup A_\emptyset$ with  $A_\emptyset$  a nonempty subset of an $H$-coset and $A\setminus A_\emptyset$ $H$-periodic, then we call $A=(A\setminus A_\emptyset)\cup A_\emptyset$ an \emph{$H$-quasi-periodic decomposition} of $A$.
 Note this means (after re-indexing the terms in its $H$-coset decomposition) that $A_{i}=x_i+H$ for $i\in [2,d]$ and $A_{1}=A_\emptyset$.

Let $X,\,Y\subseteq G$ be finite and nonempty subsets with $H=\la X+Y\ra_*$.  We say that the pair $(X,Y)$ is \emph{elementary of  type} (I), (II), \ldots, (VII) or (VIII) if there are $z_A,\,z_B\in G$ such that $X=z_A+A$ and $Y=z_B+B$ for a pair of subsets $A,\,B\subseteq H$ satisfying the corresponding requirement below (with all complements relative to the subgroup $H$, so $\overline{A}=(H+A)\setminus A=H\setminus A$):
\begin{itemize}
\item[(I)]  $|A|=1$ or $|B|=1$
\item[(II)] $A$ and $B$ are arithmetic progressions of common difference $d\in H$ with $|A|,\,|B|\geq 2$ and $\ord(d)\geq |A|+|B|-1\geq 3$
\item[(III)] $|A|+|B|=|H|+1$ and there is precisely one unique expression element in the sumset $A+B$; in particular,   $A+B=H$
    \item[(IV)] $B=-\overline{A}$ and the sumset $A+B$ is aperiodic and contains no unique expression elements; in particular,   $A+B=A-\overline{ A}=H\setminus \{0\}$
        \item[(V)] $|A|=2$ or $|B|=2$, \quad and \quad $|A+B|=|A|+|B|$
        \item[(VI)]  $|A|=|B|=3$, \  $A=B$ \ and \ $|A+B|=|A|+|B|=6$
        \item[(VII)]  either $|A|=3$, $|2A|=6$, $B=-\overline{2A}$ and $A=-\overline{A+B}$, or else $|B|=3$, $|2B|=6$, $A=-\overline{2B}$  and $B=-\overline{A+B}$; in particular, $|A+B|=|A|+|B|=|H|-3$
\item[(VIII)]    there are subgroups $K_1,\,K_2,\,K< H$ with $K=K_1\oplus K_2\cong C_2\oplus C_2$ such that \begin{align*}&A=\Big(x_A+K_1\Big)\cup \Big (y+K\Big)\cup\ldots\cup \Big((r_A-1)y+K\Big)\cup \Big (r_Ay+K_2\Big) \und\\ &B=\Big(x_B+K_1\Big)\cup \Big (y+K\Big)\cup\ldots\cup \Big((r_B-1)y+K\Big)\cup \Big (r_By+K_2\Big)
                  \end{align*}
                  for some  $x_A,\,x_B\in K$, \ $y\in H\setminus K$, \ and $r_A,\,r_B\geq 1$ with $r_A+r_B+1\leq \ord(\phi_K(y))$; in particular, $|A+B|=|A|+|B|$
\end{itemize}

As is easily observed, $|A+B|=|A|+|B|-1$ when $(A,B)$ is an elementary pair of type (I), (II), (III) or (IV), and $|A+B|=|A|+|B|$ when $(A,B)$ is an elementary pair of type (V), (VI), (VII) or (VIII). These elementary pairs are the basic building blocks of all sumsets $A+B$ with $|A+B|\leq |A|+|B|$.

In view of Kneser's Theorem, the study of sumsets with $|A+B|\leq |A|+|B|$ reduces to the aperiodic case. This structure is fully characterized in \cite[Theorem 4.1, Corollary 4.2]{kst+1}. Combining this result with the ``dual'' formulation of the Kemperman Structure Theorem \cite[Theorem 9.2]{Gbook}, which characterizes the case when $|A+B|\leq |A|+|B|-1$, we can now summarize the relevant structural information we will need. We remark that the structural information given by Theorem \ref{KST-lemma} is fairly weak when $|A+B|\geq |G|-2$, though as a trade-off the sumset is quite large. We will be able to eliminate this case in Theorem \ref{thm-main-sumset} when passing to the iterated sumset $nA$, which will be a necessary step for deriving our generalization of Olson's result.

\begin{theirtheorem}\label{KST-lemma}
Let $G$ be a nontrivial abelian group and let $A,\,B\subseteq G$ be finite and nonempty subsets with $\la A+B\ra_*=G$. Suppose  $|A+B|\leq |A|+|B|$ and $A+B$ is  aperiodic. Then one of the following holds.
\begin{itemize}
\item[(i)] $(A,B)$ is an elementary pair of some type (I)--(II) or (IV)--(VIII).
\item[(ii)] $|A+B|=|A|+|B|\geq |G|-2$. Moreover, either $A-\overline{A+B}=-\overline {B}$ or  $A-\overline{A+B}=-\overline {B}\setminus \{x\}$ for some $x\in G$, where $\overline X=G\setminus X$, with the latter only possible if $|A+B|=|G|-1$.
\item[(iii)] There are arithmetic progressions $P_A,\,P_B\subseteq G$ of common difference such that $A\subseteq P_A$, \ $B\subseteq P_B$, and  $|P_A\setminus A|,\,|P_B\setminus B|\leq 1$.
\item[(iv)] There is a subgroup $H<G$ with $|H|=2$ such that \begin{align*}
&z_A+A=\{x_A\}\cup \Big(y+H\Big)\cup \ldots\cup \Big(r_Ay+H\Big)\cup \{(r_A+1)y\}\quad\und
\\ &z_B+B=\{x_B\}\cup \Big(y+H\Big)\cup \ldots\cup \Big(r_By+H\Big)\cup \{(r_B+1)y\},\end{align*}
for some   $z_A,\, z_B\in G$, \, $x_A,\,x_B\in H$, \  $y\in G\setminus H$ \ and \ $r_A,\,r_B\geq 1$ \ with \ $r_A+r_B+3\leq \ord(\phi_H(y))$.

\item[(v)]
 There is a nontrivial subgroup $H<G$  such that \begin{align*}&z_A+A=\{0\}\cup \Big(y+(H\setminus \{x\})\Big)\cup \Big(2y+H\Big)\cup \ldots\cup \Big(r_Ay+H\Big)\quad\und \\
 &z_B+B=\{0\}\cup \Big(y+(H\setminus \{x\})\Big)\cup \Big(2y+H\Big)\cup \ldots\cup \Big(r_By+H\Big),
 \end{align*} for some $z_A,\,z_B\in G$, \  $x\in H$, \  $y\in G\setminus H$ \ and \ $r_A,\,r_B\geq 1$  with $r+r'+1\leq \ord(\phi_H(y))$. Moreover,    $r_A,\,r_B\geq 2$ when $|H|=2$.
 \item[(vi)] There exists a proper, finite and nontrivial subgroup $H<G$ and nonempty subsets $A_\emptyset=(\alpha+H)\cap A$ and $B_\emptyset=(\beta+H)\cap B$, for some $\alpha,\,\beta\in G$, such that \begin{itemize}
    \item[(a)] $(\phi_H(A),\phi_H(B))$ is an elementary pair of some type (I)--(III),
     \item[(b)] $\phi_H(A_\emptyset)+\phi_H(B_\emptyset)$ is a unique expression element in $\phi_H(A)+\phi_H(B)$,
     \item[(c)] $|A\setminus A_\emptyset|\geq |H+(A\setminus A_\emptyset)|-1$ and $|B\setminus B_\emptyset|\geq |H+(B\setminus B_\emptyset)|-1$, with equality in either only possible when $|A+B|=|A|+|B|$,

    \item[(d)] $A_\emptyset+B_\emptyset$ is aperiodic with $-1\leq |A_\emptyset+B_\emptyset|-|A_\emptyset|-|B_\emptyset|\leq |A+B|-|A|-|B|$, and
\item[(e)]    $A+B=(A+B)\setminus (A_\emptyset+B_\emptyset)$ is  $H$-periodic.
\end{itemize}
         \end{itemize}
    %
    %
\end{theirtheorem}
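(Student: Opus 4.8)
The plan is to obtain Theorem \ref{KST-lemma} by invoking the two characterizations cited just before its statement according to a dichotomy on the size of $A+B$, and then reconciling the two resulting lists into the single list (i)--(vi).

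First suppose $|A+B|\le |A|+|B|-1$. Here one applies the dual form of the Kemperman Structure Theorem, \cite[Theorem 9.2]{Gbook}, to the aperiodic sumset $A+B$ with $\la A+B\ra_*=G$. That result produces a recursive description: either $(A,B)$ is an elementary pair of one of the types realizing $|A+B|=|A|+|B|-1$, namely (I), (II), (III) or (IV); or there is a proper nontrivial subgroup $H<G$ and $H$-coset slices $A_\emptyset=(\alpha+H)\cap A$, $B_\emptyset=(\beta+H)\cap B$ such that $\phi_H(A_\emptyset)+\phi_H(B_\emptyset)$ is a unique expression element of the aperiodic sumset $\phi_H(A)+\phi_H(B)$, with $(\phi_H(A),\phi_H(B))$ elementary of type (I)--(III), with $A\setminus A_\emptyset$ and $B\setminus B_\emptyset$ nearly $H$-periodic in the sense of (vi)(c), with $A_\emptyset+B_\emptyset$ aperiodic, and with $(A+B)\setminus(A_\emptyset+B_\emptyset)$ being $H$-periodic. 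The recursive branch is precisely item (vi), and I would then split off from it the two fully explicit families occurring when the recursion terminates after one step: the case $|H|=2$ with $\phi_H(A),\phi_H(B)$ arithmetic progressions, which unwinds to (iv), and the case where the quotient pair is of type (I) with $A_\emptyset=H\setminus\{x\}$, which unwinds to (v); item (iii) is just a type (II) elementary pair recorded with at most one hole in each progression. The type (III) elementary pairs force $A+B=G$, hence fall under the large-sumset item (ii); the type (I), (II), (IV) pairs feed directly into (i).

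Next suppose $|A+B|=|A|+|B|$. For this regime I would quote \cite[Theorem 4.1, Corollary 4.2]{kst+1}. Relative to the strict case this contributes the following. The elementary pairs of types (V), (VI), (VII) and (VIII) now occur and go into (i). The genuinely new ``large sumset'' possibility appears, in which $|A+B|\ge |G|-2$ and $A-\overline{A+B}$ equals either $-\overline B$ or $-\overline B\setminus\{x\}$, the latter only when $|A+B|=|G|-1$; this is item (ii), and one notes that \cite[Theorem 9.2]{Gbook} already supplies the analogous boundary case when $|A+B|=|G|-1=|A|+|B|-1$, which is why (ii) must be stated with the weak sumset hypothesis flagged in the remark preceding the theorem. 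Finally, the equality $|A+B|=|A|+|B|$ relaxes several of the inequalities in the recursive family and its specializations: equality becomes admissible in (vi)(c) and the upper bound in (vi)(d) becomes $0$, and correspondingly the forms with $r_A,r_B\ge 2$ in (v) and $r_A+r_B+3\le\ord(\phi_H(y))$ in (iv) are the ones appropriate to the extra unit of sumset. I would verify that every structure listed in \cite{kst+1} lands in one of (i)--(vi) and that nothing is double counted.

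The merge is then immediate: items (i) and (vi) in the two source lists have the same shape, so their union is just (i)--(vi), with the ``equality only possible when $|A+B|=|A|+|B|$'' qualifiers recording the provenance of each fine structure. The main obstacle is the bookkeeping at the interface of the two regimes: one must check that the boundary case $|A+B|\in\{|G|-2,|G|-1\}$ is handled consistently by the two sources, that item (vi) inherits exactly the two-sided bound $-1\le |A_\emptyset+B_\emptyset|-|A_\emptyset|-|B_\emptyset|\le |A+B|-|A|-|B|$ with the correct endpoints coming from each source, and that the explicit families (iii)--(v) are genuinely the one-step unwindings of (vi) rather than separate phenomena --- choosing how far to carry that unwinding is the only real judgement call. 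Everything else is a faithful translation of the cited theorems into the elementary-pair vocabulary fixed earlier in this section.
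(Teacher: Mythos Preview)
Your proposal is correct and matches the paper's approach: the paper does not give a proof of this theorem at all, but simply states that it is obtained by combining \cite[Theorem 4.1, Corollary 4.2]{kst+1} (for the case $|A+B|=|A|+|B|$) with the dual Kemperman Structure Theorem \cite[Theorem 9.2]{Gbook} (for $|A+B|\le |A|+|B|-1$), exactly as you outline. One small correction to your commentary: item (iii) is not a type (II) elementary pair but rather the ``near--arithmetic-progression'' family arising from the $|A+B|=|A|+|B|$ characterization in \cite{kst+1}, and items (iv)--(v) are likewise explicit families from that source rather than unwindings of (vi); but this does not affect the correctness of your overall plan.
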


\section{Iterated Sumsets}\label{sec-itterated-sumsets}

The goal of this section is to derive improved structural information when  $\la A\ra_*=G$ and $|nA|<\min\{|G|,\,(|A|+1)n-3\}$ with $n\geq 3$.  The behaviour of $nA$ when $|A|\leq 2$ is rather straightforward, since in this case $A$ is an arithmetic progression. We begin with the first nontrivial case: $|A|=3$. We remark that most of the difficulty for Lemma \ref{lemma-itt-kst-size3} is dealing with the case when $|nA|=|G|-1$. Since $nA\neq G$ in these cases, having a \emph{structural} description of those sets $A$ which just fail to have maximal size sumset will be rather crucial for the later application generalizing Olson's result.

\begin{lemma}\label{lemma-itt-kst-size3}
Let $G$ be an  abelian group,  let $A\subseteq G$ be a subset with $\la A\ra_*=G$ and $|A|=3$, and let $n\geq 3$ be an integer. Suppose \be\label{bound-size3}|nA|<\min\{|G|,\,(|A|+1)n-3\}=\min\{|G|,\,4n-3\}.\ee
Then $nA$ is aperiodic and one of the following holds.
\begin{itemize}

\item[(i)] There is an arithmetic progression $P\subseteq G$ such that $A\subseteq P$ and  either
     \begin{itemize}
    \item[(a)] $3\leq |P|\leq 4$ (in which case   $|nA|=2n+1$, $|nA|=3n$ or $3n-1=|nA|=|G|-1$),
    \item[(b)] $|P|=5$ and  $4n-5\leq |nA|=|G|-1\leq 4n-4$, or
    \item[(c)] $|P|=6$ and $4n-4=|nA|=|G|-1=20$.
     \end{itemize}
\item[(ii)] There is an $H$-coset decomposition $A=\{x,\,z\}\cup \{y\}$ with $\la x-z\ra=H$ such that either
    \begin{itemize}
    \item[(a)] $2\leq |H|\leq 3$ (in which case $|nA|=2n+1$,  $|nA|=3n$ or $3n-1=|nA|=|G|-1$),
    \item[(b)] $|H|=4$ and $4n-5=|nA|=|G|-1$, or
    \item[(c)] $|H|=|G/H|=5$ and $4n-4=|nA|=|G|-1=24$.
    \end{itemize}
 \item[(iii)]  $G\cong C_2\oplus C_{\exp(G)}$  with $4\mid \exp(G)$ and  there is an $H$-coset decomposition $A=\{x,z\}\cup \{y\}$ with $\la x-z\ra=H$  such that $|G/H|=2$, \ $2(y+z)=4x$ \ and \ $4n-5=|nA|=|G|-1$.
\item[(iv)] $G\cong\Z/m\Z$, \ $8\nmid m$, \ $4n-5=|nA|=|G|-1$ \ and \ $A=\{0,1,\frac{m}{2}-1\}$ up to affine transformation.
 \end{itemize}
\end{lemma}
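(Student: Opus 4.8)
The plan is to analyze the structure of $A=\{a_1,a_2,a_3\}$ by reducing to the sumset case via the identity $nA=(n-1)A+A$, and more usefully by analyzing the intermediate sumsets $kA$ for small $k$. The starting observation is that $\la A\ra_*=G$ with $|A|=3$ forces $G$ to be generated by two elements, namely (after translating so $0\in A$) the two nonzero elements of $A$; so $G\cong \Z/m\Z$ or $G\cong C_2\oplus C_{\exp(G)}$ is \emph{not} forced, but $G$ is at most $2$-generated, which already restricts things considerably. First I would dispose of the degenerate cases: if $2A$ is periodic, then its stabilizer $H$ is nontrivial, and since $|2A|\le 4n-3$ with $n$ unconstrained we cannot conclude much directly, so instead I would run Kneser's Theorem on $2A=A+A$ to get $|2A|\ge 2|A+H|-|H|$ and then bootstrap: periodicity of $2A$ propagates to periodicity of $nA$ for $n\ge 2$, contradicting the hypothesis that we want to prove $nA$ aperiodic — so actually the first real task is to \emph{prove} $nA$ is aperiodic, which I would do by assuming $H:=\mathsf H(nA)\ne 0$, passing to $\phi_H$, and deriving $|nA|\ge |H|\cdot|n\phi_H(A)|\ge |H|(\ldots)$ via Kneser applied in $G/H$, then checking this violates \eqref{bound-size3} unless $|G|$ is hit, but $|nA|<|G|$ by hypothesis — the arithmetic here is the first place care is needed.

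Next, with $nA$ aperiodic, I would apply Theorem \ref{KST-lemma} (or rather its iterated consequence, but since this Lemma is the base case feeding Theorem \ref{thm-main-sumset}, I must argue directly) to the pair $((n-1)A, A)$, or better yet, induct on $n$: establish the structure for $n=3$ by hand using $3A=2A+A$ together with the Kemperman classification of $2A+A$, and then show the structure is preserved as $n$ increases. The key case split mirrors the conclusion: (i) $A$ sits in an arithmetic progression $P$, handled by the classical Freiman-type bound $|nA|\ge n|P|-(n-1)\cdot(\text{something})$ sharpened by the exact count for progressions, with the sub-cases $|P|=3,4,5,6$ forced by comparing $4n-3$ against $n(|P|-1)+1$; (ii)–(iv) are the ``two coset slices'' configurations, where $\phi_H(A)$ is a two-element set (an AP of length $2$) and one slice has size $2$, the other size $1$. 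For (ii) I would track how $nA$ decomposes along $H$-cosets: the image $n\phi_H(A)$ is an interval of length $\le 2n+1$ in $G/H$, and over each interior coset $nA$ is either full or missing exactly the predictable holes coming from the size-$2$ slice, giving $|nA|\in\{2n+1, 3n, 3n-1\}$ unless $|H|$ is small enough ($|H|\le 4$) that the ``$|G|-1$'' boundary phenomenon from Theorem \ref{KST-lemma}(ii) intrudes.

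The genuinely delicate part — and where I expect the main obstacle — is the ``$|nA|=|G|-1$'' borderline, i.e. cases (i)(b), (i)(c), (ii)(b), (ii)(c), (iii), (iv). Here $nA$ is a coset minus one point, and the weak structural output of Theorem \ref{KST-lemma}(ii) (only saying $A-\overline{A+B}$ is a near-complement) is not enough; one must instead exploit that $nA=G\setminus\{g\}$ for a \emph{single} missing element $g$ and work backwards. The strategy: $g\notin nA$ means $g$ has \emph{no} representation as a sum of $n$ terms from $A$, which is an extremely rigid condition when $|A|=3$ — writing $A=\{0,a,b\}$ after translation, $g\notin nA$ says $ia+jb\ne g$ for all $i,j\ge 0$ with $i+j\le n$; combined with $|(n-1)A|$ being one less than $(n-1)|A|$-ish (so $(n-1)A$ is \emph{almost} everything), this pins down $a,b$ up to the affine normalizations listed, e.g. $A=\{0,1,\frac m2-1\}$ in $\Z/m\Z$. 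I would handle this by a direct counting/extremal argument: parametrize the possible ``defect'' sequences, show the defect $|G|-|nA|$ can stay bounded (equal to $1$) for all large $n$ only in the listed configurations, using that otherwise the defect grows linearly in $n$ (a monotonicity/Plünnecke-type argument for the complement). The small-group coincidences ($|G|=20, 24, 16$, the condition $8\nmid m$, the condition $4\mid\exp(G)$) will emerge from finitely many residue computations at the end, which I would not grind through here but would organize as: reduce to $G$ cyclic or $G\cong C_2\oplus C_{2k}$, then solve the relevant Diophantine constraints modulo $|G|$.
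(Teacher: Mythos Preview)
Your plan has the right shape at the coarse level (aperiodicity via Kneser, then Kemperman-type structure), but the central mechanism of the paper's proof is missing, and your substitutes do not work. The paper does \emph{not} apply Theorem~\ref{KST-lemma} only to $((n-1)A,A)$, nor does it induct on $n$; it applies Theorem~\ref{KST-lemma} to \emph{every} pair $((k-1)A,A)$ for $k\in[2,n]$ and records, for each $k$, which elementary types can occur. The key observation is that types (IV), (VII), and the situation $|kA|\ge|G|-2$ each force $|kA|$ within $3$ of $|G|$, so (using $|nA|<|G|$ and the Pigeonhole Bound) they can occur only for $k\in\{n-1,n\}$; type (VI) forces $k=2$; and the remaining types are excluded once (i)(a)/(ii)(a) are assumed to fail. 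Writing $|kA|=|(k-1)A|+4+\epsilon_k$, this gives $\epsilon_2=-1$, $\epsilon_k\ge 0$ for $3\le k\le n-2$, $\epsilon_{n-1}\ge -1$, $\epsilon_n\ge -2$, and summing yields $|nA|\ge 4n-5$, hence $|nA|\in\{4n-5,4n-4\}$ with $|nA|=|G|-1$ and $|G|\equiv 0,1\pmod 4$. Your proposed induction on $n$ cannot reproduce this: the hypothesis $|nA|<4n-3$ does \emph{not} imply $|(n-1)A|<4(n-1)-3$, so there is no inductive step; and applying Kemperman only at $k=n$ tells you about the structure of $(n-1)A$, not of $A$.

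For the borderline $|nA|=|G|-1$, your ``single missing element is rigid, work backwards'' idea is not developed and is not what the paper does. After the $\epsilon_k$ analysis pins down $|nA|$ and $|G|\bmod 4$, the paper first extracts from the failure of (ii)(a)/(ii)(b) that $\ord(a-b)\ge\max\{5,|G|/4\}$ for all distinct $a,b\in A$, then normalizes (to $A=\{0,1,y\}$ in the cyclic case, or $A=\{(0,0),(0,1),(1,x)\}$ in the rank-$2$ case), and \emph{explicitly computes} $4A$ and $5A$: the constraint $\sum_{i=2}^{n-1}\epsilon_i\le -1$ forces $|4A|<15$ or $|5A|\le 19$, which translates into a finite list of congruence conditions on $y$ (resp.\ $x$), each of which is checked against (i)--(iv). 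Your reduction to ``$G$ cyclic or $G\cong C_2\oplus C_{2k}$'' skips a step --- a priori $G\cong \Z/m'\Z\times\Z/m\Z$ with $m'\in\{2,3,4\}$, and the cases $m'=3,4$ must be (and are) eliminated by the same $4A$/$5A$ computation. The small-group numerology you defer is precisely where the work lies.
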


\begin{proof}
Suppose $nA$ is periodic, say with $H=\mathsf H(nA)$ nontrivial. Since $|nA|<|G|$, $H$ must be a proper subgroup. Thus, since $\la A\ra_*=G$, we have $2\leq |\phi_H(A)|\leq 3$.  Kneser's Theorem ensures that $|nA|\geq (n|\phi_H(A)|-n+1)|H|$. Thus, if $|\phi_H(A)|=3$, then the bound becomes $|nA|\geq (2n+1)|H|\geq 4n+2$, contrary to hypothesis. Therefore $|\phi_H(A)|=2$, implying we have an $H$-coset decomposition $A=A_1\cup A_0$ with $|A_0|=1$. Thus, since $\phi_H(A)$ is an arithmetic progression of size $2$ and $\la \phi_H(A)\ra_*=G/H$, the only way $nA$ can be $H$-periodic is if $n\geq |G/H|$, in which case  $|nA|\geq (n|\phi_H(A)|-n+1)|H|=(n+1)|H|>|G|$, which is not possible. So we instead conclude that $nA$ is aperiodic.

If (i)(a) holds with $|P|=3$, say w.l.o.g. $A=\{0,x,2x\}$, then $nA=\{0,x,\ldots, (2n)x\}$, so $|nA|=2n+1$ as $nA\neq G=\la A\ra_*$. If (i)(a) holds with $|P|=4$, say w.l.o.g. $A=\{0,x,3x\}$, then  $nA=\{0,x,\ldots, (3n-2)x\}\cup \{(3n)x\}$. Then, since $nA\neq G= \la A\ra_*$, we either have $(3n)x\neq 0$, in which case $|nA|=3n$, or else $(3n)x=0$, in which case $|nA|=|G|-1=3n-1$. If (ii)(a) holds with $|H|=2$, say w.l.o.g. $A=H\cup \{y\}$ with $\la \phi_H(y)\ra =G/H$, then $nA=H\cup \Big(y+H\Big)\cup \ldots \cup \Big((n-1)y+H\Big)\cup \{ny\}$, so $|nA|=n|H|+1=2n+1$. If (ii)(a) holds with $|H|=3$, say $A=(H\setminus \{x\})\cup \{y\}$, then $nA=H\cup \Big(y+H\Big)\cup \ldots\cup \Big( (n-2)y+H\Big)\cup \Big ((n-1)y+(H\setminus \{x\})\Big)\cup \{ny\}$. Then, since $nA\neq G= \la A\ra_*$, we either have $ny\notin H$, in which case $|nA|=n|H|=3n$, or else $ny\in H$, in which case $|nA|=|G|-1=n|H|-1=3n-1$.

We may assume by contradiction that neither (i)(a) nor  (ii)(b) hold for $A$.
Let $k\in [2,n]$. The pair $((k-1)A,A)$ cannot be an elementary of type (I) since $|(k-1)A|\geq |A|=3>1$. The pair $((k-1)A,A)$ cannot be elementary of type (II) as then (i)(a) holds for $A$. The pair $((k-1)A,A)$ cannot be elementary of type (III) as this contradicts that $nA$ is aperiodic. If $((k-1)A,A)$ is elementary of type (IV), then $|kA|=|(k-1)A|+|A|-1=|(k-1)A|+2=|G|-1$, which is only possible if $k=n\geq 3$, as otherwise Theorem \ref{PigeonHoleBound} implies that $G=(k+1)A\subseteq nA$, contradicting that $|nA|<|G|$. The pair $((k-1)A,A)$ cannot be an elementary of type (V) since $|(k-1)A|\geq |A|=3>2$. If $((k-1)A,A)$ is elementary of type (VI), then $|(k-1)A|=|A|=3$, which is only possible for $k=2$, as otherwise $|(k-1)A|\geq |2A|\geq 2|A|-1=5$, with  the latter inequality in view  of Kneser's Theorem since $nA$ is  aperiodic. If $((k-1)A,A)$ is elementary of type (VII), then $|kA|=|(k-1)A|+|A|=|G|-3$, which is only possible if $k\in [n-1,n]$ since otherwise Kneser's Theorem  implies $|nA|\geq |kA+A+A|\geq |kA|+2|A|-2=|G|-3+4=|G|+1$, which is impossible. The pair $((k-1)A,A)$ cannot be elementary of type (VIII) since $|A|=3<4$. If Theorem \ref{KST-lemma}(ii) holds for  $((k-1)A,A)$, then $|kA|=|G|-2=|(k-1)A|+|A|$, which is only possible for $k=n$ as otherwise Kneser's Theorem implies $|nA|\geq |kA+A|\geq |kA|+|A|-1=|G|$, contradicting that $nA$ is aperiodic.  Theorem \ref{KST-lemma}(iii) cannot hold for $((k-1)A,A)$ as then (i)(a) holds for $A$.
Theorem \ref{KST-lemma}(iv) cannot hold for $((k-1)A,A)$ as $|A|=3<4$. Theorem \ref{KST-lemma}(v) cannot hold  for $((k-1)A,A)$ as then (ii)(a) holds for $A$ in view of $|A|=3$. Theorem \ref{KST-lemma}(vi) cannot hold for $((k-1)A,A)$ as otherwise Theorem \ref{KST-lemma}(v)(c)  combined with $|A|=3$ implies that (ii)(a) holds for $A$.

In view of the possibilities listed above, Theorem \ref{KST-lemma} implies  that $|kA|\geq |(k-1)A|+|A|=|(k-1)A|+3$ for $k\in [2,n]$ unless $((k-1)A,A)$ is elementary of type (IV), in which case $|kA|=|(k-1)A|+|A|-1=|(k-1)A|+2=|G|-1$ and $k=n$.  In particular, $|2A|=2|A|=6$ (since $|2A|\leq \frac12|A|(|A|+1)$ is a trivial upper bound).
%
%
Likewise,  $|kA|\geq |(k-1)A|+|A|+1=|(k-1)A|+4$ for $k\in [2,n]$ except when $((k-1)A,A)$ is elementary of type (IV), (VI) or (VII), or when $|(k-1)A+A|=|(k-1)A|+|A|\geq |G|-2$.  Type (VI) is only possible for $k=2$,  type (IV) and the fourth possibility can only occur for $k=n$, and type (VII) is only possible for $k\in [n-1,n]$ and implies $|kA|=|G|-3$.

For $k\geq 2$, let $\epsilon_k$ be the integer such that  $|kA|= |(k-1)A|+4+\epsilon_k$. Thus
\be\label{3card-form}|nA|=4n-1+\Sum{i=2}{n}\epsilon_i.\ee In view of the above work, we have \be\label{whitetrain}\epsilon_2=-1,\quad  \epsilon_k\geq 0\; \mbox{ for all $k\in [3,n-2]$},\quad  \epsilon_{n-1}\geq -1,\quad\und\quad\epsilon_n\geq -2.\ee Moreover, $\epsilon_n=-2$ is only possible if $|nA|=|G|-1$; and $\epsilon_{n-1}= -1$ with $n-1>2$ is only possible if $|(n-1)A|=|G|-3$, in which case $|nA|=|G|-1$ with $\epsilon_n=-2$ necessarily following in view of $|nA|<|G|$. It is now clear that the hypothesis $|nA|<4n-3$ is only possible if $|G|$ is finite with $\epsilon_n=-2$ and $|nA|=|G|-1$, which we now assume. Moreover, we must either have $|nA|=4n-4=|G|-1$ or $n|A|=4n-5=|G|-1$, ensuring that $$|G|\equiv 0\mbox{ or }1\mod 4.$$ In view of \eqref{3card-form} and \eqref{bound-size3}, we have $$\Sum{i=2}{n-1}\epsilon_i\leq -1.$$

Suppose there is an $H$-coset decomposition $A=A_1\cup A_0$. Then w.l.o.g $|A_1|=2$ and $|A_0|=1$. Moreover, $A=A_1\cup A_0$ is also a $\la A_1\ra_*$-coset decomposition, so we may w.l.o.g. assume $H=\la A_1\ra_*$. Since (ii)(a) is assumed not to hold, we must have $|H|\geq 4$. Suppose $|H|=4$. If $|G/H|=2$, then $|G|=2|H|=8$ and  $|2A|=6=|G|-2$, in which case $|3A|=|G|$ in view of Theorem \ref{PigeonHoleBound}, contrary to hypothesis. Therefore we may assume $|G/H|\geq 3$. But now it is clear that $|kA|= |(k-1)A|+4$ when $3\leq k\leq \frac14|G|-1$, that   $|kA|=|(k-1)A|+3=|G|-3$ for $k=\frac14|G|$, and that  $|kA|=|(k-1)A|+2=|G|-1$ for $k=\frac14|G|+1$, and thus (ii)(b) follows. So we may assume $|H|\geq 5$.
 If we also have $|G/H|\geq 5$, then $\epsilon_2=-1$, $\epsilon_3=0$ and $\epsilon_4=1$. Moreover, $\epsilon_5\geq 1$ unless $|G/H|=|H|=5$. However, if $|G/H|=|H|=5$, then we instead have $|G|=|G/H||H|=25$,  $\epsilon_5=0$, $\epsilon_6=-1$, and $\epsilon_7=-2$ with $|7A|=|G|-1$. In this case, (ii)(c) follows in view of \eqref{3card-form}. On the other hand, if we instead have $\epsilon_5\geq 1$, then  \eqref{3card-form} and \eqref{whitetrain} ensure  that $|nA|\geq 4n-3$, contrary to hypothesis. So we now conclude that if there is an $H$-coset decomposition $A=A_1\cup A_0$, then $|H|\geq 5$ and $|G/H|\leq 4$. In particular, considering $H=\la a-b\ra$ with $a,\,b\in A$ gives (note $\la a-b\ra=\la A\ra_*=G$ if $c\in \la a-b\ra_*+b$) \be\ord(a-b)\geq\max\{5,|G|/4\}\quad\mbox{ for all distinct $a,\,b\in A$}\label{ringraft}.\ee

Suppose there is an $H$-coset decomposition $A=\{x,z\}\cup \{y\}$ with $\la x-z\ra=H\leq  G$ a subgroup such that $|G/H|=2$ and $2(y+z)=4x$. By translating by $-z$, we can w.l.o.g. assume $A=\{x,0\}\cup \{y\}$ with $2y=4x$. Since $|G/H|=2$, we must have $|G|$ even, whence $|nA|=4n-5=|G|-1$ as noted above, ensuring that $|G|$ is divisible by $4$. If $G$ were cyclic, then $2y=4x$ combined with $|G|\equiv 0\mod 4$ and $|G/H|=2$ ensures that $y\in \la x\ra=H$, in which case $\la A\ra_*=\la 0,y,x\ra=\la x\ra=H<G$, contradicting the hypothesis $\la A\ra_*=G$. Therefore, since $\la x\ra=H$ is an index $2$ subgroup, we must have $G\cong C_2\times C_{\exp(G)}$. It remains to show $4\mid \exp(G)$, which in view of $4(n-1)=|G|=2\exp(G)$ is equivalent to $n$ being odd, and then (iii) will follow. To see this, we have only to note that \begin{align*}&kA=\{0,x,\ldots,(2k-1)x\}\cup y+\{0,x,\ldots,(2k-4)x,(2k-2)x\}&&\mbox{for $k\geq 3$ odd and }\\
&kA=\{0,x,\ldots,(2k-2)x,(2k)x\}\cup y+\{0,x,\ldots,(2k-3)x\}&&\mbox{for $k\geq 2$ even}.\end{align*} Consequently, since $|nA|=|G|-1$, we must either have $n$ odd with  $2n-2=|H|=\frac12|G|$ (note $|H|=\frac12|G|$ is even since $4$ divides $|G|$) or else $n$ is even with $2n=|H|=\frac12|G|$. Since $|nA|=4n-5=|G|-1$, only the former is possible, and (iii) follows. So we can now assume no such $H$-coset decomposition exists.

By translating $A$ appropriately, we can w.l.o.g. assume $0\in A$,  in which case $\la A\ra=\la A\ra_*=G$ ensures that $G$ is generated by the two non-zero elements of $A=\{0,a,b\}$. Thus $G$ has rank at most $2$.

\subsection*{Case A} $G$ is noncyclic, say   $G\cong \Z/m'\Z\times \Z/m\Z$ with $2\leq m'\mid m$.

Suppose both nonzero elements $a,\,b\in A$ have order less than $\exp(G)=m$. Then we have $\ord(a),\,\ord(b)\leq \frac{|G|}{2m'}$, and we conclude that $m'=2$ and $\ord(a)=\ord(b)=|G|/4=m/2$ in view of \eqref{ringraft}. However, since any element of order $m/2$ in $G=\Z/2\Z\times \Z/m\Z$ must have an even second coordinate, this contradicts that $\la a,b\ra=\la A\ra=G$. So we can instead assume some element in $A$ has order equal to $\exp(G)=m$.

Any element $g\in G$ with $\ord(g)=\exp(G)$ generates a subgroup which is a direct summand in $G$. Thus we can w.l.o.g. assume $A=\{(0,0), (0,1), (1,x)\}$ with $0\leq x\leq \frac 12m$. In view of \eqref{ringraft} applied with $a=(0,1)$ and $b=(0,0)$, we conclude that $m=\exp(G)\geq 5$ and $m'\leq 4$. Thus $m\geq 6$ (since $m'\mid m$ with $m'\in [2,4]$ and $m\geq 5$), and since $|G|\equiv 0$ or $1$ modulo $4$, we conclude that either $m$ is even,  or else $m\equiv -1\mod 4$ with $m'=3$.
In view of \eqref{ringraft} applied with $a=(1,x)$ and $b=(0,0)$, we conclude that  $\ord((1,x))=m$ or $\frac{m}{2}$, with $\ord((1,x))=\frac{m}{2}$ only possible if $m'=2$. Likewise applying \eqref{ringraft} with $a=(1,x)$ and $b=(0,1)$, we conclude that $\ord((1,x-1))=m$ or $\frac{m}{2}$, with the latter only possible when $m'=2$.

Now $A\subseteq 2A\subseteq 3A\subseteq 4A$ with \begin{align*}4A=\{(0,0),(0,1),(0,2),(0,3),(0,4)\}\cup \{(1,x),(1,x+1),(1,x+2),(1,x+3)\}\cup\\ \{(2,2x),(2,2x+1),(2,2x+2)\}\cup \{(3,3x),(3,3x+1)\}\cup\{(4,4x)\}.\end{align*} Thus $|4A|\leq 15$, and in view of $m\geq 6$, it follows that the five groupings of elements above each consist of distinct elements. Consequently, if $|4A|<15$, then we  must have $2x\equiv -2,-1,0,1,2,3\mbox{ or } 4\mod m$ with $m'=2$, or $3x\equiv -1,0,1,2,3\mbox{ or } 4\mod m$ with $m'=3$, or $4x\equiv 0,1,2,3\mbox{ or } 4\mod m$ with $m'=4$ or $2$.
 Since $m'\mid m$ and $0\leq x\leq \frac{m}{2}$, it follows that either $x=0,1,2,\frac{m}{2},\frac{m}{2}-1,\frac{m}{4},\frac{m}{4}+1\mbox{ or } \frac{m+2}{4}$ with $m'=2$, or $x=0,1,\frac{m}{3},\frac{m}{3}+1$ with $m'=3$, or $x=0,\frac{m}{4},\frac{m}{2},1\mbox{ or }\frac{m}{4}+1$ with $m'=4$. When  $m'=4$, $x=0,\frac{m}{4}\mbox{ or } \frac{m}{2}$ implies $\ord((1,x))\leq 4<m$, while $x=1\mbox{ or }\frac{m}{4}+1$ implies $\ord((1,x-1))\leq 4<m$, both contradictions to what was shown above. When $m'=3$, $x=0\mbox{ or }\frac{m}{3}$ implies $\ord((1,x))=3<m$, while $x=1\mbox{ or } \frac{m}{3}+1$ implies $\ord((1,x-1))=3<m$, both contradictions to what was shown above.
 When $m'=2$, $x=0$ or $\frac{m}{2}$ implies $\ord((1,x))=2<3\leq \frac{m}{2}$; $x=1$ implies $\ord((1,x-1))=2<3\leq \frac{m}{2}$; $x=\frac{m}{4}$ implies either $x=2$ or else  $m\geq 12$ and $\ord((1,x))=4<6\leq \frac{m}{2}$; and $x=\frac{m}{4}+1$ implies either $x=\frac{m}{2}-1$ or else $m\geq 12$ and $\ord((1,x-1))=4<6\leq \frac{m}{2}$. Thus we obtain contradictions in all cases except for $m'=2$ with $x=2,\frac{m}{2}-1\mbox{ or }\frac{m+2}{4}$. If $x=\frac{m+2}{4}$, then we find that \begin{align*}&kA=\{(0,0),(0,1),\ldots,(0,k)\}\cup \{(0,\frac{m}{2}+1),\ldots,(0,\frac{m}{2}+k-1)\}\cup\\ &\{(1,\frac{m+2}{4}),\ldots,(1,\frac{m+2}{4}+k-1)\}\cup \{(1,\frac{3m+6}{4}),\ldots,(1,\frac{3m+6}{4}+k-3)\}\end{align*}
 for $k\geq 3$. Thus $|kA|=4k-2$ for $k\in [3,\frac{m}{2}]$, while $kA=G$ for $k=\frac{m}{2}+1$, contrary to \eqref{bound-size3}. If $x=2$, then $2(1,x)=(0,4)=4(0,1)$. Likewise, if $x=\frac{m}{2}-1$, then translating all terms by $(0,-1)$ yields $A=\{(0,0),(0,-1),(1,\frac{m}{2}-2)\}$ with $2(1,\frac{m}{2}-2)=(0,-4)=4(0,-1)$. Thus $A$ has an $H$-coset decomposition satisfying the requirements of (iii), yielding the full conclusion contained in (iii) as shown earlier. So we can now assume $|4A|=15$, and thus also that $|2A|=6$ and $|3A|=10$, since $2A\subseteq 3A\subseteq 4A$ with $4A$ only able to achieve its maximal  value $15$ if $|2A|$ and $|3A|$ also achieve their maximal values. It follows that $\epsilon_2=-1$, $\epsilon_3=0$ and $\epsilon_4=1$. In consequence, in view of \eqref{3card-form} and \eqref{whitetrain}, we find that we must have $n\geq 6$ with $|nA|=4n-4=|G|-1$, forcing $m'=3$. Moreover, we must have $n_5\leq 0$, as otherwise  \eqref{3card-form} and \eqref{whitetrain} ensure that $|nA|\geq 4n-3$, contrary to hypothesis.

 Since $n_5\leq 0$ with $|4A|=15$, we conclude that $|5A|\leq 19$. Now
\begin{align*}
&5A=\{(0,0),(0,1),(0,2),(0,3),(0,4),(0,5)\}\cup \{(1,x),(1,x+1),(1,x+2),(1,x+3),(1,x+4)\}\\&\cup \{(2,2x),(2,2x+1),(2,2x+2),(2,2x+3)\}\\&\cup \{(3,3x),(3,3x+1),(3,3x+2)\}\cup\{(4,4x),(4,4x+1)\}\cup \{(5,5x)\}.
\end{align*}
If all the elements listed above are distinct, then $|5A|=21$, contrary to what we concluded above. Thus, in view of $m=\exp(G)\geq 6$ and $m'=3$, we must have $3x\equiv -2,-1,0,1,2,3,4\mbox{ or }5\mod m$. Since $3=m'\mid m$, this is only possible if $3x\equiv 0\mbox{ or } 3\mod m$, both of which would contradict that $|4A|=15$. This completes the case when $G$ is non-cyclic.

\subsection*{Case B} $G=\Z/m\Z$ is cyclic.

Note $m=|G|\equiv 0\mbox{ or }1 \mod 4$ and $6=|2A|\leq |G|-3=m-3$ (the upper bound follows else Theorem \ref{PigeonHoleBound} implies that $3A=G$), ensuring that $m\geq 9$.

\subsection*{Case B.1} There is some generating element for $G$ contained in $A-A$.

In this subcase, by applying an appropriate affine transformation, we can w.l.o.g. assume $A=\{0,1,y\}$ with $2\leq y\leq \frac{m}{2}$ (if $y=\frac{m+1}{2}$, then $m$ is odd and the affine transformation $x\mapsto 2x$ yields $A=\{0,1,2\}$; otherwise, apply the affine transformation $x\mapsto -x+1$ when $y\geq \frac{m}{2}+1$). Moreover, since we have assumed (i)(a) does not hold, we must have $4\leq y\leq \frac{m}{2}$.
Now $A\subseteq 2A\subseteq 3A\subseteq 4A$ with \begin{align*}4A=[0,4]\cup (y+[0,3])\cup (2y+[0,2])\cup (3y+[0,1])\cup \{4y\}.\end{align*}
Thus $|4A|\leq 15$, and in view of $m\geq 9$, it follows that the five groupings of elements above each consist of distinct elements.
Consequently, if $|4A|<15$, then we  must have $y\equiv t\mod m$ for some $t\in [-3,4]$, or $2y\equiv t\mod m$ for some $t\in [-2,4]$, or $3y\equiv t\mod m$ for some $t\in [-1,4]$, or $4y\equiv t\mod m$ for some $t\in [0,4]$. Hence, since $m\equiv 0\mbox{ or } 1\mod 4$ with $4\leq y\leq \frac{m}{2}$ and $m\geq 9$, we conclude that $y\in \{4,\frac{m}{2}-1,\frac{m-1}{2},\frac{m}{2},\frac{m-1}{3},\frac{m}{3},
\frac{m+1}{3},\frac{m+2}{3},\frac{m}{3}+1,\frac{m+4}{3},\frac{m}{4},
\frac{m+3}{4},\frac{m}{4}+1\}.$ If $y=4$, then $P=\{0,1,2,3,4\}$ shows that  (i)(b) holds; if $y=\frac{m-1}{2}$, then (i)(a) holds with $P=\{\frac{m-1}{2},0,\frac{m+1}{2},1\}$; if $y=\frac{m}{2}$, then $A=\{0,\frac{m}{2}\}\cup \{1\}$ is an $H$-coset decomposition satisfying (ii)(a); if $y=\frac{m-1}{3}$, then $P=\{\frac{m-1}{3},0,\frac{2m+1}{3},\frac{m+2}{3},1\}$ shows that (i)(b) holds; if $y=\frac{m}{3}$, then $A=\{0,\frac{m}{3}\}\cup \{1\}$ is an $H$-coset decomposition satisfying (ii)(a); if $y=\frac{m+1}{3}$, then $P=\{0,\frac{m+1}{3},\frac{2m+2}{3},1\}$ shows that (i)(a) holds; if $y=\frac{m+2}{3}$, then $P=\{1,\frac{m+2}{3},\frac{2m+1}{3},0\}$ shows that (i)(a) holds; if $y=\frac{m}{3}+1$, then $A=\{1,\frac{m}{3}+1\}\cup \{0\}$ is an $H$-coset decomposition satisfying (ii)(a); if $y=\frac{m+4}{3}$, then $P=\{0,\frac{m+1}{3},\frac{2m+2}{3},1,\frac{m+4}{3}\}$ shows that (ii)(b) holds; if $y=\frac{m}{4}$, then $A=\{0,\frac{m}{4}\}\cup \{1\}$ is an $H$-coset decomposition satisfying (ii)(a); if $y=\frac{m+3}{4}$, then $P=\{1,\frac{m+3}{4},\frac{m+1}{2},\frac{3m+1}{4},0\}$ shows that (i)(b) holds; and if $y=\frac{m}{4}+1$, then $A=\{1,\frac{m}{4}+1\}\cup \{0\}$ is an $H$-coset decomposition satisfying (ii)(b).
Thus, in all cases except $y=\frac{m}{2}-1$, one of our desired conclusions follows. However, if $y=\frac{m}{2}-1$, then $m=|G|$ is even and $|nA|=4n-5=|G|-1=m-1$, whence $n=\frac{m}{4}+1$. In this case, \begin{align*}&kA=\{-k\}\cup [-k+2,k]\cup [\frac{m}{2}-k+1,\frac{m}{2}+k-2] \mod m &&\mbox{for $k\geq 2$ even, and}\\
 &kA=[-k+1,k]\cup \{\frac{m}{2}-k\}\cup [\frac{m}{2}-k+2,\frac{m}{2}+k-2]\mod m &&\mbox{for $k\geq 3$ odd}.\end{align*} Consequently, $nA=G$ if $n=\frac{m}{4}+1$ is odd, contrary to hypothesis, while if $n=\frac{m}{4}+1$ is even, then $8\nmid m=|G|$ and (iv) follows.
So we can instead assume $|4A|=15$, and thus also $|2A|=6$ and $|3A|=10$. It follows that $\epsilon_2=-1$, $\epsilon_3=0$ and $\epsilon_4=1$. In consequence, in view of \eqref{3card-form} and \eqref{whitetrain}, we find that we must have $n\geq 6$ with $|nA|=4n-4=|G|-1=m-1$, forcing $m\equiv 1\mod 4$ and $m=|G|\geq 21$. Moreover, we must have $n_5\leq 0$, as otherwise  \eqref{3card-form} and \eqref{whitetrain} ensure that $|nA|\geq 4n-3$, contrary to hypothesis.

 Since $n_5\leq 0$ with $|4A|=15$, we conclude that $|5A|\leq 19$. Now
\begin{align*}5A=[0,5]\cup (y+[0,4])\cup (2y+[0,3])\cup (3y+[0,2])\cup \{4y+[0,1]\}\cup\{5y\}.\end{align*}
If all the elements listed above are distinct apart from possibly $5y$, then $|5A|\geq 20$, contrary to what we concluded above. Thus, in view of $m\geq 21$, we  must have $y\equiv t\mod m$ for some $t\in [-4,5]$, or $2y\equiv t\mod m$ for some $t\in [-3,5]$, or $3y\equiv t\mod m$ for some $t\in [-2,5]$, or $4y\equiv t\mod m$ for some $t\in [-1,5]$. However, since $|4A|=15$, we can eliminate all the possibilities for $t$ considered in the previous paragraph, leaving only the following: $y\equiv t\mod m$ for some $t\in \{-4\}\cup \{5\}$, or $2y\equiv t\mod m$ for some $t\in \{-3\}\cup \{5\}$, or $3y\equiv t\mod m$ for some $t\in \{-2\}\cup \{5\}$, or $4y\equiv t\mod m$ for some $t\in \{-1\}\cup \{5\}$. Consequently, since $m\equiv 1\mod 4$ with $4\leq y\leq \frac{m}{2}$ and $m\geq 21$, we conclude that $y\in \{5,\frac{m-3}{2},\frac{m-2}{3},\frac{m+5}{3},\frac{m-1}{4}\}$. If $y=5$, then $5A=[0,13]\cup [15,17]\cup [20,21]\cup \{25\}$, in which case $|5A|=20$, contrary to assumption, unless $m\leq 25$ with $m\equiv 1\mod 4$.
If, in addition, $m=25$, then (ii)(c) holds, while  if in addition $m=21$, then (i)(c) holds.
If $y=\frac{m-3}{2}$, then applying the affine transformation $z\mapsto 2z+3$, we can assume $A=\{0,3,5\}$, in which case $5A=\{0,3,5,6\}\cup [8,20]\cup \{21,23,25\}$. Thus $|5A|=20$, contrary to assumption, unless $m\leq 25$ with $m\equiv 1\mod 4$.
If, in addition, $m=25$, then (ii)(c) holds, while  if in addition $m=21$, then (i)(c) holds.
If $y=\frac{m-2}{3}$, then applying the affine transformation $z\mapsto -3z+3$, we can assume $A=\{0,3,5\}$, and the argument is identical to the previous case when $y=\frac{m-3}{2}$.
If $y=\frac{m+5}{3}$, then applying the affine transformation $z\mapsto 3z$, we can assume $A=\{0,3,5\}$, and the argument is identical to the previous case once more.
Finally, if $y=\frac{m-1}{4}$, then applying the affine transformation $z\mapsto 4z+1$, we can assume $A=\{0,1,5\}$, in which the case the argument is identical to the case when $y=5$, completing subcase B.1.

\subsection*{Case B.2} $A-A$ contains no generating element for $G=\Z/m\Z$.

Let $A=\{0,x,y\}$ with $x,\,y\in [1,m-1]$. Since $A-A$ contains no generating element, it follows that $\gcd(x,m)\geq 2$, $\gcd(y,m)\geq 2$, $\gcd(x-y,m)\geq 2$. Since $\la A\ra=G$, we have $\gcd(x,y,m)=1$. Since every $H$-coset decomposition $A=A_1\cup A_0$  has $|H|\geq 5$ and $|G/H|\leq 4$, we must have $\gcd(x,m)\leq 4$, $\gcd(y,m)\leq 4$ and $\gcd(x-y,m)\leq 4$. Thus $\gcd(x,m),\,\gcd(y,m),\,\gcd(x-y,m)\in [2,4]$. If both $\gcd(x,m)\in [2,4]$ and $\gcd(y,m)\in [2,4]$ are even or are both equal to $3$, then this contradicts that  $\gcd(x,y,m)=1$. Thus $6\mid m$ and we may w.l.o.g. assume $\gcd(y,m)=3$ and $\gcd(x,m)\in \{2,4\}$. Let $x=2^sr$ with $r$ odd, $s\geq 1$ and $\gcd(r,m)=1$, and let $y=3t$ with $\gcd(t,\frac{m}{3})=1$. Then $x-y=2^sr-3t$ is neither divisible by $2$ nor $3$, contradicting that $\gcd(x-y,m)\in [2,4]$, which completes the subcase and proof.
\end{proof}

We note that most of the possibilities for $A$ given by Lemma \ref{lemma-itt-kst-size3} require $G$ to be finite with $\gcd(|G|,30)\neq 1$, the only exceptions being those in (i))(a) which ensure $A\subseteq P$ with $P$ a short length arithmetic progression (in which case $G$ is cyclic). Also, if $|nA|\leq 4n-6$, then  $|nA|\leq 3n$, showing there is a gap in the possible cardinalities for $|nA|$. Indeed, we always have $$\frac{|nA|}{n}\in \{2,3,4\}+\{-1/n, 0, 1/n, -4/n, -5/n\}$$ when $|A|<4n-3$. Conditions (i)(b), (i)(c), (ii)(b), (ii)(c), (iii) and (iv) each require both $|nA|=|G|-1$ and $|nA|\geq 4n-5$. Thus, if \eqref{bound-size3} is weakened to either $|nA|<\min\{|G|-1,4n-3\}$ or to $|nA|<\min\{|G|,4n-5\}$, then only conclusions (i)(a) or (ii)(a) can hold. Conclusions (i)(b), (i)(c), (ii)(b), (ii)(c), (iii) and (iv) also all require $|G|\equiv 0\mbox{ or } 1\mod 4$, and can be eliminated for $G$ infinite or with $|G|\equiv 2\mbox{ or } 3\mod 4$, where only conclusion (i)(a) or (ii)(a) can hold.

\begin{proof}[Proof of Theorem \ref{thm-main-sumset}]
We begin by calculating the size of $|nA|$ under each of the structural conditions given by Theorem \ref{thm-main-sumset}. Note, since $nA$ is aperiodic, we have $|nA|<|G|$. If Theorem \ref{thm-main-sumset}(i) holds with $A=P$, say w.l.o.g. $A=\{0,x,\ldots,(|A|-1)x\}$, then $nA=\{0,1,\ldots, n(|A|-1)x\}$ and $|nA|=(|A|-1)n+1$.  Suppose Theorem \ref{thm-main-sumset}(i) holds with $|P|=|A|+1$, say w.l.o.g. $P=\{0,x,\ldots,|A|x\}$ and $A=P\setminus \{rx\}$ with $r\in [ 1,|A|-1]$. If $r\in [2,|A|-2]$, then $A+A=\{0,x,\ldots,(2|A|)x\}$, \ $nA=\{0,x,\ldots,(n|A|)x\}$ and $|nA|=|A|n+1$. If w.l.o.g. $r=|A|-1$ (the case $r=1$ follows by replacing $x$ by $-x$ for the difference in the arithmetic progression $P$), then $nA=\{0,x,\ldots,(n|A|-2)x\}\cup \{(n|A|)x\}$. Thus either $(n|A|)x=0$ and $|nA|=|G|-1=|A|n-1$, or else $(n|A|)x\neq 0$ and $|nA|=|A|n$.
Instead suppose Theorem \ref{thm-main-sumset}(ii) holds, say w.l.o.g. $A=(x+K_1)\cup \Big(y+H\Big)\cup \ldots\cup \Big((r-1)y+H\Big)\cup \Big(ry+K_2\Big)$ with $r\geq 1$ and $H=K_1\oplus K_2\cong C_2\oplus C_2$. Then $|A|=4r$ and $nA=(nx+K_1)\cup \Big(y+H\Big)\cup \ldots\cup \Big((nr-1)y+H\Big)\cup \Big(nry+K_2\Big)$. Thus either $nry\notin H$, in which case $|nA|=4nr=|A|n$, or else  $nry\in H$, in which case $|nA|=|G|-1=4nr-1=|A|n-1$ since $|K_1\cup (z+K_2)|=3$ for any $z\in H$.
Instead suppose Theorem \ref{thm-main-sumset}(iii) holds, say w.l.o.g. $A=\{x\}\cup \Big (y+H\Big)\cup \ldots\cup \Big( ry+H\Big)\cup \{(r+1)y\}$ with $|H|=2$ and $r\geq 1$. Then $nA=\{nx\}\cup \Big (y+H\Big)\cup \ldots\cup \Big( (n(r+1)-1)y+H\Big)\cup \{n(r+1)y\}$ and $|A|=r|H|+2=2r+2$. If $n(r+1)y+H\neq H$, then $|nA|=(n(r+1)-1)|H|+2=2n(r+1)=|A|n$. If $n(r+1)y+H=H$, then we must have $n(r+1)y= nx$ in view of $|nA|<|G|$ and $|H|=2$, whence $|nA|=|G|-1=(n(r+1)-1)|H|+1=|A|n-1$. Next, suppose Theorem \ref{thm-main-sumset}(iv) holds, say w.l.o.g. $A=\{0\}\cup \Big (y+(H\setminus \{x\})\Big)\cup (2y+H)\cup \ldots\cup (ry+H)$ with $r\geq 1$. Furthermore, $r\geq 2$ when $|H|=2$. Then $|A|=r|H|$ and $nA=\{0\}\cup \Big (y+(H\setminus \{x\})\Big)\cup (2y+H)\cup \ldots\cup (rny+H)$. If $rny+H\neq H$, then $|nA|=rn|H|=n|A|$ in view of $|nA|<|G|$. If $rny+H=H$, then $|nA|=|G|-1=rn|H|-1=n|A|-1$. The size of $nA$ when Theorem \ref{thm-main-sumset}(v) holds will be calculated later.

Since $G$ is nontrivial with $\la A\ra_*=G$, we have $|A|\geq 2$. If $|A|=2$, then (i) holds with $|P|=|A|=2$. If $|A|=3$, then Lemma \ref{lemma-itt-kst-size3} completes the proof. Therefore we may assume $|A|\geq 4$ and w.l.o.g. (by translation) that $0\in A$. Since $nA$ is aperiodic, Kneser's Theorem implies that $|nA|\geq |(n-1)A|+|A|-1$. If $|kA|\geq |(k-1)A|+|A|+1$ for all $k\in [2,n-1]$, then $|(n-1)A|\geq (n-1)|A|+(n-2)$ follows by iterating these bounds, and then $|nA|\geq |(n-1)A|+|A|-1\geq |A|n+n-3$ follows, contrary to hypothesis. Therefore there is some $k\in [2,n-1]$ such that $|kA|\leq |(k-1)A|+|A|$, and we can apply Theorem \ref{KST-lemma} to $(k-1)A+A$.

If $|kA|\geq |G|-3$, then $|A|\geq 4$ combined with Theorem \ref{PigeonHoleBound} ensures that $(k+1)A=G$, whence $nA=G$ in view of $k<n$, contradicting that $nA$ is aperiodic with $G$ nontrivial. Therefore we can assume $|kA|\leq |G|-4$.
Since $|A|\geq 4$ and $|kA|\leq |G|-4$, $((k-1)A,A)$ cannot be elementary of type (I), (IV), (V), (VI) or (VII). If $((k-1)A,A)$ is elementary of type (II), then (i) follows. If $((k-1)A,A)$ is elementary of type (III), then $kA$ is periodic, and thus also $nA$ (as $k\leq n$), contrary to hypothesis. If $((k-1)A,A)$ is elementary of type (VIII), then (ii) follows. Therefore we can assume $((k-1)A, A)$ is not an elementary pair. If there is an arithmetic progression $P\subseteq G$ such that $A\subseteq P$ with $|P|\leq |A|+1$, then (i) follows. If Theorem \ref{KST-lemma}(iv) holds, then (iii) follows, while if Theorem \ref{KST-lemma}(v) holds, then (iv) follows.  Thus Theorem \ref{KST-lemma}(vi) must hold for $((k-1)A,A)$.

Let $H<G$ be a finite, nontrivial, proper subgroup such that Theorem \ref{KST-lemma}(vi) holds with $A_\emptyset=(x_0+H)\cap A$ and $B_\emptyset=(y_0+H)\cap B$, where $B=(k-1)A$. If $(\phi_H(A),\phi_H(B))$ is elementary of type (I), then this implies that $A$ is contained in an $H$-coset (since $A\subseteq (k-1)A=B$). However, in view of the hypothesis $\la A\ra_*=G$, this is only possible if $H=G$, contradicting that $H<G$ is proper. If $(\phi_H(A),\phi_H(B))$ is elementary of type (III), then $\phi_H(kA)=G/H$ and  Theorem \ref{KST-lemma}(vi)(e) ensures that $A+B=\Big((A+B)\setminus (A_\emptyset+B_\emptyset)\Big)\cup (A_\emptyset+B_\emptyset)$ is an $H$-quasi-periodic decomposition. In consequence, since $|\phi_H(A)|\geq 2$, it follows that $(k+1)A=G$, and thus $nA=G$ follows in view of $k<n$, contradicting that $nA$ is aperiodic with $G$ nontrivial. Therefore we must have $(\phi_H(A),\phi_H(B))$ elementary of type (II) by Theorem \ref{KST-lemma}(vi)(a). Moreover, since Theorem \ref{KST-lemma}(vi)(b) ensures that $\phi_H(A_\emptyset)+\phi_H(B_\emptyset)$ is a unique expression element in $\phi_H(A)+\phi_H(B)$, we must have $\phi_H(A_\emptyset)$ and $\phi_H(B_\emptyset)$ being the first term in the arithmetic progressions $\phi_H(A)$ and $\phi_H(B)$. Translating $A$ so that $0\in A_0=A_\emptyset$, we find $A_0\subseteq A\subseteq P:=A_0\cup (y+H)\cup \ldots\cup (ry+H)$, for some $y\in G\setminus H$, with $|P|=|A|+\epsilon$ for some $\epsilon \in \{0,1\}$ in view of Theorem \ref{KST-lemma}(vi)(c). Note  $r=|\phi_H(A)|-1\geq 1$, meaning (a) holds.
Since $A+B=\Big((A+B)\setminus (A_\emptyset+B_\emptyset)\Big)\cup (A_\emptyset+B_\emptyset)$ is an $H$-quasi-periodic decomposition with $\phi_H(A)$ an arithmetic progression having $\phi_H(A_0)$ as an end-term, it is now clear that $(hA\setminus hA_0)\cup hA_0$ is an $H$-quasi-periodic decomposition for any $h\geq k$. In particular, $(nA\setminus nA_0)\cup nA_0$ is an $H$-quasi-periodic decomposition, meaning (d) holds. Moreover, since $nA$ is aperiodic with $H$ nontrivial, we must have $nA_0$ aperiodic, so that (b) holds, as well as $|n\phi_H(A)|=n|\phi_H(A)|-n+1\leq |G/H|$. Consequently,  $$|nA|=|H|(|n\phi_H(A)|-1)+|nA_0|=|H|(n|\phi_H(A)|-n)+|nA_0|=n(|A|-|A_0|
+\epsilon)
+|nA_0|,$$ implying $|nA|-|A|n=|nA_0|-|A_0|n+\epsilon n$. Thus (e) holds. Since $nA_0$ is aperiodic, we have $|nA_0|<|\la A_0\ra_*|$ or $|A_0|=1$.
Finally, since $n(|A|-|A_0|
+\epsilon)
+|nA_0|=|nA|<(|A|+1)n-3$, it follows that $|nA_0|<(|A_0|+1-\epsilon)n-3$, and (c) holds, showing that (v) holds, which  completes the proof.
\end{proof}

For large $n$, most of the possibilities given by Theorem \ref{thm-main-sumset} are not possible, leading to the following \emph{non-recursive} description, which we will make use of (in the more specialized version stated in Corollary \ref{cor2}) for our generalization of Olson's result.

\begin{corollary}\label{cor1}
Let $G$ be a finite abelian group,  let $A\subseteq G$ be a nonempty subset with $\la A\ra_*=G$, let $n\geq 3$ be an integer, and let $K=\mathsf H(nA)$. If $n\geq \exp(G)+3$, then $$|nA|\geq \min\{|G|,\,(|A|+1)n-3\}.$$ If $n\geq \exp(G)-1$ and $|nA|< \min\{|G|,\,(|A|+1)n-3\}$, then one of the following holds.
\begin{itemize}
\item[1.] $n=\exp(G)+2=7$, \ $G\cong C_5^2$, \ $|K|=1$, \  $|A|=3$ \  and  \ $|G|-1=|nA|=(|A|+1)n-4$ with $A$ given by Lemma \ref{lemma-itt-kst-size3}(ii)(c).
\item[2.]    $n=\exp(G)+1$, \  $4\mid \exp(G)$ and either
     \begin{itemize}
\item[(a)] $n=5$, \ $G\cong K\times C_4^2$, \ $|A|n\leq  \frac{15}{16}|G|$ \  and  \ $|G|-|K|=|nA|\geq |A+K|n$ with $\phi_K(A)$ given by Lemma \ref{lemma-itt-kst-size3}(ii)(b), or
\item[(b)] $n=\frac14|G|+1\geq 9$, \ $G\cong C_4\times C_{\exp(G)}$,  \  $|K|=1$, \ $|A|n= 3n= \frac34|G|+3<|G|$, \ and $|G|-1=|nA|=(|A|+1)n-5$ with $A$ given by Lemma \ref{lemma-itt-kst-size3}(ii)(b).
    \end{itemize}
 \item[3.]     $n=\exp(G)$, \ $G\cong H\times C_{\exp(G)}$ with $K<H$, \ $|A|n\leq |G|$, \ $|G|-|K|=|nA|\geq |A+K|n-|K|$, \ $|\phi_H(A)|=2$ and either
     \begin{itemize}
     \item[(a)] $H/K\cong C_2^2$ and  $(\phi_K(A),\phi_K(A))$ is elementary of type (VIII) with $|\phi_K(A)|=4$, or
     \item[(b)]  $|H/K|\geq 3$ and $z+A+K=(H\setminus K)\cup (x+K)$ for some $z\in G$ and $x\in G\setminus H$.
     \end{itemize}
\item[4.]     $n=\exp(G)-1$, \  $G\cong H\times C_{\exp(G)}$ with $K<H$ proper, $|\phi_H(A)|=2$, and either
\begin{itemize}
\item[(a)]  $|A|n\leq \frac{\exp(G)-1}{\exp(G)}|G|$, \ $|G|-|H|=|nA|\geq |A+K|n$, and 3(a) or 3(b) holds,
\item[(b)] $z+A+K=(H\setminus K)\cup (A_0+K)$ for some $z\in G$ with $A_0$ a nonempty subset of an $H$-coset, \ $|A|n\leq |G|-2|K|$, and $|G|-|H|+|n(A_0+K)|=|nA|\geq |A+K|n+|K|$,
\item[(c)] $z+A+K=H\cup (A_0+K)$ for some $z\in G$ with $A_0$ a nonempty subset of an $H$-coset, \  $|A|n\leq |G|$, and  $|nA|=|G|-|H|+|n(A_0+K)|$, or
\item[(d)] $G=H_0\oplus H_1\oplus\ldots \oplus H_r$ with  $K<H_0$ proper, $r\geq 1$ and $H_i=\la x_i\ra\cong C_{\exp(G)}$ for all $i\in [1,r]$, \ $z+A+K= \bigcup_{j=0}^{r}\big(K+\Sum{i=0}{j-1}H_i+\Sum{i=j+1}{r}x_i\big)$ for some $z\in G$, $|A|n\leq |G|-|H_0|+(\exp(G)-1)|K|\leq \frac{p\exp(G)^r+\exp(G)-p-1}{p\exp(G)^r}|G|$, where $p$ is the smallest prime divisor of $\exp(H_0)$, and $|nA|=|G|-|H_0|+|K|$.
\end{itemize}
\end{itemize}
\end{corollary}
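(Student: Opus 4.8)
The plan is to pass to the aperiodic quotient so that Theorem~\ref{thm-main-sumset} (and Lemma~\ref{lemma-itt-kst-size3}) applies, and then to use that $n$ is large relative to $\exp(G)$ in order to collapse each structural possibility to the short explicit list. Set $K=\mathsf H(nA)$; since $|nA|<|G|$ the subgroup $K$ is proper, so $\bar G:=G/K$ is nontrivial. Put $\bar A:=\phi_K(A)$, so $\langle\bar A\rangle_*=\bar G$ and $n\bar A=\phi_K(nA)$ is aperiodic. From $|nA|=|K|\,|n\bar A|$ and $|A+K|=|K|\,|\bar A|$, together with the identity $n|A+K|+(n-3)|K|-\big((|A|+1)n-3\big)=n(|A+K|-|A|)+(n-3)(|K|-1)\ge 0$, the hypothesis $|nA|<(|A|+1)n-3$ yields $|nA|<n|A+K|+(n-3)|K|$, i.e. $|n\bar A|<(|\bar A|+1)n-3$; likewise $|n\bar A|<|\bar G|$. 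Since $\exp(\bar G)\mid\exp(G)$ we still have $n\ge\exp(\bar G)-1$. Thus Theorem~\ref{thm-main-sumset} applies to $\bar A\subseteq\bar G$ (using Lemma~\ref{lemma-itt-kst-size3} when $|\bar A|=3$, and the trivial arithmetic‑progression case when $|\bar A|=2$), and the overall argument then proceeds by induction on $|G|$.

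First I would dispose of the arithmetic‑progression possibilities: if $\bar A$ lies in an arithmetic progression of length $\le|\bar A|+1$ (case (i) of Theorem~\ref{thm-main-sumset}, or cases (i)(a)--(c) of Lemma~\ref{lemma-itt-kst-size3}), then $\bar G$ is cyclic with $|\bar G|=\exp(\bar G)\le n+1$, whereas the cardinality formulas listed there force $|n\bar A|\ge(|\bar A|-1)n+1\ge n+1>n\ge|\bar G|-1\ge|n\bar A|$, a contradiction. For each remaining possibility---types (ii)--(iv) of the Lemma and types (ii), (iii), (iv) of Theorem~\ref{thm-main-sumset}---the structure is built from a subgroup $H'<\bar G$, an element $y$ with $\phi_{H'}(y)$ of order $\ge r+1$ (respectively $2r+1$, etc.), and $r$ full $H'$‑cosets, where $\bar G/H'=\langle\phi_{H'}(y)\rangle$ is cyclic because $\langle\phi_{H'}(\bar A)\rangle_*=\bar G/H'$ is generated by $\phi_{H'}(y)$. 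Since $n\bar A$ contains on the order of $nr$ consecutive full $H'$‑cosets while $|n\bar A|<|\bar G|=\ord(\phi_{H'}(y))\,|H'|\le\exp(\bar G)|H'|\le(n+1)|H'|$, we are forced to $r=1$, hence $|\phi_{H'}(\bar A)|=2$, and $\ord(\phi_{H'}(y))$ is pinned to within a bounded deviation of $n+1$; this forces $\exp(\bar G)$ to agree with $\exp(G)$ and $n\in\{\exp(G)-1,\exp(G),\exp(G)+1,\exp(G)+2\}$, the larger values occurring only for the sporadic $C_5^2$‑ and $C_4^2$‑type configurations coming from Lemma~\ref{lemma-itt-kst-size3}(ii). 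Lifting $H'$ to a subgroup $H\ge K$ of $G$ and using the standard fact that an element of order $\exp(G)$ generates a direct summand, one obtains the splittings $G\cong H\times C_{\exp(G)}$ in cases 1--4, and the distinctions $H/K\cong C_2^2$ versus $|H/K|\ge 3$ versus the $C_2$‑ and $C_3$‑slice shapes are precisely the elementary‑pair types (VIII), (II)‑with‑a‑hole, etc. from the classification.

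Case (v) of Theorem~\ref{thm-main-sumset} is handled by the induction: the inner set $A_0\subseteq H'$ has $\langle A_0\rangle_*=:H''$ with $|H''|\le|H'|<|\bar G|\le|G|$, and by (v)(b),(c) it satisfies $n\ge\exp(H'')-1$, $nA_0$ aperiodic, and $|nA_0|<(|A_0|+1)n-3$, so either $|A_0|=1$ (base case) or the inductive hypothesis applies to $A_0$ in $H''$; re‑assembling via (v)(d),(e) (where the $y$‑cosets above $A_0$ fill up fully, modulo at most one hole) turns each level of recursion into another $C_{\exp(G)}$ summand, which is exactly how the iterated direct sum $G=H_0\oplus H_1\oplus\cdots\oplus H_r$ of case 4(d) arises. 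Throughout, the value $|nA|=|K|\,|n\bar A|$ and the bounds $|A|n\le|G|-\cdots$ are read off from the explicit coset descriptions exactly as in the cardinality computations opening the proof of Theorem~\ref{thm-main-sumset} and in the proof of Lemma~\ref{lemma-itt-kst-size3}. Finally, since no structure in the resulting list has $n\ge\exp(G)+3$, the displayed lower bound $|nA|\ge\min\{|G|,(|A|+1)n-3\}$ for $n\ge\exp(G)+3$ follows by contraposition.

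I expect the main obstacle to be the sheer volume of bookkeeping in the middle steps: matching each surviving structure of Theorem~\ref{thm-main-sumset}/Lemma~\ref{lemma-itt-kst-size3} to the correct sub‑case of 1--4, verifying in each the exact value of $|nA|$ and the sharp inequalities such as $|A|n\le|G|-|H_0|+(\exp(G)-1)|K|$ (whose constants are tight, as Examples~B.1--B.3 show), controlling the interplay of the parameters $r,\epsilon,|A_0|,|K|,\exp(\bar G),\exp(G)$, and---most delicately---checking that the recursion in case (v) genuinely produces the layered direct‑sum shape of 4(d) and nothing outside the list. The individual verifications are elementary, but the number of branches together with the tightness of the estimates makes this the substantial part of the argument.
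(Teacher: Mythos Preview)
Your proposal is correct and follows essentially the same approach as the paper: pass to the aperiodic quotient $\bar A=\phi_K(A)$, apply Theorem~\ref{thm-main-sumset} (and Lemma~\ref{lemma-itt-kst-size3} when $|\bar A|=3$), then run through the structural possibilities case by case using $n\ge\exp(G)-1$ to force $r=1$ and $|G/H|=\exp(G)$, with induction on $|G|$ handling the recursive case~(v). The paper's proof is organized as eight labeled cases A--H (one per structure type) and carries out exactly the bookkeeping you anticipate; the only point where your sketch is slightly loose is that Theorem~\ref{thm-main-sumset}(iii) is eliminated outright (since $|\phi_{H}(\bar A)|=r+2\ge 3$ already gives $n\le\frac12\exp(G)$) rather than reduced to $r=1$.
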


\begin{proof} We may assume $$n\geq \max\{3,\,\exp(G)-1\},$$ as the corollary only applies in these cases.
Let $K=\mathsf H(nA)$, let $X=\phi_K(A)$ and suppose $|nA|<\min\{|G|,\,(|A|+1)n-3\}.$  If $|X|=|\phi_K(A)|=1$, then $nA=\la A\ra_*=G=K$, contrary to assumption. Therefore we can assume $|X|=|\phi_K(A)|\geq 2$.  In particular, $G/K$ is nontrivial.
Observe that $|nA|=|n(A+K)|=|nX||K|$. Thus, if $|nX|\geq x|X|+y$ for some integers $x\geq 0$ and $y$, then $|nA|\geq x|A|+y|K|$ as well. In particular, we have $|nX|<\min\{|G/K|,\,(|X|+1)n-3\}$ and can apply Theorem \ref{thm-main-sumset} to $nX$. We proceed to go through the possibilities for $X$ given by Theorem \ref{thm-main-sumset} one by one.

\subsection*{Case A} Suppose there is an arithmetic progression $P\subseteq G/K$ with $X\subseteq P$. Then $\la P\ra_*=\la X\ra_*=G/K$ is cyclic with $|G/K|\leq \exp(G)$. If $|P|=|X|$, then,
since $nX\neq G/K$ and $|X|\geq 2$, it follows that $n\leq |G/K|-2\leq \exp(G)-2$. If $|P|=|X|+1$, then $|X|\geq 3$ and $|nX|\geq |X|n-1\geq 3n-1$ (by the same calculations done out at the beginning of the proof of Theorem \ref{thm-main-sumset}), forcing $n\leq \frac13|G/K|\leq |G/K|-2\leq \exp(G)-2$. If $|X|=3$, $|P|=5$ and either $|nX|=4n-5=|G/K|-1$ or $|nX|=4n-4=|G/K|-1$, then $n\leq \frac14|G/K|+1\leq |G/K|-2\leq \exp(G)-2$. If $|P|=6$, $|G/K|=21$ and $|nX|=4n-4=|G/K|-1=20$, then $n=6<19=|G/K|-2\leq \exp(G)-2$. In all cases, we obtain the contradiction $n\leq \exp(G)-2$, thus handling all possibilities when  $X$ is contained in a short length arithmetic progression. In particular, the theorem is now established for $G\cong C_p$ with $p$ prime, allowing us to proceed by induction on $|G|$.

\subsection*{Case B} Suppose Theorem \ref{thm-main-sumset}(ii) holds for $X$, say \be\label{ref}z+A+K=\Big(x+K_1\Big)\cup \Big(y+H\Big)\cup \ldots\cup \Big((r-1)y+H\Big)\cup \Big(ry+K_2\Big),\ee where $H/K=K_1/K\oplus K_2/K\cong C_2\oplus C_2$ and $r\geq 1$.  Then  $G/H$ is cyclic since $\phi_H(A)$ is an arithmetic progression with $\la \phi_H(y)\ra=\la \phi_H(A)\ra_*=G/H$. Thus $|G/H|\leq \exp(G)$. Indeed, $|G/H|$ divides $\exp(G)$, in which case either $|G/H|=\exp(G)$ or $|G/H|\leq \frac12 \exp(G)$.
Since $nX\neq G/K$, we must have $3\leq n\leq |G/H|\leq \exp(G)$. If the latter bound is strict, we obtain the contradiction $3\leq n\leq \frac12\exp(G)\leq \exp(G)-2$. As a result, we must have $|G/H|=\exp(G)$, whence $G\cong H\times C_{\exp(G)}$, in which case $\exp(G)$ must be even (as $H$ contains a subgroup isomorphic to $C_2^2$). We also have either  $|nX|=|X|n\leq |G/K|-1$ or else $|nX|=|X|n-1=|G/K|-1$ (by Theorem \ref{thm-main-sumset}(ii)). Thus
$n\leq \frac{|G/K|}{|X|}\leq \frac14|G/K|=|G/H|=\exp(G)$. If $|X|>4$, then $|X|\geq 8$ (in view \eqref{ref}) and we can improve the bound to $3\leq n\leq \frac12\exp(G)\leq \exp(G)-2$, contrary to assumption. Therefore $|X|=4$ and $r+1=|\phi_H(A)|=2$.
If
$n=\exp(G)-1$, then
 $|A|n\leq |X||K|n=4|K|(\exp(G)-1)=|G|-|H|=\frac{\exp(G)-1}{\exp(G)}|G|$ and $|nA|=|n(A+K)|=|nX||K|=n|X||K|=|A+K|n$ as well as $|nA|=|nX||K|=(|G/K|-|H/K|)|K|=|G|-|H|$, whence  4(a) holds. If $n=\exp(G)$, then $|A|n\leq |X||K|n=4n|K|=4\exp(G)|K|=|G|$ and $|nA|=|n(A+K)|=|nX||K|=(n|X|-1)|K|=|A+K|n-|K|$ as well as $|nA|=|nX||K|=(|G/K|-1)|K|=|G|-|K|$, whence 3(a) holds.

\subsection*{Case C} Suppose Theorem \ref{thm-main-sumset}(iii) holds for $X$, say
$$z+A+K=\{x+K\}\cup \Big(y+H\Big)\cup \ldots\cup \Big(ry+H\Big)\cup \Big((r+1)y+K\Big),$$ where $|H/K|=2$ and $r\geq 1$. Then $G/H$ is cyclic and generated by $\phi_H(y)$,  so $|G/H|\leq \exp(G)$. We have $|\phi_{H/K}(X)|=r+2\geq 3$. Thus, since $nX\neq G/K$ is aperiodic, we must have  $2n+1\leq|\phi_{H/K}(X)|n-n+1\leq |G/H|+1$ (the upper bound follows lest $nX=G/K$ in view of the structural description of $X$), implying $3\leq n\leq\frac12|G/H|\leq \frac12 \exp(G)\leq \exp(G)-2$, contrary to assumption.

\subsection*{Case D}Suppose Theorem \ref{thm-main-sumset}(iv) holds for $X$, say $$z+A=K\cup \Big(y+\big(H\setminus (x+K)\big)\Big)\cup \Big(2y+H\Big)\cup \ldots\cup \Big(ry+H\Big),$$ where $H/K$ is nontrivial and $r\geq 1$ with this inequality strict when $|H/K|=2$. Then $G/H$ is cyclic, generated by $\phi_H(y)$. We purposefully allow $|X|=3$ in this case, which corresponds to when $|H/K|=3$ and $r=1$. As in previous cases, $|G/H|$ divides $\exp(G)$, and thus $|G/H|<\exp(G)$ implies that $|G/H|\leq \frac12\exp(G)$. We have $|\phi_{H/K}(X)|=r+1\geq 2$ with the inequality strict when $|H/K|=2$.   Thus, since $nX\neq G/K$ is aperiodic, we  have  $rn+1\leq|\phi_{H/K}(X)|n-n+1\leq |G/H|+1$ (the upper bound follows lest $nX=G/K$ in view of the structural description of $X$).
 Consequently, if $r\geq 2$ or $|G/H|<\exp(G)$, we obtain the contradiction $3\leq n\leq \frac12 \exp(G)\leq \exp(G)-2$.
 Therefore we must have  $|G/H|=\exp(G)$ and $r=1$, in which case $|H/K|\geq 3$. Furthermore, $|\phi_H(A)|=2$, \ $G\cong H\times C_{\exp(G)}$ and $n\leq \exp(G)$. If $n=\exp(G)-1$, then $|G/K|-|H/K|=|nX|=|X|n$, \ $|G|-|H|=|nA|= |nX||K|=|A+K|n$ and $|A|n\leq n|X||K|=(\exp(G)-1)|H|=|G|-|H|=\frac{\exp(G)-1}{\exp(G)}|G|$, whence 4(a) holds. If $n=\exp(G)$, then $|G/K|-1=|nX|=|X|n-1$, \ $|G|-|K|=|nA|=|A+K|n-|K|$, and $|A|n\leq |X||K|n=|H|\exp(G)=|G|$. Thus 3(b) holds.

\subsection*{Case E} Suppose Theorem \ref{thm-main-sumset}(v) holds for $X$, say (after translating $A$ appropriately) $$A_0+K\subseteq A+K\subseteq P=(A_0+K)\cup (y+H)\cup \ldots\cup (ty+H),$$  with $H/K$ nontrivial, $A_0= H\cap A$ nonempty and $t\geq 1$.  Then  $G/H$ is cyclic,  generated by $\phi_H(y)$,  so $|G/H|\leq \exp(G)$. As in the previous cases, $|G/H|$ divides $\exp(G)$, and thus $|G/H|<\exp(G)$ implies that $|G/H|\leq \frac12\exp(G)$. Let $$X_0=\phi_K(A_0).$$
 In  view of Theorem \ref{thm-main-sumset}(vi)(a), we have $|\phi_K(P)|=|X|+\epsilon$ with $\epsilon \in \{0,1\}$. In  view of Theorem \ref{thm-main-sumset}(vi)(b), we have $n\phi_K(A_0)=nX_0$ aperiodic. In  view of Theorem \ref{thm-main-sumset}(vi)(d), we have $nA=nP$. We purposely allow $|X|=3$ in this case, which corresponds to when   $\epsilon =0$, $r=1$ and $|H/K|=2$. Note, if $\epsilon =1$, $|X_0|=1$,  $r=1$ and $|H/K|=3$, then we actually fall under Case D.

Since $nP=nA\neq G$, the structural description above ensures  $tn+1=n|\phi_H(A)|-n+1\leq |G/H|\leq \exp(G)$. Consequently, if $|G/H|\leq \frac12 \exp(G)$ or $t\geq 2$, then we obtain the contradiction $3\leq n\leq \frac{\exp(G)-1}{2}\leq \exp(G)-2$. Therefore we must have $t=1$ and $|G/H|=\exp(G)$, in which case $|\phi_H(A)|=2$, \ $G\cong H\times C_{\exp(G)}$ and $n+1\leq \exp(G)$, in turn implying $n=\exp(G)-1$.
Since $nA=nP$ with $n=\exp(G)-1$ and $r=1$, we have
$nA=\big(G\setminus (H+nA_0)\big)\cup nA_0$. In particular,
$\barr{nA_0}{H}=\barr{nA}{G}$,  \be\label{teecot}|nA|=|G|-|H|+|n(A_0+K)|\quad\und\quad\mathsf H(nA_0)=\mathsf H(\barr{nA_0}{H})=\mathsf H(\barr{nA}{G})=\mathsf H(nA)=K.\ee
In particular, $\mathsf H(nA_0)=K$ implies that  $nA_0=n(A_0+K)$.

If $\epsilon=1$, then
$z+A+K=(H\setminus K)\cup (z+A_0+K)$ for some $z\in -y+H$.
Since  $nA=nP$ and $K=\mathsf H(nA)=\mathsf H(nP)$, Kneser's Theorem implies \be\label{gogrimp}|nA|=|nP|\geq n|P+K|-(n-1)|K|=n|A+K|+|K|.\ee
Since $\mathsf H(nA_0)=K<H$ by \eqref{teecot}, we have $|nA_0|=|n(A_0+K)|\leq |H|-|K|$, which combined with \eqref{gogrimp} and \eqref{teecot} implies $|A|n\leq |A+K|n\leq |nA|-|K| \leq |G|-2|K|$. Thus 4(b) holds.

If $\epsilon=0$, then, by w.l.o.g. replacing $A$ and $A_0$ with appropriate translates, we have
$A+K=H\cup (A_0+K)$.  Letting $H'=\la A_0\ra_*\leq H$, we have $K\leq H'\leq H$ in view of \eqref{teecot}. If $|X_0|=1$, then $|nA_0|=|n(A_0+K)|=|K|$, \ $|A|\leq |X||K|=|H|+|K|$ and $|K|\leq \frac{1}{p}|H|$, where $p$ is the smallest prime divisor of $\exp(H)$ (since $K<H$ is a proper subgroup in view of $H/K$ being nontrivial). Thus
\begin{align*} |A|n\leq& \;(|H|+|K|)(\exp(G)-1)=|G|-|H|+(\exp(G)-1)|K|\\\leq & \; \frac{p+1}{p}(|G|-|H|)=\frac{p\exp(G)+\exp(G)-p-1}{p\exp(G)}|G|,\end{align*} in which case 4(d) holds with $H_0=H$ and $r=1$.  Therefore we may now assume $|X_0|>1$, whence $K<H'$ is a proper subgroup and $H'/K$ is nontrivial. In particular, since $nX_0$ is aperiodic, we must have $|nX_0|<|H'/K|\leq |H/K|$. Thus, if $|nX_0|\geq |X_0|n$, then we have $|X_0|n\leq |nX_0|\leq  |H/K|-1$, whence $|A|n\leq |H|n+|X_0||K|n\leq |G|-|K|$, meaning 4(c) holds. Therefore we may instead assume \be\label{inductallow}|nX_0|<\min\{|H'/K|,\,|X_0|n\}\quad\und\quad|nA_0|<
\min\{|H'|,\,|A_0+K|n\},\ee where the second inequality follows by multiplying the first by $|K|$.
Since $3\leq n=\exp(G)-1$, we have $\exp(G)\geq 4$.
 If $\exp(H'/K)<\exp(G)$, then $\exp(H'/K)\leq \frac12\exp(G)$ and $n=\exp(G)-1\geq \frac12\exp(G)+1\geq \exp(H'/K)+1$. Consequently, applying the induction hypothesis to $nX_0$ shows either Item 1 or 2 holds for $X_0$, in which case $|nX_0|\geq |X_0|n$, contrary to \eqref{inductallow}. Therefore we instead conclude that $\exp(H'/K)=\exp(H')=\exp(H)=\exp(G)$. But now \eqref{inductallow} and \eqref{teecot}  combined with an application of the induction hypothesis to $n(A_0-x)$, where $x\in A_0$ is any element, imply either $|A_0|n\leq |H'|\leq |H|$ or else that 4(d) holds for $n(A_0-x)$. In the former case, we have $|A|n\leq |H|n+|A_0|n=|G|-|H|+|A_0|n\leq |G|$, whence 4(c) holds. On the other hand, in the latter case, we have $H'=H_0\oplus H_1\oplus\ldots \oplus H_{r-1}$ with $K<H_0$ proper, $r-1\geq 1$, $H_i=\la x_i\ra\cong C_{\exp(H')}\cong C_{\exp(G)}$ for all $i\in [1,r-1]$, \ \be\label{waith}z+A_0-x+K=\bigcup_{j=0}^{r-1}
 \big(K+\Sum{i=0}{j-1}H_i+\Sum{i=j+1}{r-1}x_i\big)\ee for some $z\in H'$, \be\label{bounddou}|A_0|n\leq |H'|-|H_0|+(\exp(G)-1)|K|\leq \frac{p\exp(G)^{r-1}+\exp(G)-p-1}{p\exp(G)^{r-1}}|H'|,\ee where $p$ is the smallest prime divisor of $\exp(H_0)$,
 and \be\label{eagle}|nA_0|=|H'|-|H_0|+|K|.\ee If $H'\neq H$, then $|H'|\leq \frac12|H|$, in which case \eqref{bounddou} implies $|A_0|n<|H|$ and $|A|\leq |H|n+|A_0|n<|H|(n+1)=|G|$, whence 4(c) holds. Therefore we may assume $H'=H$. Since $x\in A_0$ and since $\la \phi_H(A_0)\ra=\la \phi_H(A)\ra=G/H$ (recall $A+K=H\cup (A_0+K)$) with $|G/H|=\exp(G)$, we have $G=H\oplus \la x\ra$. Thus, letting $x_r=x$ and $H_r=\la x_r\ra=\la x\ra\cong C_{\exp(G)}$, we find that $G=H\oplus H_r=H'\oplus H_r=H_0\oplus H_1\oplus\ldots\oplus H_r$. In view of \eqref{waith}, $z\in H'=H$ and $K<H_0$, we have $z+A+K=(z+H)\cup (z+A_0+K)=H\cup (z+A_0+K)=\bigcup_{j=0}^{r}\big(K+\Sum{i=0}{j-1}H_i+\Sum{i=j+1}{r}x_i\big)$.
 In view of \eqref{bounddou} and $H'=H$, we have $|A|n\leq |H|n+|A_0|n=|G|-|H|+|A_0|n\leq |G|-|H_0|+(\exp(G)-1)|K|\leq |G|-|H_0|+(\exp(G)-1)\frac{|H_0|}{p}= \frac{p\exp(G)^r+\exp(G)-p-1}{p\exp(G)^r}|G|$, where $p$ is the smallest prime divisor of $\exp(H_0)$ (since $K<H_0$ is proper).  In view of \eqref{eagle}, \eqref{teecot} and $H'=H$, we have $|nA|=|G|-|H|+|nA_0|=|G|-|H_0|+|K|$. Thus 4(d) holds.

\subsection*{Case F} Suppose that $|X|=3$ and Lemma \ref{lemma-itt-kst-size3}(ii)  holds. Then  there is an $H/K$-coset decomposition $X=X_1\cup X_0$ with $|X_1|=2$, $|X_0|=1$ and  $\la X_1\ra_*=H/K$ cyclic. Then $G/H$ is generated by a non-zero difference from $\phi_H(X_1-X_0)$, ensuring that $G/H$ is cyclic, whence $|G/H|\leq \exp(G)$ with $|G/H|\leq \frac12\exp(G)$ when equality fails (as in previous cases).

If $2\leq |H/K|\leq 3$, then Theorem \ref{thm-main-sumset}(iv) or (v) holds, which was handled in Cases D and E. If $|H/K|=4$ and $|nX|=4n-5=|G/K|-1$, then $4\mid \exp(G)$ (as $H/K$ is cyclic of order $4$), \ $$|nA|=|G|-|K|$$  and $3\leq n=\frac14|G/K|+1=|G/H|+1\leq \exp(G)+1$. If the latter inequality is strict, we obtain the contradiction $3\leq n=|G/H|+1\leq \frac12\exp(G)+1\leq \exp(G)-2$ unless $\exp(G)=4$, $|G/H|=\frac12\exp(G)=2$ and $n=3=\frac14|G/K|+1$. In this case, since $nX\neq G/K$, Theorem \ref{PigeonHoleBound} implies that $|2X|\leq |G/K|-3=5$. Translating as necessary, we can w.l.o.g. assume $X_1=\{0,x\}$ and $X_0=\{y\}$. Thus $2X=\{0,x,2x\}\cup \{y,y+x\}\cup \{2y\}$. Since $|2X|\leq 5$ and $|H/K|=4$, we must have $2y\in \{0,x,2x\}$. If $2y=x$, then $X=\{0,y,2y=x\}$ is an arithmetic progression, which was handled in Case A. If $2y=2x$, then $\ord(x-y)=2$ and $X=\{x,y\}\cup \{0\}$ is an $H'/K$-coset decomposition with $|H'/K|=2$. Thus Theorem \ref{thm-main-sumset}(v) holds, which was handled in Case E.  Finally, if $2y=0$, then $\ord(y)=2$ and $X=\{0,y\}\cup \{x\}$ is an $H'/K$-coset decomposition with $|H'/K|=2$, in which case Theorem \ref{thm-main-sumset}(v) again holds, which was handled in Case E.
Thus we can instead assume $|G/H|=\exp(G)$ and $n=|G/H|+1=\exp(G)+1\geq 5$. In particular, $G\cong H\times C_{\exp(G)}$. We also have  $$|A+K|n+(n-5)|K|= (4n-5)|K|=|nX||K|=|nA|\leq |A|n+n-4\leq 3n|K|+n-4,$$ which (in view of $n\geq 5$) implies either $|K|=1$ or $n\leq 6$. Since $n=\exp(G)+1$ with $4\mid \exp(G)$, we conclude that $n=6$ is not possible.
If $n=5$, then $|G/H|=\exp(G)=4$ and $|A|n\leq 15|K|\leq \frac{15}{16}|G|$. Moreover, since $H/K$ is a cyclic group of order $4=\exp(G)$ with $G\cong H\times C_4$, it follows that  $\exp(H)=\exp(G)=4$ with $H\cong K\times C_4$. Thus 2(a) holds. On the other hand, if $n>5$, then  $|K|=1$, \ $|H|=4$, \ $\frac14|G|+1=|G/H|+1=n\geq 9$,   and $|A|n=3n=3\exp(G)+3= 3|G/H|+3=\frac34|G|+3<|G|$. Moreover, since $H/K=H$ is a cyclic group of order $4$, we have $G\cong H\times C_{\exp(G)}\cong C_4\times C_{\exp(G)}$. Thus 2(b) holds.

If $|H/K|=|G/H|=5$, $n=7$ and $|nX|=4n-4=|G/K|-1=24$, then $5\mid \exp(G)$. We must have $5=|G/H|=\exp(G)$, for otherwise $\exp(G)\geq 2|G/H|=10$, contradicting that $7=n\geq \exp(G)-1$. Hence $G\cong C_5^s$,  $H\cong C_5^{s-1}$ and $K\cong C_5^{s-2}$, where $s\geq 2$. We now have $(|A|+1)n-3>|nA|=|nX||K|=24|K|= 3n|K|+3|K|\geq n|A|+3|K|$, implying $3|K|<4$, and thus $|K|=1$. Hence $G\cong C_5^2$, \ $|A|=3$, and $|G|-1=|nA|=24=(|A|+1)n-4$. Thus 1 holds.

\subsection*{Case G} Suppose Lemma \ref{lemma-itt-kst-size3}(iii) holds for $X$, in which case  there is an $H/K$-coset decomposition $X=\{x,z\}\cup \{y\}$ with $2(y+z)=4x$ and  $\la x-z\ra=H/K\leq G/K$ a cyclic subgroup such that $|G/H|=2$. Moreover, $G/K\cong C_2\times C_m$ with $m$ even. Hence $\exp(G)\geq \exp(G/K)= \frac12|G/K|$. We have $|nX|=4n-5=|G/K|-1$, ensuring that $3\leq n=\frac14|G/K|+1=\frac12\exp(G/K)+1\leq\frac12\exp(G)+1$, which  contradicts that $n\geq \exp(G)-1$ unless $m=\exp(G/K)=\exp(G)=4$. Thus $G/K\cong C_2\times C_4$ and $n=\frac14|G/K|+1=3$. By lemma \ref{lemma-itt-kst-size3}(iii), we also have $2(y-z)=4(x-z)=0$, with the latter equality in view of $\exp(G/K)=4$. Thus $X=\{y,z\}\cup \{x\}$ is an $H'/K$-coset decomposition with $H'/K=\la y-z\ra$ a subgroup of order $2$, in which case Theorem \ref{thm-main-sumset}(v) holds with $\epsilon=0$ and $|X_0|=1$, which was handled in Case E.

\subsection*{Case H} Suppose  Lemma \ref{lemma-itt-kst-size3}(iv) holds for $X$, in which case $G/K$ is cyclic and  $4n-5=|nX|=|G/K|-1$. Thus $3\leq n=\frac14|G/K|+1\leq \frac14 \exp(G)+1\leq \exp(G)-2$, contrary to assumption.
As this exhausts the last possibility for $X$, the proof is now complete.
\end{proof}

When $|A|$ is large, the previous corollary simplifies drastically.

\begin{corollary}\label{cor2}
Let $G$ be a finite abelian group,  let $A\subseteq G$ be a nonempty subset with $\la A\ra_*=G$, let $n\geq 1$ be an integer,  let $K=\mathsf H(nA)$ and suppose $n|A|>|G|.$
\begin{itemize}
\item[1.] If $n\geq \exp(G)$, then $nA=G$.
\item[2.] If $n=\exp(G)-1$ and $nA\neq G$, then $\exp(G)$ is composite, $G=H_0\oplus H_1\oplus\ldots \oplus H_r$ with   $K<H_0$ proper, $r\geq 1$ and $H_i=\la x_i\ra\cong C_{\exp(G)}$ for all $i\in [1,r]$ (thus $G$ is non-cyclic),  $$z+A+K= \bigcup_{j=0}^{r}\big(K+\Sum{i=0}{j-1}H_i+\Sum{i=j+1}{r}x_i\big)\quad\mbox{ for some $z\in G$},$$ $|A|n\leq |G|-|H_0|+(\exp(G)-1)|K|\leq \frac{p\exp(G)^r+\exp(G)-p-1}{p\exp(G)^r}|G|$, where $p$ is the smallest prime divisor of $\exp(H_0)$, and $|nA|=|G|-|H_0|+|K|$.
\end{itemize}
\end{corollary}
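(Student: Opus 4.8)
The plan is to read Corollary~\ref{cor2} off Corollary~\ref{cor1}, by testing the hypothesis $n|A|>|G|$ against each structural possibility allowed there, after first disposing of the small values $n\le 2$, which lie below the range $n\ge 3$ of Corollary~\ref{cor1}. For the reduction: if $G$ is trivial then $nA=G$ and there is nothing to prove, so assume $G$ nontrivial; since $|A|\le|G|$, the hypothesis $n|A|>|G|$ forces $n\ge 2$. If $n=2$, then $2|A|\ge|G|+1$ and the Pigeonhole Bound (Theorem~\ref{PigeonHoleBound}) gives $2A=G$, hence $nA=G$; in part~1 this is the asserted conclusion (note $n\ge\exp(G)$ with $n=2$ forces $\exp(G)\le2$), and in part~2 it contradicts the standing hypothesis $nA\ne G$ (note $n=\exp(G)-1=2$ forces $\exp(G)=3$). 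So I may assume $n\ge 3$. Then $(|A|+1)n-3=n|A|+n-3>|G|$, so $\min\{|G|,(|A|+1)n-3\}=|G|$; hence whenever $nA\ne G$ we have $|nA|<|G|=\min\{|G|,(|A|+1)n-3\}$, and since $n\ge\exp(G)\ge\exp(G)-1$ (part~1) or $n=\exp(G)-1$ (part~2), Corollary~\ref{cor1} applies and forces one of its items 1--4.

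Next I would eliminate the incompatible items. Items 1, 2 and 3 of Corollary~\ref{cor1} each carry a bound forcing $|A|n\le|G|$: item~1 has $|A|n=21<25=|G|$; item~2(a) has $|A|n\le\frac{15}{16}|G|$; item~2(b) has $|A|n=\frac{3}{4}|G|+3<|G|$, since there $|G|=4(n-1)\ge 32$; item~3 has $|A|n\le|G|$. Each of these contradicts $n|A|>|G|$. In part~1, where $n\ge\exp(G)$, item~4 is in addition impossible (it requires $n=\exp(G)-1<\exp(G)\le n$), so no item can hold, and this contradiction proves $nA=G$. In part~2, where $n=\exp(G)-1\ge 3$ (so $\exp(G)\ge 4$), items 1--3 are excluded by their $n$-values, so item~4 holds; among its sub-cases, 4(a) gives $|A|n\le\frac{\exp(G)-1}{\exp(G)}|G|<|G|$, 4(b) gives $|A|n\le|G|-2|K|<|G|$, and 4(c) gives $|A|n\le|G|<|A|n$, all contradictions. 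Hence 4(d) must hold, yielding exactly the decomposition $G=H_0\oplus H_1\oplus\ldots\oplus H_r$ with $K<H_0$ proper, $r\ge 1$, $H_i=\langle x_i\rangle\cong C_{\exp(G)}$ for $i\in[1,r]$, the coset description of $A+K$, the bound $|A|n\le|G|-|H_0|+(\exp(G)-1)|K|\le\frac{p\exp(G)^r+\exp(G)-p-1}{p\exp(G)^r}|G|$, and $|nA|=|G|-|H_0|+|K|$.

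It remains to derive that $\exp(G)$ is composite and that $G$ is non-cyclic. Combining $|G|<n|A|$ with the displayed upper bound gives $p\exp(G)^r<p\exp(G)^r+\exp(G)-p-1$, i.e.\ $\exp(G)\ge p+2$, where $p$ is the least prime dividing $\exp(H_0)$; since $H_0\le G$ gives $p\mid\exp(H_0)\mid\exp(G)$ and $p<\exp(G)$, the integer $\exp(G)$ is composite. Moreover $|G|=|H_0|\cdot\exp(G)^r\ge 2|K|\cdot\exp(G)^r\ge 2\exp(G)>\exp(G)$ (using $K<H_0$ proper, so $|H_0|\ge 2|K|\ge 2$, together with $r\ge 1$ and $\exp(G)\ge 2$), so $G$ cannot be cyclic of order $\exp(G)$, hence is non-cyclic. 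I do not expect a genuine obstacle here: all the mathematical content lives in Corollary~\ref{cor1}, and the remaining work is the bookkeeping above together with the degenerate small-$n$ cases that sit outside its hypotheses; the only mildly delicate point is reading ``$\exp(G)$ composite'' off the numerical bound in item~4(d).
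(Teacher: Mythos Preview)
Your proof is correct and follows essentially the same route as the paper: reduce to $n\ge 3$ via the Pigeonhole Bound, invoke Corollary~\ref{cor1}, and eliminate every item except 4(d) using the hypothesis $n|A|>|G|$. Your derivation of ``$\exp(G)$ composite'' from the numerical inequality (showing $p<\exp(G)$ with $p\mid\exp(G)$) is slightly different in form from the paper's (which observes that $\exp(G)$ prime collapses the bound to $\frac{p\exp(G)^r-1}{p\exp(G)^r}|G|<|G|$), but the content is the same; you also spell out the non-cyclicity argument, which the paper leaves implicit.
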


\begin{proof}
Since $n|A|>|G|$, we must have $n\geq 2$. If $n=2$, then Theorem \ref{PigeonHoleBound} and $2|A|=n|A|>|G|$ implies $nA=G$, either as desired or contrary to hypothesis. Therefore we can assume $n\geq 3$. We may assume $nA\neq G$, as there is nothing to prove otherwise, in which case $|nA|<|G|<n|A|\leq n|A|+n-3$, allowing us to apply Corollary \ref{cor1}. We observe that $|nA|\geq |A|n\geq |G|$ for all possibilities with $n\geq \exp(G)+1$. If $n=\exp(G)$, all possibilities from Corollary \ref{cor1} have $|A|n\leq |G|$, contrary to hypothesis. This establishes Item 1. Next suppose that $n=\exp(G)-1$. Then the hypothesis $|A|n>|G|$ means that Corollary \ref{cor1}.4(d) must hold with $\exp(G)$ composite, else $|A|n\leq\frac{p\exp(G)^r+\exp(G)-p-1}{p\exp(G)^r}|G|=
\frac{p\exp(G)^r-1}{p\exp(G)^r}|G|<|G|$, and now Item 2 follows.
\end{proof}

\section{Subsequence Sums}\label{sec-subsums}

In this section, we provide the proofs for Theorems \ref{thm-main-olson} and \ref{thm-main-nsums}. We begin with a lemma that can be combined with the Partition Theorem to show that only one of two extremes is possible for the subgroup $\la X\ra_*$ (where $X$ is as defined in Theorem \ref{thm-partition-thm}).

\begin{lemma}\label{lem-partition-extra}
Let $G$ be an abelian group, let $n\geq 1$, let $S\in \Fc(G)$ be a sequence, let $S'\mid S$ be a subsequence  with $\mathsf h(S')\leq n\leq |S'|$, let $H=\mathsf H(\Sigma_n(S))$,   let $X\subseteq G/H$ be the subset of all $x\in G/H$ for which $x$ has multiplicity at least $n$ in $\phi_H(S)$, and let $Z=\phi_H^{-1}(X)$. Suppose $|\Sigma_n(S)|<|S'|-n+1$. Then either $$\la Z\ra_*=H\quad\mbox{ or }\quad\la Z\ra_*=\la \supp(S)\ra_*.$$
\end{lemma}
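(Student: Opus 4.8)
The plan is to exploit the Partition Theorem together with its companion Subsum Kneser's Theorem. Since $|\Sigma_n(S)| < |S'| - n + 1$, alternative~1 of the Partition Theorem (Theorem~\ref{thm-partition-thm}) fails, so alternative~2 must hold: there is a setpartition $\sA = A_1 \bdot \ldots \bdot A_n$ with $\mathsf S(\sA) \mid S$, $|\mathsf S(\sA)| = |S'|$, $\Sigma_n(S) = \Sum{i=1}{n}A_i$, the stabilizer $H = \mathsf H(\Sigma_n(S))$ nontrivial, $|A_j \setminus Z| \leq 1$ for all $j$, and $\supp(\mathsf S(\sA)^{[-1]}\bdot S) \subseteq Z \subseteq A_j + H$ for all $j \in [1,n]$. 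In particular every $A_j$ meets every $H$-coset contained in $Z$, so $\phi_H(A_j) \supseteq X$ for all $j$; and the terms of $S$ outside $\mathsf S(\sA)$ all lie in $Z$. First I would record the trivial containment $\la Z \ra_* \leq \la \supp(S) \ra_*$ (since $Z$ meets $\supp(S)$, as $X$ is nonempty whenever $|S'| \geq n$, and every coset of $Z$ contains a term of $S$), and note that $H \leq \la Z \ra_*$ because $Z$ is a nonempty union of $H$-cosets with at least one such coset, hence $\la Z \ra_* = \la Z - Z \ra \supseteq H$. So the two candidate subgroups are the two "extremes" $H$ and $\la \supp(S)\ra_*$ sandwiching $\la Z\ra_*$, and the claim is that no intermediate value occurs.

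The core of the argument is a dichotomy on whether $\supp(S) \subseteq Z$ or not. If $\supp(S) \subseteq Z$, then $\la \supp(S)\ra_* = \la Z\ra_*$ and we are done (second alternative). So assume some term $g$ of $S$ lies outside $Z$. Since all terms outside $\mathsf S(\sA)$ lie in $Z$, this $g$ is a term of some $A_j$; and since $|A_j \setminus Z| \leq 1$, it is the \emph{unique} element of $A_j$ outside $Z$, and moreover each $A_i$ ($i \in [1,n]$) contains at most one element outside $Z$. The plan is then to translate so that $0 \in Z$ (replace each $A_i$ by $-a_i + A_i$ for a fixed $a_i \in A_i \cap Z$ in the same $H$-coset; this does not change $\la Z\ra_*$ or the relevant structure) and show $\la Z\ra_* = H$. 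The key geometric fact to extract: because $\phi_H(A_i) \supseteq X$ for all $i$ and each $\phi_H(A_i)$ exceeds $X$ by at most one element, the sumset $\phi_H(\Sum{i=1}{n}A_i) = \Sum{i=1}{n}\phi_H(A_i)$ has stabilizer $\mathsf H(\Sigma_n(S)/H)$ trivial (which is automatic since we quotiented by $H$), and one compares $|\Sigma_n(S)|$ against the Subsum Kneser bound. Here is where I would push: the assumption $|\Sigma_n(S)| < |S'| - n + 1$ combined with the Subsum Kneser lower bound $|\Sigma_n(S)| \geq |S'| - (n-1)|H| + e(|H|-1) + \rho$ forces $|H| \geq 2$ to "do work," and pins down $e$ and $\rho$ tightly; in particular it forces $X$ to be large relative to $G/H$ — specifically $|X| \cdot |H| \cdot n$ is close to $|S'|$ — so that $Z = \phi_H^{-1}(X)$ already has $\la Z\ra_* \supseteq$ a large subgroup. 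The cleanest route is: $Z$ is $H$-periodic and $\phi_H(Z) = X$; if $\la X \ra_* = G/H$ then $\la Z\ra_* = G \supseteq \la\supp(S)\ra_*$, giving the second alternative again; if $\la X\ra_* \neq G/H$ then I claim $X$ must be a single coset of a subgroup, forcing (after the normalization $0 \in X$) that $\la Z\ra_* = H$.

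The main obstacle I anticipate is exactly this last step: ruling out the possibility that $\la Z\ra_*$ is a proper intermediate subgroup strictly between $H$ and $\la\supp(S)\ra_*$. The mechanism should be: pass to $G/H$, where $\phi_H(\Sigma_n(S))$ is aperiodic and equals $\Sum{i=1}{n}\phi_H(A_i)$ with every $\phi_H(A_i) \supseteq X$, and apply Kneser's Theorem in $G/H$ to the sumset $\Sum{i=1}{n}\phi_H(A_i)$; the smallness hypothesis on $|\Sigma_n(S)|$ translates into $|\Sum{i=1}{n}\phi_H(A_i)| < (\text{something})$, and since each summand contains the common set $X$ plus at most one extra point, a Kneser/Cauchy–Davenport-type lower bound shows $\Sum{i=1}{n}\phi_H(A_i)$ is already "saturated" on $\la X\ra_*$, i.e. it is $\la X\ra_*$-periodic — but it is aperiodic, so $\la X\ra_* = G/H$ is the only nontrivial option, forcing $\la Z\ra_* = G = \la \supp(S)\ra_*$; and the degenerate option $\la X\ra_*$ trivial gives $X$ a single point, hence $Z$ a single $H$-coset through $0$, hence $\la Z\ra_* = H$. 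I would need to handle carefully the bookkeeping of the "at most one extra point per $A_i$" when invoking Kneser, since that extra point could a priori generate more of the group; the hypothesis $\supp(\mathsf S(\sA)^{[-1]}\bdot S) \subseteq Z$ and a counting of how many of the $A_i$ can have an extra point (bounded by $e$, which the Subsum Kneser comparison forces to be small) should control this.
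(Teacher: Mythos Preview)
Your overall framework matches the paper's: invoke alternative~2 of the Partition Theorem, pass to $G/H$, note $H\leq \la Z\ra_*\leq \la\supp(S)\ra_*$, and use Kneser-type bounds. You also correctly identify the crux as ruling out an intermediate value $L:=\la Z\ra_*$. However, the mechanism you propose for that step has a genuine gap.

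First, your case analysis is set up on the wrong dichotomy. You split on whether $\supp(S)\subseteq Z$, but a term $g\in\supp(S)\setminus Z$ can perfectly well lie in $L$ (since $Z\subsetneq L$ in general). In that situation you should still land in the second alternative $L=\la\supp(S)\ra_*$, not the first; so ``$\supp(S)\not\subseteq Z$ hence show $L=H$'' is aiming at the wrong conclusion. The paper's dichotomy is on whether $\supp(S)\subseteq L$, which is the right one.

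Second, your ``saturation'' argument is not coherent. You assert that a Kneser bound forces $\Sum{i=1}{n}\phi_H(A_i)$ to be $\la X\ra_*$-periodic, and then combine this with aperiodicity to conclude $\la X\ra_*=G/H$. But $K$-periodic plus aperiodic forces $K$ trivial, not $K=G/H$; and in any case there is no reason for the sumset to be $\la X\ra_*$-periodic (take $X=\{0,1\}$ in a large cyclic group). Moreover, the conclusion you want is $\la Z\ra_*=\la\supp(S)\ra_*$, not $\la Z\ra_*=G$; nothing in the hypotheses forces $\la\supp(S)\ra_*=G$, so your claimed trichotomy ``$\la X\ra_*$ is trivial or all of $G/H$'' is too strong and is false in general.

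The missing idea is a \emph{doubling} step. Assume $N=|X|\geq 2$ (else $L=H$ and we are done) and there exists a term of $S$ outside $L$. After re-indexing so that $\phi_H(A_i)=X$ for $i\in[1,k]$ with $k=n-e\geq 2$, some $\phi_H(A_{k+1})$ consists of $X$ together with a single extra point \emph{outside} $L/H$. Since $\Sum{i=1}{k}\phi_H(A_i)=kX\subseteq L/H$ and the extra point lies outside $L/H$, adjoining $\phi_H(A_{k+1})$ produces two disjoint translates of $kX$, so
\[
\Big|\Sum{i=1}{k+1}\phi_H(A_i)\Big|\;\geq\;2|kX|\;\geq\;2\big(kN-(k-1)\big),
\]
the inner bound by Kneser (as $kX$ is aperiodic). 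One more application of Kneser to the remaining $n-k-1$ summands (each of size $N+1$) gives $|\Sum{i=1}{n}\phi_H(A_i)|\geq (n+k-1)N-2(k-1)$, hence $|\Sigma_n(S)|\geq \big((n+k-1)N-2(k-1)\big)|H|$. But $|\Sigma_n(S)|\leq |S'|-n\leq nN|H|+e-n=nN|H|-k$, so $(k-1)(N-2)|H|\leq -k$, which is impossible for $k\geq 2$ and $N\geq 2$. This is the contradiction the paper obtains; your plan does not contain this step or any substitute for it.
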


\begin{proof}
By translating the terms of $S$ appropriately, we can w.l.o.g. assume $0\in \supp(S)\cap Z$. Note $X\neq \emptyset$, and thus also $Z\neq \emptyset$, in view of $|\Sigma_n(S)|<|S'|-n+1$ (as remarked immediately after Theorem \ref{thm-partition-thm}).  Let $L=\la Z\ra_*=\la Z\ra$.  Since $|\Sigma_n(S)|<|S'|-n+1$, we can apply Theorem \ref{thm-partition-thm}.2 to $\Sigma_n(S)$  and let $\sA=A_1\bdot\ldots\bdot A_n$ be the resulting setpartition. Then $\Sigma_n(S)=\Sum{i=1}{n}A_i$, \ $H$ is nontrivial, $Z\neq \emptyset$, and \be\label{bound}|S'|-n\geq |\Sum{i=1}{n}A_i|\geq |S'|-n+1-(n-e-1)(|H|-1)+\rho,\ee with $e$ and $\rho\geq 0$ as defined in Theorem \ref{thm-partition-thm}. Note \eqref{bound} implies that $e\leq n-2$. Since $H=\mathsf H(\Sigma_n(S))$, we have $H\leq \la \supp(S)\ra_*$, while $H\leq \la Z\ra_*=L$ follows by definition of $Z$. Thus to prove $L=\la Z\ra_*=\la \supp(S)\ra_*$,  it suffices to show $\la \phi_H(\supp(S))\ra_*=\la \phi_H(Z)\ra_*=\la X\ra_*=L/H$. Since $\phi_H(Z)\subseteq \phi_H(\supp(S))$, the inclusion $L/H\leq
\la \phi_H(\supp(S))\ra_*$ is trivial. Assuming by contradiction that  the reverse inclusion is false, then  there must some $x\in \supp(S)\setminus L$. Re-index the $A_i$ so that $\phi_H(A_i)=X\subseteq L/H$ for $i=1,\ldots,k$ and $A_{k+1}\nsubseteq L$, where $k=n-e\geq 2$.

Let $N=|X|$. We may assume $L/H=\la X\ra_*$ is nontrivial and $N\geq 2$, else $H=L$ follows, yielding the other desired conclusion. Note $$|\phi_H(A_i)|=N\;\mbox{for $i\in [1,k]$}\quad\und\quad |\phi_H(A_i)|=N+1\;\mbox{ for $i\in [k+1,n]$}$$ in view of Theorem \ref{thm-partition-thm}.2 and our choice of indexing.
Since $H=\mathsf H(\Sigma_n(S))=\mathsf H(\Sum{i=1}{n}A_i)$, it follows that $\Sum{i=1}{n}\phi_H(A_i)$ is aperiodic. In particular, $kX$ is aperiodic, whence Kneser's Theorem implies that $|\Sum{i=1}{k}\phi_H(A_i)|=|kX|\geq kN-k+1$. Since $0\in X\subseteq \phi_H(A_{k+1})\nsubseteq L/H$ and $\Sum{i=1}{k}A_i\subseteq L$, we have $|\Sum{i=1}{k+1}\phi_H(A_i)|\geq 2|\Sum{i=1}{k}\phi_H(A_i)|\geq 2kN-2(k-1)$. Since $\Sum{i=1}{n}\phi_H(A_i)$ is aperiodic, Kneser's Theorem implies \begin{align*}\nn|\Sum{i=1}{n}\phi_H(A_i)|\geq&\; |\Sum{i=1}{k+1}\phi_H(A_i)|+\Sum{i=k+2}{n}|\phi_H(A_i)|-(n-k)+1\\\nn \geq& \; 2kN-2(k-1)+(n-k-1)(N+1)-n+k+1\\\nn =&\;
(n+k-1)N-2(k-1).
\end{align*}
Thus, since $H=\mathsf H(\Sum{i=1}{n}A_i)$, it follows that $|\Sum{i=1}{n}A_i|\geq (n+k-1)N|H|-2(k-1)|H|$. By hypothesis, $|\Sum{i=1}{n}A_i|=|\Sigma_n(S)|\leq |S'|-n\leq nN|H|+e-n=nN|H|-k$. Combining this with the previous estimate, we obtain $nN|H|-k\geq (n+k-1)N|H|-2(k-1)|H|$, implying $-k\geq (k-1)N|H|-2(k-1)|H|\geq 0$, where the final inequality makes use of $N\geq 2$. But since $k=n-e\geq 2$, this is a contradiction, completing the proof.
\end{proof}

\begin{proof}[Proof of Theorem \ref{thm-main-nsums}]
Let $H=\mathsf H(\Sigma_n(S))$, let $X\subseteq G/H$ be the subset of all $x\in G/H$ for which $x$ has multiplicity at least $n$ in $\phi_H(S)$, and let $Z=\phi_H^{-1}(X)\subseteq G$.
Apply Theorem \ref{thm-partition-thm} to $\Sigma_n(S)$ using $S'=S$ and let $\sA=A_1\bdot\ldots\bdot A_n$ be the resulting setpartition. If $|\Sum{i=1}{n}A_i|\geq \Sum{i=1}{n}|A_i|-n+1=|S|-n+1\geq |G|$, then $\Sigma_n(S)=G$ follows from Theorem \ref{thm-partition-thm}. Therefore we can assume Theorem \ref{thm-partition-thm}.2 holds.
Thus, letting $N=|X|$ and $e=\Sum{i=1}{n}|A_i\setminus Z|$, it follows that \be\label{gogag} (|S|-n+1)-(n-e-1)(|H|-1)\leq ((N-1)n+e+1)|H|\leq |\Sigma_n(S)|=|\Sum{i=1}{n}A_i|\leq |G|-|H|,\ee else the desired conclusion $\Sigma_n(S)=G$ follows. In particular, $e\leq n-2$ in view of $|S|-n+1\geq |G|$, and $H<G$ is a nontrivial subgroup. We also must have $N\geq 1$, else $e=|S|$ follows, in which case  \eqref{gogag} implies  $|\Sigma_n(S)|\geq (|S|-n+1)|H|\geq |G|$, contrary to assumption.
Thus $X$ is nonempty. If $N=1$, then \eqref{gogag} implies that $e\leq |G/H|-2$, contrary to the coset condition hypothesis. Therefore we must have $N=|X|\geq 2$. By translating, we can w.l.o.g. assume $0\in Z\cap \supp(S)$. By re-indexing the $A_i$, we can also assume $$\phi_H(A_i)=X\quad\mbox{ for $i=1,\ldots,k$},$$ where $k=n-e\geq 2$.

We must have $\la \supp(S)\ra_*=G$, for if $L=\la \supp(S)\ra_*<G$ is a proper subgroup, then all but $0$ terms of $S$ are from the subgroup $L$ with $0\leq |G/L|-2$, contrary to hypothesis. Consequently, if $\la Z\ra_*<G=\la \supp(S)\ra_*$ is proper, then, since $|X|=|\phi_H(Z)|\geq 2$, Lemma \ref{lem-partition-extra} implies that $|\Sigma_n(S)|\geq |S|-n+1\geq |G|$, contrary to hypothesis. Therefore we  instead conclude that  \be \label{gob1}\la Z\ra_*=G\quad\und\quad\la X\ra_*=G/H.\ee

Assume by contradiction that  $\Sigma_n(S)\neq G$. Then, in view of  $H=\mathsf H(\Sum{i=1}{n}A_i)$, we have  \be\label{nXnotGH} nX\neq G/H,\ee for otherwise $G/H= nX\subseteq \Sum{i=1}{n}\phi_H(A_i)$, implying $G=\Sum{i=1}{n}A_i=\Sigma_n(S)$, contrary to assumption.  Since $e\leq n-2$, we have $|Z|n\geq |S|-e\geq (n+|G|-1)-(n-2)>|G|$. Thus, since $|X|\geq |Z|/|H|$, we conclude that \be\label{gob2}|X|n>|G/H|.\ee

If $n=1$, then $|S|\geq |G|+n-1$ and $\mathsf h(S)\leq n$ imply $\supp(S)=G$, whence $\Sigma_n(S)=G$ follows, contrary to assumption. If $n=2$, then $|S|\geq |G|+n-1\geq |G|+1$ and $\mathsf h(S)\leq 2$. Applying Theorem \ref{PigeonHoleBound} to $A_1+A_2$ yields $|\Sigma_n(S)|=|\Sum{i=1}{n}A_i|=|A_1+A_2|=|G|$, contrary to assumption. Therefore we must have $n\geq 3$.

If $n\geq \exp(G)\geq \exp(G/H)$, we can apply Corollary \ref{cor2}.1 to $nX$ (in view of \eqref{gob1} and \eqref{gob2}) to obtain $nX=G/H$, contradicting \eqref{nXnotGH}. Thus Item 1 is complete.

If $n\geq \exp(G)-1\geq \exp(G/H)-1$, we can apply Corollary \ref{cor2} to $nX$ to conclude $\exp(G/H)=\exp(G)$ is composite and $G/H$ is non-cyclic. This completes Item 2 when $\exp(G)$ is prime. Moreover, if $G\cong H'\oplus C_{\exp(G)}$ with $|H'|$ prime, then $\exp(G/H)=\exp(G)$ is only possible if $G\cong H\oplus  C_{\exp(G)}$ with $G/H\cong C_{\exp(G)}$  cyclic, contrary to assumption. Thus Item 2 is complete in all cases.

If $G$ is cyclic and $n\geq \frac1p|G|-1$, where $p$ is the smallest prime divisor of $|G|$, then $n\geq |G/H|-1=\exp(G/H)-1$ follows in view of $H$ being nontrivial. Then, since $G/H$ is cyclic, Corollary \ref{cor2} implies that $nX=G/H$, contrary to hypothesis.

If $\exp(G)\leq 3$, then $n\geq 3\geq \exp(G)$, in which case Item 1 implies that Item 4 holds. If $|G|<10$, then $|G/H|\leq 4$ (since $H$ is nontrivial). Thus \eqref{gogag} yields the contradiction $|G|>|\Sum{i=1}{n}A_i|\geq ((N-1)n+1)|H|\geq (n+1)|H|\geq 4|H|\geq |G|$.
\end{proof}

Next, we give the proof of Theorem \ref{thm-main-olson}, which follows rather quickly using Theorem \ref{thm-main-nsums}.

\begin{proof}[Proof of Theorem \ref{thm-main-olson}]
Let $n=|S|-|G|$. Then $\Sigma_{|G|}(S)=\sigma(S)-\Sigma_n(S)$ and $|S|=n+|G|$. Thus Theorem \ref{thm-main-olson} follows immediately from Theorem \ref{thm-main-nsums} except when $\exp(G)=4$ and $|G|=16$, or when $|G|=10$.  Assume by contradiction that $\Sigma_{|G|}(S)\neq G$, and thus $\Sigma_n(S)\neq G$ as well.
If $n=2$, then $\mathsf h(S)\leq n=2$ and $|S|\geq |G|+n=|G|+2$ imply $\Sigma_n(S)=G$ via Theorem \ref{PigeonHoleBound} applied to the sets from any set partition $\mathcal A=A_1\bdot A_2$ with $\mathsf S(\mathcal A)=S$. Therefore we must have $n\geq 3$. Assume by contradiction that $|\Sigma_n(S)|<|G|< |S|-n+1$.
We proceed as in the proof of Theorem \ref{thm-main-nsums}, including all notation used there, e.g., $H$, $X\subseteq G/H$, $Z$, $\A=A_1\bdot\ldots\bdot A_n$, $N$ and $e$. In particular, we again conclude that $H<G$ is proper and nontrivial,  $nX\neq G/H$, \ $e\leq n-2$, $|X|=N\geq 2$,  $\la X\ra_*=G/H$ and $|X|n>|G/H|$, allowing us to apply Corollary \ref{cor2} to $nX$. By Theorem \ref{thm-partition-thm} applied to $\Sigma_n(S)$ with $S=S'$, we have \be\label{grabto}  ((N-1)n+e+1)|H|\leq |\Sigma_n(S)|\leq |G|-|H|.\ee

If $|G|=10$, then $G$ is cyclic and $\exp(G/H)=|G/H|\in \{2,5\}$. If $\exp(G/H)=2<n$, then Corollary \ref{cor2}.1 implies $nX=G/H$, contrary to assumption. Therefore $|H|=2$ and $\exp(G/H)=5$ is prime, whence $G/H$ is cyclic. Thus, if $n\geq 4=\exp(G/H)-1$, then Corollary \ref{cor2} again gives the contradiction $nX=G/H$. Therefore we must have $n=3$. If $N\geq 3$, then Theorem \ref{PigeonHoleBound} implies that $2X=G/H$, contradicting that $nX=3X\neq G/H$. Therefore $N=|X|=2$, which combined with $e\leq n-2=1$ ensures that $13=|G|+n=|S|\leq n|H|N+e\leq 13$, forcing equality to hold in all estimates. In particular, $e=1$. But then \eqref{grabto} yields the contradiction $10=2(e+4)=(n+e+1)|H|\leq 8$.

If  $\exp(G)=4$ and $|G|=16$, then Corollary \ref{cor2} yields the contradiction $nX=G/H$ unless $n=3=\exp(G/H)-1$ with $G/H$ non-cyclic, which is only possible if $|H|=2$ and $G/H\cong C_2\times C_4$.
 In this case, Corollary \ref{cor2}.2 instead implies
 $3|X|=|X|n\leq \frac{2\exp(G/H)+\exp(G/H)-2-1}{2\exp(G/H)}|G/H|=9$. Hence $|X|\leq 3$, \ $e\leq n-2=1$  and $19=|G|+n=|S|\leq |X||H|n+e\leq 18+1=19$. We therefore conclude that equality must hold in all estimates, in which case $e=1$ and  $N=|X|=3$. But then \eqref{grabto} yields the contradiction $16=2(e+7)=(2n+e+1)|H|\leq 14$.
\end{proof}

\subsection*{Acknowledgements} I thank the referees for the  suggestions that helped improve the presentation of the paper and main results.

\end{document}